\theoremstyle{plain} 
\newtheorem{theorem}{Theorem}[section] 
\newtheorem{lemma}[theorem]{Lemma}
\newtheorem*{conjecture}{Conjecture} 
\newtheorem*{question}{Question} 
\theoremstyle{definition}
\newtheorem{definition}{Definition} 
\theoremstyle{remark} 
\newtheorem{remark}[theorem]{Remark} 
\newtheorem{claim}[theorem]{Claim} 
\newtheorem{convention}[theorem]{Convention}
\newtheorem*{acknowledgments}{Acknowledgements} 
\numberwithin{equation}{section}
\numberwithin{figure}{section}
\newcommand{\bd}{\begin{description}}   
\newcommand{\ed}{\end{description}} 
\newcommand{\ba}{\begin{array}}      \newcommand{\ea}{\end{array}} 
\newcommand{\bc}{\begin{center}}     \newcommand{\ec}{\end{center}} 
\newcommand{\be}{\begin{enumerate}}  \newcommand{\ee}{\end{enumerate}} 
\newcommand{\beq}{\begin{eqnarray}}  \newcommand{\eeq}{\end{eqnarray}} 
\newcommand{\beQ}{\begin{eqnarray*}} \newcommand{\eeQ}{\end{eqnarray*}} 
\newcommand{\bi}{\begin{itemize}}    \newcommand{\ei}{\end{itemize}}
\newcommand{\ov}{\overline}
\newcommand{\s}{\sigma} 
\newcommand{\1}{\mathbf{1}}
\newcommand{\n}{ \{ 1,...,n \} }
\newcommand{\ks}{ \{ 1,...,k \} }
\newcommand\modZ {\mathbb{Z}}
\newcommand\modL {\mathcal{L}}
\newcommand{\nbpt}{18}
\newcommand{\figtotext}[3]{\begin{array}{c}\includegraphics{#3}\end{array}}
\newcommand{\double}{\figtotext{\nbpt}{\nbpt}{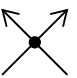}}
\newcommand{\Over}{\figtotext{\nbpt}{\nbpt}{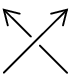}}
\newcommand{\under}{\figtotext{\nbpt}{\nbpt}{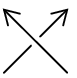}}
\newcommand{\si}{\sigma}
\begin{document} 
\title[Finite Type string link invariants of degree $<5$]{Characterization of Finite Type String Link Invariants of Degree $<5$} 

\author[J.B. Meilhan]{Jean-Baptiste Meilhan} 
\address{Institut Fourier, Universit\'e Grenoble 1 \\
         100 rue des Maths - BP 74\\
         38402 St Martin d'H\`eres , France}
	 \email{jean-baptiste.meilhan@ujf-grenoble.fr}
\author[A. Yasuhara]{Akira Yasuhara} 
\address{Tokyo Gakugei University\\
         Department of Mathematics\\
         Koganeishi \\
         Tokyo 184-8501, Japan}
	 \email{yasuhara@u-gakugei.ac.jp}

\thanks{
The first author is supported by a grant from the Heiwa Nakajima Foundation.  
The second author is partially supported by a Grant-in-Aid for Scientific Research (C) 
($\#$20540065) of the Japan Society for the Promotion of Science.}
\subjclass[2000]{57M25, 57M27}
%
%
\maketitle
%

\begin{abstract} 
In this paper, we give a complete set of finite type string link invariants of degree $<5$.  
In addition to Milnor invariants, these include several string link invariants constructed by evaluating knot invariants on certain closure of (cabled) string links.  
We show that finite type invariants classify string links up to $C_k$-moves for $k\le 5$, which proves, at low degree, a conjecture due to Goussarov and Habiro.  
We also give a similar characterization of finite type concordance invariants of degree $<6$.  
\end{abstract} 
\section{Introduction}
The notion of Goussarov-Vassiliev finite type link invariants provides a unifying viewpoint on the various quantum link invariants \cite{BNv,Gusarov:91,Gusarov:94,Vassiliev}.
Denote by $\modZ \modL (m)$ the free abelian group generated by the set $\modL(m)$ of isotopy classes of $m$--component oriented, ordered links in $S^3$.  An abelian group-valued link invariant is a finite type invariant of degree $k$ if its linear extension to $\modZ \modL (m)$ vanishes on the $(k+1)$th term of the descending filtration
\begin{equation}
  \label{e7}
  \modZ \modL (m)=J_0(m)\supset J_1(m)\supset \cdots
\end{equation}
where each $J_n(m)$ is generated by certain linear combinations of links associated with singular links with $n$ double points.  See Subsection \ref{fti} for a definition.  

It is a natural question to ask for a topological characterization of finite type invariants.  
Habiro \cite{H} and Goussarov \cite{G} introduced independently the notion of $C_k$-move to address this question.  
A $C_k$-move is a local move on (string) links as illustrated in Figure \ref{cnm}, which can be regarded as a kind of `higher order crossing change' (in particular, a $C_1$-move is a crossing change).   
\begin{figure}[!h]
\includegraphics{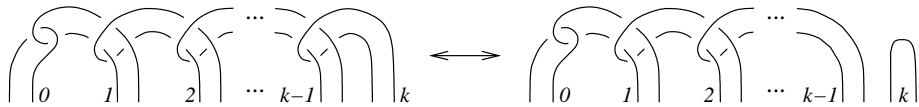}
\caption{A $C_k$-move involves $k+1$ strands of a link, labelled here by integers 
between $0$ and $k$.   } \label{cnm}
\end{figure}

The $C_k$-move generates an equivalence relation on links, called \emph{$C_k$-equivalence}, which becomes finer as $k$ increases.  
This notion can also be defined by using the theory of claspers (see Section \ref{clasp}). 
Goussarov and Habiro showed independently the following.  
\begin{theorem}[\cite{G,H}]\label{cnknots}
Two knots cannot be distinguished by any finite type invariant of order less than $k$ if and only if they are $C_k$-equivalent. 
\end{theorem}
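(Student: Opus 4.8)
The plan is to work entirely in the language of claspers recalled in Section~\ref{clasp}, using the fact that a $C_k$-move is the same operation as surgery along a connected tree clasper of degree $k$ (a ``$C_k$-tree''), so that $C_k$-equivalence is the equivalence relation generated by surgeries along tree claspers of degree $\ge k$. For two knots $K,K'$ I would establish the equivalence of three conditions: (A) $K$ and $K'$ are $C_k$-equivalent; (B) $[K]-[K']\in J_k$; and (C) $K$ and $K'$ are not separated by any invariant of degree $<k$. The implication (B)$\Rightarrow$(C) is immediate, since an invariant of degree $d\le k-1$ vanishes on $J_{d+1}\supseteq J_k$ by the definition of the filtration \eqref{e7}. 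Hence it suffices to prove (A)$\Rightarrow$(B) and the converse implication (C)$\Rightarrow$(A); together with (B)$\Rightarrow$(C) these close the cycle and give (A)$\Leftrightarrow$(C), which is the assertion.

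For (A)$\Rightarrow$(B) I would prove the key filtration lemma: if $K'=K_T$ is obtained from $K$ by surgery along a single degree-$k$ tree clasper $T$, then $[K]-[K_T]\in J_k$. This I would do by induction on $k$. The case $k=1$ is exactly a crossing change, so $[K]-[K_T]$ is the canonical resolution of a knot with one double point and lies in $J_1$ by definition. For the inductive step I would open one leaf of $T$ by the clasper surgery formula, expressing $[K]-[K_T]$ as a difference of two degree-$(k-1)$ clasper surgeries linked through one extra double point; this exhibits $[K]-[K_T]$ as the alternating sum attached to a singular knot with $k$ double points, modulo terms of the same shape, hence as an element of $J_k$. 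Since any $C_k$-equivalence is a finite sequence of such surgeries and $J_k$ is a subgroup, additivity gives (A)$\Rightarrow$(B).

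The implication (C)$\Rightarrow$(A) is the substantial one, and I would prove it by induction on $k$. The base case $k=1$ is trivial, since every knot is $C_1$-equivalent to every other and degree-$0$ invariants are constant. For the inductive step, assume the statement for $k$ and suppose $K,K'$ share all invariants of degree $\le k$; in particular they share those of degree $\le k-1$, so by the inductive hypothesis $K\sim_{C_k}K'$. Thus, modulo $C_{k+1}$, the knot $K'$ is obtained from $K$ by surgery along a disjoint union $T$ of degree-$k$ tree claspers (surgeries along claspers of degree $>k$ may be discarded, being themselves $C_{k+1}$-moves). Clasper calculus---the AS and IHX relations, together with the commutativity of disjoint surgeries modulo $C_{k+1}$---shows that the $C_{k+1}$-class of $K'$ depends only on the class $[T]$ of $T$ in the abelian group $\Gamma_k$ obtained from the space of degree-$k$ Jacobi diagrams by imposing AS and IHX, and that $K'\sim_{C_{k+1}}K$ \emph{precisely} when $[T]=0$. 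Each invariant $v$ of degree exactly $k$ with values in an abelian group $A$ induces a homomorphism $w_v\colon\Gamma_k\to A$, its symbol, and by the lemma of (A)$\Rightarrow$(B) one computes $v(K')-v(K)=\pm\,w_v([T])$, the lower-degree contributions cancelling modulo $J_k$. As $K$ and $K'$ share every degree-$\le k$ invariant, all these symbols annihilate $[T]$.

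The main obstacle is exactly the final completeness step: one must show that the symbols of degree-$k$ finite type invariants separate the points of $\Gamma_k$, i.e. $\bigcap_v\ker w_v=0$, for then $[T]=0$ and $K'\sim_{C_{k+1}}K$, closing the induction. Over $\modQ$ this follows from the existence of a universal finite type invariant (the Kontsevich integral), whose weight systems realize the full dual of the rationalized diagram space; the delicate point, which Goussarov and Habiro address \cite{G,H}, is the integral refinement, where torsion in $\Gamma_k$ must be detected by suitable $\modZ$- or finite-group-valued invariants. I expect the verification that AS and IHX generate the \emph{entire} kernel of the clasper-surgery map---so that $\Gamma_k$ is neither too large nor too small---to require the most care, since it is precisely what ties the topological $C_{k+1}$-classification to the algebra of the diagram space.
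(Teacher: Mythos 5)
This statement is quoted in the paper from Goussarov and Habiro (\cite{G,H}); the paper contains no proof of it, so there is nothing internal to compare your sketch against, and I assess it against the known arguments. Your skeleton is the right one: the three conditions (A) $C_k$-equivalence, (B) $[K]-[K']\in J_k$, (C) agreement of all invariants of order $<k$, the triviality of (B)$\Rightarrow$(C), and the filtration lemma ``surgery along a degree-$k$ tree clasper moves a knot by an element of $J_k$'' for (A)$\Rightarrow$(B) are all correct and standard. (Two small points: for knots the relevant diagram space is Jacobi diagrams modulo AS, IHX \emph{and} STU, not just AS and IHX; and your claim that $K'\stackrel{C_{k+1}}{\sim}K$ ``precisely'' when $[T]=0$ over-claims --- only the ``if'' direction follows from clasper calculus, and only that direction is used.)

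The genuine gap is the completeness step, and it is compounded by a structural choice that makes the problem harder than it needs to be. First, you never use the tautological invariant $K\mapsto [K] \bmod J_k$, which takes values in the abelian group $\modZ\mathcal{L}(1)/J_k$ and is by definition a finite type invariant of order $<k$; feeding it into hypothesis (C) gives (C)$\Rightarrow$(B) for free, so the entire theorem reduces to (B)$\Rightarrow$(A), i.e.\ to the injectivity of the map from the group $\mathcal{K}_k$ of $C_k$-trivial knots modulo $C_{k+1}$ into $J_k/J_{k+1}$. Second, your reduction instead asks that the symbols of all degree-$k$ invariants separate the points of $\Gamma_k$, i.e.\ that $\bigcap_v\ker w_v=0$ \emph{in the diagram space}; this is strictly stronger than what is needed (it would entail that AS, IHX, STU generate the whole kernel of the surgery map $\Gamma_k\to\mathcal{K}_k$, which is delicate and not required), and over $\modZ$ you then defer it entirely to \cite{G,H} --- which is to defer the whole content of the theorem, since the Kontsevich integral only handles the rational case while the statement concerns arbitrary abelian-group-valued invariants. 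The actual integral argument of Goussarov and Habiro goes the other way: one constructs an explicit inverse $J_k/J_{k+1}\to\mathcal{K}_k$ by realizing the alternating resolution of a singular knot with $k$ double points (each double point replaced by a degree-$1$ clasper) as surgery along a single degree-$k$ tree clasper modulo $J_{k+1}$ and $C_{k+1}$ --- Goussarov's variation / the zip construction. Without that step, or some substitute for it, your proposal establishes the easy implications but not the theorem.
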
  

It is known that the `if' part of the statement holds for links as well, but explicit examples show that the `only if' part of Theorem \ref{cnknots} does not hold for links in general, see \cite[\S 7.2]{H}.    

However, Theorem \ref{cnknots} may generalize to {\em string links}.  Recall that a string link is a proper tangle without closed component (see Subsection \ref{sl} for a precise definition).  
\begin{conjecture}[Goussarov-Habiro ; \cite{G,H}]
Two string links of the same number of components share all finite type invariant of order less than $k$ if and only if they are $C_k$-equivalent.  
\end{conjecture}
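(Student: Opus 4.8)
The plan is to prove the two implications separately, the forward (``if'') direction being shallow and the converse carrying the entire weight of the statement. For the forward direction I would pass to the language of claspers (Section \ref{clasp}): a $C_k$-move on a string link is surgery along a connected clasper of degree $k$, and the basic expansion of clasper surgery as an alternating sum of crossing changes shows that a single $C_k$-move alters a string link by an element of $J_k(m)$. Consequently any finite type invariant of degree $<k$, which by definition vanishes on $J_k(m)$, is unchanged under a $C_k$-move; since $C_k$-equivalence is generated by these moves, $C_k$-equivalent string links of the same number of components share all finite type invariants of degree $<k$.

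The converse is the substance, and I would argue by induction on $k$, the case $k=1$ being trivial. The key structural input from clasper theory is that the set of $C_k$-equivalence classes of $n$-component string links forms a group under the stacking product, filtered by $C_m$-equivalence for $m\ge k$. Denote by $\mathcal{A}_m(n)$ the graded quotient, that is, the group of string links $C_m$-equivalent to the trivial one taken modulo $C_{m+1}$-equivalence. A telescoping argument along this filtration reduces the conjecture to the following assertion on each graded piece: a string link that is trivial up to the $C_m$-stage and has trivial finite type invariants of degree $\le m$ is trivial in $\mathcal{A}_m(n)$; equivalently, the finite type invariants of degree $m$ separate $\mathcal{A}_m(n)$.

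To establish this I would compute both sides of the natural pairing between $\mathcal{A}_m(n)$ and degree-$m$ finite type invariants. On the topological side, clasper calculus presents $\mathcal{A}_m(n)$ as generated by the results of surgery on a single degree-$m$ tree clasper on the trivial string link, the relations being exactly the antisymmetry and Jacobi (IHX) identities realized by the clasper moves; this identifies $\mathcal{A}_m(n)$ with a quotient of the space of degree-$m$ Jacobi diagrams whose legs are colored by $\n$. On the dual side, the symbol of a degree-$m$ finite type invariant is a weight system, a functional on the same diagram space obeying the dual relations, by the structure of the associated graded of the filtration \eqref{e7}. The remaining task is to show the induced pairing between these two spaces is nondegenerate.

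Rationally this is within reach: the Kontsevich integral for string links is a universal rational finite type invariant realizing every rational weight system, and combined with the clasper presentation of $\mathcal{A}_m(n)\otimes\modQ$ it yields nondegeneracy over $\modQ$, hence the conjecture rationally. The genuine obstacle is the integral statement that the conjecture actually demands, since finite type invariants are allowed to take values in arbitrary abelian groups (for instance $\modZ/p$), so one must show they detect every \emph{torsion} class in $\mathcal{A}_m(n)$. No integral universal finite type invariant is known, so there is no formal shortcut through the Kontsevich integral; instead one would need an explicit determination of $\mathcal{A}_m(n)$ as a finitely generated abelian group, its torsion included, together with, for each generator, a finite type invariant of degree $m$ (possibly valued in a finite group) detecting it. Controlling this torsion uniformly in all degrees $m$ is precisely where a general argument breaks down, and is the reason one is, at present, forced to pin down $\mathcal{A}_m(n)$ only in a bounded range of degrees.
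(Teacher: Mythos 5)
You cannot have proved this statement, and the paper does not prove it either: it is stated as a \emph{conjecture}, and the paper's actual contribution is to establish it only for $k\le 5$ (Theorems \ref{c3}, \ref{c4} and \ref{c5}). Your proposal is honest about exactly this: you give a complete argument only for the easy ``if'' direction (clasper surgery of degree $k$ expands into the $k$th term of the Goussarov--Vassiliev filtration, so invariants of degree $<k$ cannot see it) and for the rational nondegeneracy of the pairing via the Kontsevich integral, and you correctly identify the integral/torsion problem in the graded groups $\mathcal{SL}_m(n)/C_{m+1}$ as the reason no general argument exists. That diagnosis matches the state of the art, and your reduction-to-graded-pieces strategy is precisely the paper's method in the accessible range: the authors use the AS, IHX and STU relations (Lemma \ref{asihxstu}), together with Lemmas \ref{flipping} and \ref{index1}, to cut a generating set of each $\mathcal{SL}_m(n)/C_{m+1}$ for $m\le 4$ down to an explicit finite list, and then exhibit integer-valued degree-$m$ invariants (Milnor invariants plus the closure invariants $f_2,f_3,V_3,f^i_4,V^i_4,W^i_4$) whose matrix of values on the generators has full rank (Tables 2, 4, 6). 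This full-rank computation is what discharges, in low degree, the torsion worry you raise: it shows $\mathcal{SL}(n)/C_k$ is torsion-free for $k\le 5$, which is also exactly why the paper can add the equivalence with the truncated Kontsevich integral (assertion (3) of each theorem), i.e.\ why rational invariants suffice there.

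One step of your sketch is stated too strongly and would be a genuine gap if leaned on: you assert that clasper calculus presents $\mathcal{SL}_m(n)/C_{m+1}$ as the space of degree-$m$ Jacobi diagrams with legs colored by $\n$, ``the relations being exactly'' AS and IHX. Integrally, only a \emph{surjection} from the diagram space onto this group is known; whether it is an isomorphism over $\modZ$ is essentially equivalent to the open part of the conjecture, so taking it as input would be circular. Note also the paper's caveat after Lemma \ref{asihxstu}: the clasper STU relation holds only among \emph{connected} claspers, unlike its diagrammatic counterpart, so the dictionary between clasper moves and diagram relations is not a formal transcription. With that assertion weakened to a surjection, your proposal is a correct account of the two directions that are actually provable and of why the remaining implication is open beyond the degrees treated here.
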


One nice property of string links, which suggests some analogy with knots, is that they admit a natural composition.  Indeed the stacking product $\cdot$ endows the set $\mathcal{SL}(n)$ of $n$-string links up to isotopy fixing the endpoints with a structure of monoid.  In particular, string links with $1$ component are exactly equivalent to knots, and their stacking product is equivalent to the connected sum $\sharp$ of knots.   
The Goussarov-Habiro Conjecture is also supported by the fact that there are much more finite type invariants for string links than for links. For example, Milnor invariants \cite{Milnor, Milnor2} are defined for both links and string links, but (except for the linking number) they are of finite type only for string links.  See Subsection \ref{milnor}.  

As in the link case, the `if' part of the conjecture is always true.  
The `only if' part is also true for $k=1$ (in which case the statement is vacuous) and $k=2$ ; the only finite type string link invariant of degree $1$ is the linking number, which is known to classify string links up to $C_2$-equivalence \cite{MN}.  (Note that this actually also applies to links).  
The Goussarov-Habiro conjecture was then (essentially) proved for $k=3$ by the first author in \cite{jbjktr}.  
Massuyeau gave a proof for $k=4$, but it is mostly based on algebraic arguments and thus does not provide any information about the corresponding finite type invariants \cite{massuyeau}.

In this paper, we classify $n$-string links up to $C_k$-move for $k\le 5$, by explicitly giving a complete set of low degree finite type invariants.  In addition to Milnor invariants, these include several `new' string link invariants constructed by evaluating knot invariants on certain closure of (cabled) string links.  See Section \ref{statements} for the statements of these main results.  As a consequence, we prove the Goussarov-Habiro Conjecture for $k\le 5$.  

We also consider the case of finite type concordance invariants.  It is known that, over the rationals, these are all given by Milnor invariants \cite{HMa}.  
We introduce the notion of $C_k$-concordance, which is the equivalence relation on (string) links generated by $C_k$-moves and concordance.  We classify knots up to $C_k$-concordance and show that two $n$-string links ($n\ge 2$) are $C_k$-concordant if and only if they share all finite type concordance invariants of degree $<k$ for $k\le 6$.  (Again, these statement provide a complete set such invariants).  

We also apply some of the techniques developed in this paper to previous works by the authors \cite{yasuhara,MY}.  We first give a classification of $2$-string links up to self $C_3$-moves and concordance, where a self $C_k$-move is a $C_k$-move with all $k+1$ strands in a single component.  Next we consider Brunnian string links.  Recall that a (string) link is Brunnian if it becomes trivial after removing any number of components.  We give a classification of Brunnian $n$-string links up to $C_{n+1}$-equivalence, thus improving a previous results of the authors \cite{MY}.  

The rest of the paper is organized as follows. 
In section 2, we recall the definitions of the main notions of this paper, and state our main results characterizing finite type string link invariants of degree $<5$.  
In section 3, we review the main tool used in the proofs, namely the theory of claspers, and provide several key lemmas.  
The main results are proved in section 4. 
In section 5, we give a characterization of finite type concordance invariants for string links of degree $<6$.  
Finally, we give in Section 6 the classification of $2$-string links up to self $C_3$-moves and concordance, and Section 7 contains our (improved) result on Brunnian string links.  
\begin{acknowledgments}
This work was done while the first author was a visiting Tokyo Institute of Technology.  He thanks Hitoshi Murakami for his warm hospitality.  
\end{acknowledgments}
\section{Statements of the main results}\label{statements}
In this section, we state our main results, which provide a complete set of finite type string link invariants of degree $<5$ and validates the Goussarov-Habiro conjecture up to this degree.
\subsection{Preliminaries} \label{preliminaries}
In this subsection we recall the definitions and properties of finite type string link invariants, and review several examples that will be used in our main results.  
\subsubsection{String links} \label{sl}
Let $n\ge 1$, and let $D^2$ be the standard two-dimensional disk equipped with $n$ marked points $x_1,...,x_n$ in its interior.  
Let $I$ denote the unit interval.  
An \textit{$n$-string link}, or $n$-component string link, is a proper embedding  
\[ \sigma : \bigsqcup_{i=1}^n I_i \rightarrow D^2\times I, \]
of the disjoint union $\sqcup_{i=1}^{n} I_i$ of $n$ copies of $I$ in $D^2\times I$, such that for each $i$, the image $\sigma_i$ of $I_i$ runs from $(x_i,0)$ to $(x_i,1)$.   
Abusing notation, we will also denote by $\s \subset D^2\times I$ the image of the map $\s$, and $\sigma_i$ is called the $i$th string of $\sigma$.  
Note that each string of an $n$-string link is equipped with an (upward) orientation induced by the natural orientation of $I$.

The set $\mathcal{SL}(n)$ of isotopy classes of $n$-string links fixing the endpoints has a monoidal structure, with composition given by the \emph{stacking product} and with the trivial $n$-string link $\1_n$ as unit element.   
We shall sometimes denote the trivial string link by $\1$ when the number of component is irrelevant.  

There is a surjective map $\hat\ :  \mathcal{SL}(n)\rightarrow \mathcal{L}(n)$ which sends an $n$-string link $\s$ to its closure $\hat\s$ (in the usual sense).  For $n=1$, this map is a monoid isomorphism.   

We have a descending filtration 
 \[ \mathcal{SL}(n)=\mathcal{SL}_1(n)\supset \mathcal{SL}_2(n)\supset \mathcal{SL}_3(n)\supset...\] 
where $\mathcal{SL}_k(n)$ denotes the set of {\em $C_k$-trivial} $n$-string links, i.e., string links which are 
$C_k$-equivalent to $\1_n$.  
For $1\le k\le l$, let $\mathcal{SL}_k(n) / C_l$ denote the set of $C_l$-equivalence classes of $C_k$-trivial $n$-string links.  This is known to be a finitely generated nilpotent group.  Furthermore, if $l\le 2k$, this group is abelian \cite[Thm. 5.4]{H}.  
\subsubsection{Finite type invariants of string links}\label{fti}
A \emph{singular $n$-string links} is a proper immersion $\sqcup_{i=1}^{n} I_i \rightarrow D^2\times I$ such that the image of $I_i$ runs from $(x_i,0)$ to $(x_i,1)$ ($1\le i\le n$), and whose singularities are transverse double points (in finite number). 

Denote by $\mathbf{Z}\mathcal{SL}(n)$ the free abelian group generated by $\mathcal{SL}(n)$.  
A singular $n$-string link $\sigma$ with $k$ double points can be expressed as an element of $\mathbf{Z}\mathcal{SL}(n)$ using the following skein formula.  
\begin{equation} \label{double}
\double = \Over - \under 
\end{equation}

Let $A$ be an abelian group. An $n$-string link invariant $f : \mathcal{SL}(n)\rightarrow A$ is a 
\emph{finite type invariant of order $\le k$} if its linear extension to $\mathbf{Z}\mathcal{SL}(n)$ vanishes on every $n$-string-link with (at least) $k+1$ double points.
If $f$ is of order $\le k$ but not of order $k-1$, then $f$ is called a finite type invariant of order $k$.

We recall several classical examples of such invariants in the next two subsections.  

The Kontsevich integral \cite{Kontsevich} 
 $$ Z:\mathcal{SL}(n)\rightarrow \mathcal{A}(\sqcup_n I) $$
is universal among rational-valued finite type string link invariants.  The target space $\mathcal{A}(\sqcup_n I)$ of $Z$ is the space of \emph{Jacobi diagrams} on $\sqcup_{i=1}^{n} I_i$, that is, the vector space over $\mathbb{Q}$ generated by  vertex-oriented unitrivalent diagrams whose univalent vertices are identified with distinct points on $\sqcup_{i=1}^{n} I_i$, modulo the AS, IHX and STU relations \cite{BNv,BN}.  
Recall that $\mathcal{A}(\sqcup_n I)$ is graded by the degree of Jacobi diagrams, which is defined as half the number of vertices.  
\subsubsection{Finite type knot invariants}\label{knotfti}
In this subsection we recall a few classical results on finite type knot invariants.

Recall that the \emph{Conway polynomial} of a knot $K$ has the form 
$$\nabla_K(z)= 1 + \sum_{k\ge 1} a_{2k}(K) z^{2k}. $$
It is not hard to show that the $z^{2k}$-coefficient $a_{2k}$ in the Conway polynomial is a finite type invariant of degree $2k$ \cite{BNv}.  

Recall also that the \emph{HOMFLYPT polynomial} of a knot $K$ is of the form  
$$P(K;t,z)=\sum_{k=0}^N P_{2k}(K;t)z^{2k},$$
where $P_{2k}(K;t)\in \mathbb{Z}[t^{\pm 1}]$ is called the $2k$th coefficient polynomial of $K$.  
Denote by $P_{2k}^{(l)}(K)$ the $l$th derivative of $P_{2k}(K;t)$ evaluated at $t=1$.  
It was proved by Kanenobu and Miyazawa that $P_{2k}^{(l)}$ is a finite type invariant of degree $2k+l$ \cite{KM}.   

Note that both the Conway and HOMFLYPT polynomials of knots are invariant under orientation reversal, and that both are multiplicative under the connected sum \cite{lickorish}.  

In the rest of this paper, we will freely evaluate these invariants on components of an $n$-string link, via the closure  isomorphism $\mathcal{SL}(1)\simeq \mathcal{L}(1)$.  For example, $a_2(\si_i)$ denotes the invariant $a_2$ of the closure $\hat{\si_i}$.  
\subsubsection{Milnor invariants} \label{milnor}
Given an $n$-component oriented, ordered link $L$ in $S^3$, Milnor invariants $\overline{\mu}_L(I)$ of $L$ are defined for each multi-index $I=i_1 i_2 ...i_m$ (i.e., any sequence of possibly repeating indices) among $\n$ \cite{Milnor,Milnor2}.  
The number $m$ is called the \emph{length} of Milnor invariant $\overline{\mu}(I)$, and is denoted by $|I|$.  
Unfortunately, the definition of these $\overline{\mu}(I)$ 
contains a rather intricate self-recurrent indeterminacy.  

Habegger and Lin showed that Milnor invariants are actually well defined integer-valued invariants of string links \cite{HL}, and that the indeterminacy in Milnor invariants of a link is equivalent to the indeterminacy in regarding it as the closure of a string link.  
We refer the reader to \cite{HL} or \cite{yasuhara} for a precise definition of Milnor invariants $\mu(I)$ of string links.  
The smallest length Milnor invariants $\mu_\si(ij)$ of a string link $\si$ coincide with the linking numbers $lk(\hat{\si_i},\hat{\si_j})$.  Milnor invariants are thus sometimes referred to as `higher order linking numbers'.  
 
It is known that $\mu(I)$ is a finite type invariant of degree $|I|-1$ for string links \cite{BN,Lin}.  
\begin{convention} \label{conv_milnor}
As said above, each Milnor invariant $\mu(I)$ for $n$-string links is indexed by a sequence $I$ of \emph{possibly repeating}  integers in $\n$.  In the following, when denoting indices of Milnor invariants, we will always let \emph{distinct} letters denote \emph{distinct} integers, unless otherwise specified.  For example, $\mu(iijk)$ ($1\le i,j,k\le n$) stands for all Milnor invariants $\mu(iijk)$ with $i, j, k \in \n$ pairwise distincts.    
\end{convention}
\subsection{Invariants of degree $\le 2$ for string links} \label{statec3}
We start by recalling the classification of $n$-string links up to $C_3$-equivalence due to the first author.\footnote{Actually, the present statement is stronger than the one appearing in \cite{jbjktr}.  However, the proof given in Subsection \ref{proofc3} is essentially contained in \cite{jbjktr}.  } 

I turns out that, in addition to the $z^2$--coeficient $a_2$ in the Conway polynomial (which is essentially the only finite type knot invariant of degree $\le 2$) and Milnor invariants of length $\le 3$, this classification requires an additional finite type invariant of degree $2$ for $2$-string links 
 $$ f_2:\mathcal{SL}(2)\longrightarrow \mathbb{Z}, $$
defined by $f_2(\sigma)=a_2(\overline{\sigma})$.  Here $\overline{\sigma}$ denotes the \emph{plat closure} of $\sigma$, which is the knot obtained by identifying the two upper (resp. lower) endpoints of $\sigma$.  
More precisely, we have the following.
\begin{theorem}[\cite{jbjktr}]\label{c3}
Let $\si, \si'\in \mathcal{SL}(n)$.  Then the following assertions are equivalent:
\be 
 \item $\si$ and $\si'$ are $C_{3}$-equivalent, 
 \item $\si$ and $\si'$ share all finite type invariants of degree $\le 2$,
 \item $\si$ and $\si'$ have same Kontsevich integral up to degree $2$, 
 \item $\si$ and $\si'$ share all invariants $a_2$ and $f_2$, and all Milnor invariants 
 $\mu(ij)$ ($1\le i<j\le n$) and $\mu(ijk)$ ($1\le i<j<k\le n$).  
\ee 
In $(4)$, by $\si$ and $\si'$ share all invariants $a_2$ and $f_2$, we mean that $\si=\cup_{i=1}^n \si_i$ and $\si'=\cup_{i=1}^n \si'_i$ satisfy $a_2(\si_i)=a_2(\si'_i)$ and $f_2(\si_i\cup \si_j)=f_2(\si'_i\cup \si'_j)$ for all $1\le i<j\le n$. 
\end{theorem}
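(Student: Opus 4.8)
The plan is to prove the four assertions equivalent by establishing the cycle of implications $(1)\Rightarrow(2)\Rightarrow(3)\Rightarrow(4)\Rightarrow(1)$, with essentially all of the content concentrated in the last one.

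For $(1)\Rightarrow(2)$ I would invoke the general principle---the `if' part of the Goussarov--Habiro conjecture, which holds unconditionally---that a $C_k$-move preserves every finite type invariant of degree $<k$; here $k=3$. For $(2)\Rightarrow(3)$ I would use that each coefficient of the Kontsevich integral $Z$ in a fixed degree $d$ is a rational finite type invariant of degree $d$, so that agreement of all invariants of degree $\le 2$ forces agreement of $Z$ through degree $2$. For $(3)\Rightarrow(4)$ I would note that the four families in $(4)$ are all finite type invariants of degree $\le 2$ (degree $1$ for $\mu(ij)$; degree $2$ for $a_2$, $f_2$ and $\mu(ijk)$, as recalled in \S\ref{knotfti}--\S\ref{milnor}), so by universality of $Z$ they factor through its degree $\le 2$ truncation and are therefore determined by the data in $(3)$.

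The substance is $(4)\Rightarrow(1)$, which I would attack through clasper calculus on the nilpotent group $G:=\mathcal{SL}(n)/C_3$. First, since the $\mu(ij)$ are exactly the linking numbers and these classify string links up to $C_2$-equivalence \cite{MN}, the hypothesis that $\sigma$ and $\sigma'$ share all $\mu(ij)$ shows they are $C_2$-equivalent; equivalently the element $h:=\bar\sigma\cdot\sigma'$, where $\bar\sigma$ is a $C_3$-inverse of $\sigma$ in $G$, lies in the abelian subgroup $G_2:=\mathcal{SL}_2(n)/C_3$, and $\sigma\sim_{C_3}\sigma'$ is equivalent to $h=\mathbf{1}_n$. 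The remaining degree $2$ invariants restrict to homomorphisms on the graded piece $G_2$, and because all lower-degree contributions vanish on $C_2$-trivial string links they behave additively there; thus sharing $a_2$, $f_2$ and $\mu(ijk)$ forces $a_2(h)=f_2(h)=\mu(ijk)(h)=0$ for every admissible index. It therefore suffices to prove that these three families of homomorphisms are \emph{jointly injective} on $G_2$.

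To do this I would produce an explicit generating set of $G_2$ by degree $2$ tree claspers ($Y$-claspers) on $\mathbf{1}_n$, indexed by the multiset $\{a,b,c\}$ of strands grabbed by their three leaves, and use the $AS$, $IHX$ and leaf-slide relations to reduce to three types: leaves on three distinct strands $\{i,j,k\}$, on $\{i,i,j\}$, and on $\{i,i,i\}$. The key computations are then that $\mu(ijk)$ evaluates to $\pm1$ on the first type, that $a_2(\sigma_i)$ detects the purely local type $\{i,i,i\}$, and that $f_2$ applied to the relevant two-component sublinks detects the mixed type $\{i,i,j\}$ through its plat closure. I expect the main obstacle to be precisely this last bookkeeping: organizing the $IHX$ relations so that the chosen generators are linearly independent modulo $C_3$, and verifying that the configurations carrying a repeated index---whose natural detector would be the length-$3$ Milnor invariant $\mu(iij)$, absent from list $(4)$---are in fact pinned down by $f_2$ together with $a_2$ and $\mu(ijk)$. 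Once the evaluation matrix of $\{a_2,f_2,\mu(ijk)\}$ against this generating set is shown to have full rank, joint injectivity follows, giving $h=\mathbf{1}_n$ and hence $(1)$.
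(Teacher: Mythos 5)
Your proposal is correct and follows essentially the same route as the paper: after reducing to $C_2$-equivalence via the linking numbers and \cite{MN}, the paper likewise generates the abelian group $\mathcal{SL}_2(n)/C_3$ by the three index types of $C_2$-trees ($t_i$, $w_{ij}$, $b_{ijk}$ of Figure \ref{c2tree}) and inverts the (triangular) evaluation matrix of $a_2$, $f_2$ and $\mu(ijk)$ against them, obtaining exactly the exponents $\alpha_i=(a_2)_i(\si)$, $\beta_{ij}=(f_2)_{ij}(\si)-(a_2)_i(\si)-(a_2)_j(\si)$, $\gamma_{ijk}=\mu_\si(ijk)$ that your ``full rank'' step amounts to. The only cosmetic difference is that the paper phrases the conclusion as an explicit normal form for the $C_3$-class rather than as joint injectivity of homomorphisms on $G_2$, and handles $(2)\Leftrightarrow(3)$ via torsion-freeness of $\mathcal{SL}(n)/C_3$, which is read off from that normal form.
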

\begin{remark} \label{rem_thm}
In subsequent statements, we shall make use of a similar abuse of notation as in assertion (4) of Theorem \ref{c3}.    
\end{remark}
%
%
\subsection{Invariants of degree $3$ for string links}\label{statec4}
Recall that there is essentially only one finite type knot invariant of degree $3$, namely $P_0^{(3)}$. 
Let 
$$ f_3:\mathcal{SL}(2)\rightarrow \mathbf{Z} $$ 
be defined by $f_3(\sigma) := P_0^{(3)}\left(\overline{\sigma}\right)$, 
where $\overline{\sigma}$ is the plat-closure of $\sigma$, and let 
$$ V_3:\mathcal{SL}(3)\rightarrow \mathbf{Z} $$ 
be defined by $V_3(\sigma) := P_0^{(3)}\left(cl_3 \sigma\right)$, 
where $cl_3 \sigma$ is the closure operation illustrated in Figure \ref{threeclose}.  

Clearly, $f_3$ and $V_3$ are both finite type invariants of degree $3$.
\begin{theorem}\label{c4}
Let $\si$, $\si'\in \mathcal{SL}(n)$. Then the following assertions are equivalent:
\be 
 \item $\si$ and $\si'$ are $C_{4}$-equivalent, 
 \item $\si$ and $\si'$ share all finite type invariants of degree $\le 3$,
 \item $\si$ and $\si'$ have same Kontsevich integral up to degree $3$, 
 \item $\si$ and $\si'$ share all invariants $a_2$, $P_0^{(3)}$, $f_2$, $f_3$ and $V_3$, and all Milnor invariants 
 $\mu(ij)$, $\mu(iijj)$ ($1\le i<j\le n$), $\mu(ijk)$ ($1\le i<j<k\le n$), $\mu(ijkl)$ ($1\le i,j< k< l\le n$) and $\mu(ijkk)$ ($1\le i,j,k\le n$ ; $i<j$). 
\ee 
\end{theorem}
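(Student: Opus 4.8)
The strategy is to establish the cycle $(1)\Rightarrow(2)\Rightarrow(3)\Rightarrow(4)\Rightarrow(1)$, in which only the last implication carries real content. For $(1)\Rightarrow(2)$ I would invoke the general principle from clasper theory that $C_4$-equivalent string links agree on all finite type invariants of degree $<4$; this is the always-valid `if' part of the Goussarov--Habiro conjecture. For $(2)\Rightarrow(3)$, each coefficient of the degree $\le 3$ truncation of the Kontsevich integral $Z$ is a rational finite type invariant of degree $\le 3$, so two string links agreeing on all such invariants have the same $Z$ modulo degree $4$. For $(3)\Rightarrow(4)$, all the invariants listed in $(4)$ are finite type of degree $\le 3$ --- this was recorded in Subsections \ref{knotfti} and \ref{milnor} for $a_2$, $P_0^{(3)}$ and the Milnor invariants of length $\le 4$, and stated for $f_2$, $f_3$, $V_3$ just before the theorem --- so by universality of $Z$ they all factor through its degree $\le 3$ part.

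The content is $(4)\Rightarrow(1)$. Since the invariants in $(4)$ include the full degree $\le 2$ list of Theorem \ref{c3} (namely $a_2$, $f_2$, $\mu(ij)$ and $\mu(ijk)$), that theorem immediately yields that $\si$ and $\si'$ are $C_3$-equivalent. I would then work in the group $\mathcal{SL}(n)/C_4$ --- a group because $C_k$-equivalence is compatible with the stacking product and every class is invertible, cf. \cite[Thm. 5.4]{H} --- and set $T:=\si'\cdot\si^{-1}$. By the previous sentence $T$ is $C_3$-trivial, hence represents a class in the finitely generated abelian group $\mathcal{SL}_3(n)/C_4$, and it suffices to prove that this class is trivial. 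On a $C_3$-trivial link the degree $\le 2$ invariants of $(4)$ take their trivial values automatically, while each degree $3$ invariant of $(4)$ --- that is $P_0^{(3)}$, $f_3$, $V_3$ and the length-$4$ Milnor invariants --- restricts to a homomorphism on $\mathcal{SL}_3(n)/C_4$; the hypothesis that $\si$ and $\si'$ share them is then exactly the statement that all these homomorphisms vanish on $T$.

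The key geometric input is an explicit presentation of $\mathcal{SL}_3(n)/C_4$. Using the clasper calculus of Section \ref{clasp} I would show that this group is generated by surgeries on $C_3$-trees (degree-$3$ tree claspers, which carry four leaves distributed among the components $i,j,k,l$), and cut the generating set down with the AS, IHX and repeated-leaf relations. These generators fall into a `Milnor' family, whose classes are detected by the length-$4$ Milnor invariants $\mu(ijkl)$, $\mu(ijkk)$ and $\mu(iijj)$, and into families carrying local knotting read off by $P_0^{(3)}$ and by the closure invariants $f_3$ and $V_3$. I would then tabulate the value of each degree-$3$ invariant on each generator, assemble the resulting homomorphism $\mathcal{SL}_3(n)/C_4\to\modZ^{N}$, and check that it is injective; injectivity forces the class of $T$, on which every coordinate vanishes, to be trivial, which is the desired $C_4$-triviality.

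The main obstacle is precisely this completeness/injectivity computation. It requires first pinning down a minimal generating set of $\mathcal{SL}_3(n)/C_4$ together with its relations --- in particular controlling how tree claspers with repeated leaf-indices interact with the local-knotting generators --- and then computing enough invariant values to certify that no nontrivial class escapes detection. The delicate part is separating the Milnor contribution from the local-knotting contribution and verifying that $f_3$ and $V_3$, which were introduced exactly to see the classes invisible both to Milnor invariants and to $P_0^{(3)}$ of the individual components, give linearly independent readings on the relevant generators. Confirming that the chosen plat- and $cl_3$-closures produce such independent readings is where the genuine work lies.
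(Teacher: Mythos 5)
Your proposal follows essentially the same route as the paper: reduce to the $C_3$-trivial difference via Theorem \ref{c3}, present the abelian group $\mathcal{SL}_3(n)/C_4$ by surgeries along $C_3$-trees sorted by index (the paper's Lemma \ref{index1} together with the AS, IHX and STU relations), and check that the listed degree-$3$ invariants separate the resulting generators. The only caveat is that you defer precisely the computation that constitutes the bulk of the paper's proof --- the explicit generators $H_i$, $H_{ij}$, $X_{ij}$, $H_{ijk}$, $X_{ijk}$, $H_{ijkl}$ and the evaluation of $P_0^{(3)}$, $f_3$, $V_3$ and the length-$4$ Milnor invariants on them --- but the outline is sound, and the $3$-additivity needed for your ``restricts to a homomorphism'' step is exactly what is established in Subsection \ref{add}.
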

%
\subsection{Invariants of degree $4$ for string links}\label{statec5}
There are essentially two linearly independent finite type knot invariants of degree $4$, namely $a_4$ and $P_0^{(4)}$.  
We will use these two knot invariants to define a number of finite type string links invariants of degree $4$ by using some cabling and closure operations. 
We start by setting up some notation.  

Given a $3$-string link $\sigma$, denote by $cl_i\si$, $i=0,...,4$, the five knots obtained from $\si$ by taking the closures illustrated in Figure \ref{threeclose}.  
\begin{center}
\begin{figure}[!h]
\includegraphics{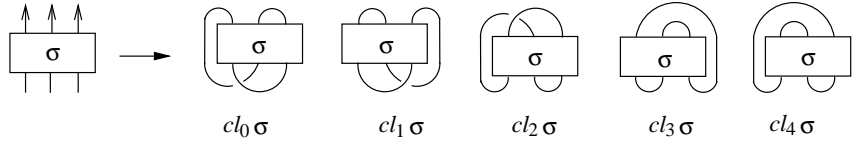}
\caption{The five closures $cl_i \si$ ($i=0,...,4$) of a $3$-string link $\si$.  } \label{threeclose}
\end{figure}
\end{center}
Also, given a $4$-string link $\sigma$, denote by $K_i(\si)$, $i=1,2,3$, the knot obtained by the closure operations represented in Figure \ref{fourclose}.  
\begin{center}
\begin{figure}[!h]
\includegraphics{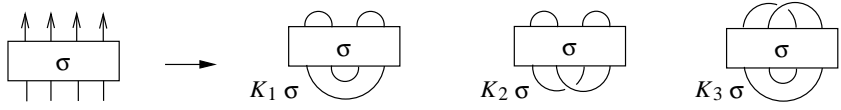}
\caption{The three closures $K_i \si$ ($i=1,2,3$) of a $4$-string link $\si$.  } \label{fourclose}
\end{figure}
\end{center}
Finally, for a $2$-string link $\si$, denote by $\Delta_i\si$ ($i=1,2$) the $3$-string link obtained by taking a $0$-framed parallel copy of the $i$th component $\si_i$ of $\si$.  

We now define five invariants of $2$-string links as follows.  
For $1\le i\le 5$, let 
$$ f^i_4:\mathcal{SL}(2)\rightarrow \mathbf{Z} $$ 
be defined by the following 
\beQ
f^1_4(\sigma) :=a_4\left(\overline{\sigma}\right) & \textrm{ , } & f^2_4(\sigma) := P_0^{(4)}\left(\overline{\sigma}\right), \\
f^3_4(\sigma) :=a_4\left(cl_0(\Delta_1\sigma)\right) & \textrm{ , } & f^4_4(\sigma) := P_0^{(4)}\left(cl_0(\Delta_1\sigma)\right), \\
 & \textrm{and} & f^5_4(\sigma) := P_0^{(4)}\left(cl_1(\Delta_2\sigma)\right).
\eeQ
We have that $f^i_4$ is a finite type invariants of degree $4$ for $i=1,...,5$. (It is immediate for $i=1,2$, and easy to check for $i=3,4,5$.)

Next we define seven invariants of $3$-string links.  
For $1\le i\le 7$, let 
$$ V^i_4:\mathcal{SL}(3)\rightarrow \mathbf{Z} $$ 
be defined by the following 
\beQ
V^1_4(\sigma) :=a_4\left(cl_1\sigma\right) & \textrm{ , } & V^2_4(\sigma) := P_0^{(4)}\left(cl_1\sigma\right), \\
V^3_4(\sigma) :=a_4\left(cl_2\sigma\right) & \textrm{ , } & V^4_4(\sigma) := P_0^{(4)}\left(cl_2\sigma\right), \\
V^5_4(\sigma) :=a_4\left(cl_3\sigma\right) & \textrm{ , } & V^6_4(\sigma) := P_0^{(4)}\left(cl_3\sigma\right), \\
 & \textrm{and} & V^7_4(\sigma) := P_0^{(4)}\left(cl_4\sigma\right). 
\eeQ
Clearly, each $V^i_4$ is a finite type invariant of degree $4$, $i=1,...,7$.

Finally, we define three finite type invariants of degree $4$ of $4$-string links
$$ W^i_4:\mathcal{SL}(4)\rightarrow \mathbf{Z} $$ 
by setting $W^i_4(\si):=P^{(4)}_0\left(K_i(\sigma)\right)$, $1\le i\le 3$.  

These various invariants, together with Milnor invariants of length $\le 5$, give the following classification of $n$-string links up to $C_5$-equivalence.  
\begin{theorem}\label{c5}
Let $\si$, $\si'\in \mathcal{SL}(n)$. Then the following assertions are equivalent:
\be 
 \item $\si$ and $\si'$ are $C_{5}$-equivalent, 
 \item $\si$ and $\si'$ share all finite type invariants of degree $\le 4$,
 \item $\si$ and $\si'$ have same Kontsevich integral up to degree $4$, 
 \item $\si$ and $\si'$ share all knots invariants of degree $\le 4$, all invariants $f_2$, $f_3$, $V_3$, $f^i_4$, $V^i_4$ and $W^i_4$, and all Milnor invariants of length $\le 5$,  
\ee 
where, in $(4)$, $\si$ and $\si'$ share all Milnor invariants of length $\le 5$ if and only if they share all $\mu(ij)$, $\mu(iijj)$ ($1\le i<j\le n$), $\mu(ijk)$ ($1\le i<j<k\le n$), $\mu(ijkl)$ ($1\le i,j< k< l\le n$), $\mu(ijkk)$ ($1\le i,j,k\le n$ ; $i<j$), $\mu(ijklm)$ ($1\le i,j,k<l<m\le n$), $\mu(iiijk)$, $\mu(ijjkk)$ and $\mu(jikll)$ ($1\le i,j,k,l\le n$ ; $j<k$). 
\end{theorem}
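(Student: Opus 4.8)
The plan is to prove Theorem \ref{c5} by establishing the cyclic chain of implications $(1)\Rightarrow(2)\Rightarrow(3)\Rightarrow(4)\Rightarrow(1)$, which is the standard architecture already used for Theorems \ref{c3} and \ref{c4}. The first three implications are essentially formal and should require little work. For $(1)\Rightarrow(2)$: if $\si$ and $\si'$ are $C_5$-equivalent, then their difference lies in $\mathcal{SL}_5(n)$, and by the theory of claspers (or the Goussarov--Habiro machinery recalled in Section \ref{clasp}) a $C_5$-move does not change any finite type invariant of degree $\le 4$; this is the `if' part that the excerpt already asserts is always true. The implication $(2)\Rightarrow(3)$ follows because the degree $\le 4$ part of the Kontsevich integral $Z$ is itself a collection of finite type invariants of degree $\le 4$ (by universality of $Z$ among rational finite type invariants). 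The implication $(3)\Rightarrow(4)$ is a matter of expressing each listed invariant---the knot invariants $a_2,P_0^{(3)},a_4,P_0^{(4)}$, the closure-based invariants $f_2,f_3,V_3,f^i_4,V^i_4,W^i_4$, and the Milnor invariants of length $\le 5$---as functions of low-degree Kontsevich coefficients. Since all of these were shown above to be finite type of degree $\le 4$, and since $Z$ is universal, each is determined by $Z$ up to degree $4$; the closure and cabling operations used to define the $f$'s, $V$'s and $W$'s are all compatible with $Z$, so this step is bookkeeping rather than genuine mathematics.

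The real content is the implication $(4)\Rightarrow(1)$: assuming $\si$ and $\si'$ agree on the explicit finite list of invariants, one must show they are $C_5$-equivalent. The strategy I would follow is the standard reduction to the abelian group $\mathcal{SL}_k(n)/C_{k+1}$ of $C_{k+1}$-equivalence classes of $C_k$-trivial string links, working one degree at a time. Using the monoid structure and the fact (cited from \cite[Thm.~5.4]{H}) that $\mathcal{SL}_k(n)/C_l$ is abelian for $l\le 2k$, I would first reduce to the case where $\si$ and $\si'$ already agree up to $C_4$-equivalence by invoking Theorem \ref{c4}: since the invariants of degree $\le 3$ appearing in Theorem \ref{c4} all appear (or are recovered) in the degree $\le 4$ list of Theorem \ref{c5}, hypothesis $(4)$ forces $\si$ and $\si'$ to be $C_4$-equivalent. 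After composing $\si'$ with the inverse of $\si$ in the monoid $\mathcal{SL}(n)$, the problem becomes: a $C_4$-trivial string link $\tau$ (the `difference' $\si\cdot(\si')^{-1}$, which is $C_4$-trivial) on which all the degree-$4$ invariants vanish must be $C_5$-trivial. Concretely this means understanding the group $\mathcal{SL}_4(n)/C_5$ via a generating set of claspers (degree-$4$ tree and looped claspers), and showing that the listed invariants separate the generators.

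The heart of the argument, and the step I expect to be the main obstacle, is therefore a careful clasper-theoretic analysis of $\mathcal{SL}_4(n)/C_5$. Using the clasper surgery technology from Section \ref{clasp}, I would produce an explicit finite generating set for this abelian group, indexed by the combinatorial types of degree-$4$ claspers (connected tree claspers with up to five leaves distributed among the components, together with the looped claspers that produce the $a_4$-type contributions). The key lemmas---presumably supplied in Section \ref{clasp}---will give relations among these generators (the STU-type, IHX-type and AS-type relations at the clasper level, plus relations coming from the interplay between leaves on the same component). I would then compute the values of $a_4,P_0^{(4)}$, the closure invariants $f^i_4,V^i_4,W^i_4$, and the length-$5$ Milnor invariants on each generator, assembling these into a matrix; the goal is to show this matrix is injective, i.e., the invariants form a complete set of coordinates on $\mathcal{SL}_4(n)/C_5$. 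The delicate part is twofold: first, ensuring the closure and cabling constructions $cl_i$, $K_i$, $\Delta_i$ are chosen precisely so that the resulting knot invariants detect exactly the clasper generators not already seen by Milnor invariants (Milnor invariants detect tree claspers with distinct-component leaves, while the $a_4$ and $P_0^{(4)}$ evaluations detect the `repeated-index' and looped contributions); and second, verifying that no generator escapes detection, which amounts to a rank computation that must be carried out separately for each relevant value of $n$ (since the number of distinct indices available changes the number of Milnor invariants and closures in play). This case analysis on the distribution of clasper leaves among components, and the accompanying linear-algebra verification that the chosen invariants are both necessary and sufficient, is where essentially all the difficulty resides.
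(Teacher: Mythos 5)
Your proposal follows essentially the same route as the paper: the formal implications are dispatched as you describe, and the substance of $(4)\Rightarrow(1)$ is carried out in the paper exactly as you outline, by building an explicit normal form for the $C_5$-class degree by degree, splitting the $C_4$-trees according to how many distinct components they meet (handled separately for $1,2,3,4$ and $5$ components), reducing generators via the AS/IHX/STU relations, and verifying by explicit rank computations (Tables 2--7) that the listed invariants separate the generators. The only cosmetic differences are that the paper proves $(3)\Leftrightarrow(2)$ via torsion-freeness of $\mathcal{SL}(n)/C_5$ rather than your chain $(2)\Rightarrow(3)\Rightarrow(4)$, and that the generators of $\mathcal{SL}_4(n)/C_5$ are all tree claspers (loops enter only as intermediate objects in the relations), with the inverse-and-difference step made rigorous through the $4$-additivity of the invariants established in Subsection 3.3.
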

%
\begin{remark}
A complete set of finite type link invariant of degree $\le 3$ has been computed in \cite{kmt} using weight systems and chord diagrams.  For $2$-component links, this has been done for degree $\le 4$ invariants in \cite{kanenobu}.  
All invariants are given by coefficients of the Conway and HOMFLYPT polynomials of sublinks.  
\end{remark}
\section{Claspers and local moves on links} \label{clasp}
The main tool in the proofs of our main results is the theory of claspers.  We recall here the main definitions and properties of this theory, and state a couple of additional lemmas that will be useful in later sections.  
\subsection{A brief review of clasper theory} \label{review} 
For convenience, we give all definitions and statements in the context of string links.  For a general definition of claspers, we refer the reader to \cite{H}.  
\begin{definition}\label{defclasp}
Let $\si$ be a string link.  
An embedded surface $G$ is called a {\em graph clasper} for $\si$ if it satisfies the following three conditions:
\be 
\item $G$ is decomposed into disks and bands, called {\em edges}, each of which 
connects two distinct disks.
\item The disks have either 1 or 3 incident edges, called {\em leaves} or 
{\em nodes} respectively.
\item $G$ intersects $\si$ transversely, and the intersections are contained in the union of the interior of the leaves. 
\ee
In particular, if each connected component of $G$ is simply connected, we call it a \emph{tree clasper}. 
\end{definition}
A graph clasper for a string link $\si$ is \emph{simple} if each of its leaves intersects $\si$ at one point.  

The degree of a connected graph clasper $G$ is defined as half of the number of nodes and leaves.  
We call a degree $k$ connected graph clasper a \emph{$C_k$-graph}.  
A connected tree clasper of degree $k$ is called a \emph{$C_k$-tree}. 
A $C_k$-graph \emph{with loop} is a $C_k$-graph which is not a $C_k$-tree.  
\begin{convention}
Throughout this paper, we make use of the following graphical convention. 
The drawing convention for claspers are those of \cite[Fig. 7]{H}, except for the following: a $\oplus$ (resp. $\ominus$) on an edge represents a positive (resp. negative) half-twist. (This replaces the convention of a circled $S$ (resp. $S^{-1}$) used in \cite{H}).    
When representing a clasper $c$ with an edge marked by a $\ast$, we implicitly also define the clasper $c^{-1}$ which is obtained from $c$ by inserting a \emph{positive half twist} in the $\ast$-marked edge.  
Likewise, when introducing the string link $\si$ obtained from $\1$ by surgery along a clasper $c$ with a $\ast$-marked edge, we implicitly also introduce the string link $\si^{-1}$ obtained from $\1$ by surgery along $c^{-1}$.  
(This convention/notation is motivated by Lemma \ref{calculus}(2)).  
We will also make use of this convention for knots in $S^3$.  
\end{convention}

Given a graph clasper $G$ for a string link $\si$, there is a procedure to construct, in a regular neighbourhood of $G$, a framed link $\gamma(G)$.  There is thus a notion of \emph{surgery along $G$}, which is defined as surgery 
along $\gamma(G)$.  
There exists a canonical diffeomorphism between $D^2\times I$ and the manifold $(D^2\times I)_{\gamma(G)}$, and    
surgery along the $C_k$-graph $G$ can be regarded as an operation on $\si$ in the (fixed) ambient space  $D^2\times I$.  
We say that the resulting string link $\si_G$ in $D^2\times I$ is obtained from $\si$ by surgery along $G$.  
In particular, surgery along a simple $C_k$-tree is a local move as illustrated in Figure \ref{ckmove}, which is equivalent to a $C_k$-move as defined in the introduction (Figure \ref{cnm}).  
\begin{figure}[!h]
\includegraphics{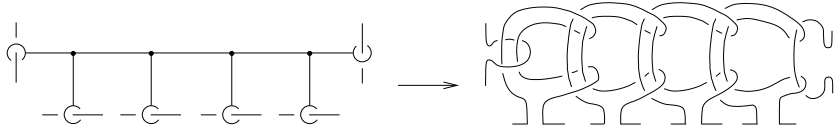}
\caption{Surgery along a simple $C_5$-tree.} \label{ckmove}
\end{figure}

A $C_k$-tree $G$ having the shape of the tree clasper in Figure \ref{ckmove} is called \emph{linear}, and the left-most and right-most leaves of $G$ in Figure \ref{ckmove} are called the \emph{ends} of $G$. 
 
The $C_k$-equivalence (as defined in the introduction) coincides with the equivalence relation on string links generated by surgeries along $C_k$-graphs and isotopies.  
In particular, it is known that two links are $C_k$-equivalent if and only if they are related by surgery along simple $C_k$-trees \cite[Thm. 3.17]{H}. 
\subsection{Calculus of Claspers}
In this subsection, we summarize several properties of the theory of clasper, whose proofs can be found in \cite{H}. 
\begin{lemma}[Calculus of Claspers] \label{calculus}
(1). Let $T$ be a union of $C_k$-trees for a string link $\si$, and let $T'$ be obtained from $T$ by passing an edge across $\si$ or across another edge of $T$, or by sliding a leaf over a leaf of another component of 
$T$.\footnote{For example, the clasper $G_U$ of Figure \ref{asihxstu_fig} is obtained from $G_T$ by sliding a leaf over another one.  } Then $\si_T \stackrel{C_{k+1}}{\sim} \si_{T'}$.  

(2). Let $T$ be a $C_k$-tree for $\1_n$ and let $\ov{T}$ be a $C_k$-tree obtained from $T$ by adding a half-twist on an edge. Then 
 $(\1_n)_T\cdot (\1_n)_{\ov{T}} \stackrel{C_{k+1}}{\sim} \1_n$.  

(3). Let $T$ be a $C_k$-tree for $\1_n$. Let $f_1$ and $f_2$ be two disks obtained by splitting a leaf $f$ of $T$ 
along an arc $\alpha$ as shown in Figure \ref{split}.
 \begin{figure}[!h]
  \includegraphics{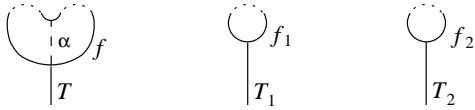}
  \caption{The $3$ claspers are identical outside a small ball, where they are as depicted. }\label{split}
 \end{figure}
Then, $(\1_n)_T \stackrel{C_{k+1}}{\sim} (\1_n)_{T_1}\cdot (\1_n)_{T_2}$, where $T_i$ denotes the $C_k$-tree for $\1_n$ obtained from $T$ by replacing $f$ by $f_i$ ($i=1,2$), see Figure \ref{split}.  
\end{lemma}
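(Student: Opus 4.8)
The plan is to deduce all three identities from the definition of clasper surgery as surgery along the associated framed link $\gamma(T)$, together with the guiding principle that a clasper of degree $\ge k+1$ may be discarded modulo $C_{k+1}$-equivalence. First I would record the elementary fact that if two claspers $G$ and $G'$ for the same string link agree outside a ball $B$ and, inside $B$, the framed links $\gamma(G)$ and $\gamma(G')$ are related by isotopies and handle slides, then $\si_G$ and $\si_{G'}$ are isotopic. The content of the lemma is then to analyze the three local modifications and show that the obstruction to such a Kirby-calculus cancellation is always realized by surgery along an auxiliary clasper of degree exactly $k+1$. (In practice one invokes Habiro's basic moves \cite{H}, but the underlying mechanism is the commutator/correction-term analysis below.)

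For part (1), I would treat the three moves (an edge across a strand of $\si$, an edge across another edge, a leaf over a leaf) uniformly. In each case $T$ and $T'$ differ by pushing a band of $\gamma(T)$ through a strand or band, which is a crossing change on the associated framed link. I would realize this crossing change by surgery along a single auxiliary clasper $Y$ obtained by encircling the offending strand or edge with a new leaf; adjoining this leaf together with the node needed to attach it raises the degree by one, so $Y$ is a $C_{k+1}$-graph. Hence $\si_{T'}$ is obtained from $\si_T$ by surgery along a degree-$(k+1)$ clasper, giving $\si_T \stackrel{C_{k+1}}{\sim}\si_{T'}$.

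For part (2), the point is that $\ov{T}$ implements the inverse surgery of $T$ up to higher degree. Stacking the two surgeries and isotoping $T$ next to $\ov{T}$, the half-twist arranges the two edges as an oppositely-framed parallel pair; the associated framed links then cancel by handle slides, with the only residual contribution a commutator clasper of degree $k+1$. Thus $(\1_n)_T\cdot(\1_n)_{\ov{T}}\stackrel{C_{k+1}}{\sim}\1_n$; equivalently, the class of $(\1_n)_{\ov{T}}$ furnishes the inverse of the class of $(\1_n)_T$ in the group $\mathcal{SL}_k(n)/C_{k+1}$.

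For part (3), splitting a leaf $f$ along $\alpha$ is a Leibniz (comultiplication) rule. I would push $T_1$ and $T_2$ into disjoint sub-balls of a neighbourhood of $f$, so that surgery along the ``fat'' leaf $f$ factors as the composite of the two surgeries along $f_1,f_2$, up to the order in which the two parallel leaves meet $\si$. Reordering them is precisely a leaf-over-leaf move from part (1), hence costs only a degree-$(k+1)$ correction, yielding $(\1_n)_T\stackrel{C_{k+1}}{\sim}(\1_n)_{T_1}\cdot(\1_n)_{T_2}$. The main obstacle throughout is bookkeeping the degree of the correction claspers: in each local model one must verify that the encircling/commutator term genuinely has degree $k+1$ and not $k$, which requires careful counting of nodes and leaves and checking that no lower-degree term survives the handle slides.
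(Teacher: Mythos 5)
The paper does not prove this lemma at all: the subsection in which it appears opens by saying that the proofs ``can be found in \cite{H}'', so these three statements are quoted from Habiro's paper (roughly his Propositions 4.4--4.6 and the leaf-splitting move) rather than established here. Your sketch is therefore not competing with an argument in the paper, and as a reconstruction of the standard proofs it identifies the right mechanisms: each local modification is paid for by surgery along correction claspers of degree exactly $k+1$, and (2) is the statement that $(\1_n)_{\ov{T}}$ inverts $(\1_n)_T$ in $\mathcal{SL}_k(n)/C_{k+1}$.

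That said, two steps in your plan are thinner than they look, and both are exactly where Habiro's \emph{zip construction} does the work that you attribute to ``Kirby-calculus cancellation''. In (1), the crossing change between an edge of $T$ and a strand is realized in the first instance by a \emph{basic} clasper (degree $1$) with one leaf encircling the edge and one encircling the strand; it is not of degree $k+1$ by construction. The degree-$(k+1)$ correction $Y$ you describe (a copy of $T$ with one extra node and one extra leaf grabbing the strand) only emerges after the leaf linking the edge of $T$ is absorbed into $T$ via the zip construction, together with the degree estimate that joining a degree-$1$ clasper to a degree-$k$ tree along an edge produces degree $k+1$. In (3), the assertion that surgery along the fat leaf ``factors as the composite of the two surgeries along $f_1,f_2$, up to the order in which the two parallel leaves meet $\si$'' is not something one can read off the framed link: $T_1$ and $T_2$ are parallel along their entire bodies, not just near $f$, and the genuine statement is again the zip construction, whose correction terms are claspers obtained by \emph{joining} $T_1$ to $T_2$ and hence have degree at least $2k\ge k+1$. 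Your reduction to a leaf-reordering move from (1) does not account for these join terms. Since you do flag that ``in practice one invokes Habiro's basic moves'', the cleanest fix is simply to cite \cite{H} for (1) and (3) outright, as the paper does, rather than suggest they follow from handle slides plus an encircling clasper.
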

In our proofs, we shall use combinations of these relations in many places, and will always refer to them as \emph{Calculus of Claspers}.  

Claspers also satisfy relations analogous to the AS, IHX and STU relations for Jacobi diagrams \cite{BNv}.  
\begin{lemma}\label{asihxstu}

(AS). Let $T$ and $T'$ be two $C_k$-graphs for $\1_n$ which differ only in a small ball as depicted in Figure \ref{asihxstu_fig}.  Then $(\1_n)_{T}\cdot (\1_n)_{T'} \stackrel{C_{k+1}}{\sim}\1_n$. 
 \begin{figure}[!h]
  \includegraphics{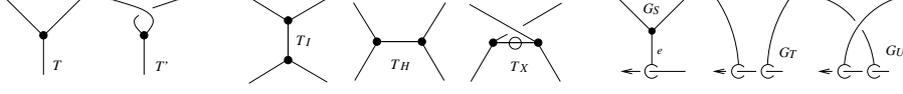}
  \caption{The AS, IHX and STU relations.  }\label{asihxstu_fig}
 \end{figure}

(IHX). Let $T_I$, $T_H$ and $T_X$ be three $C_k$-graphs for $\1_n$ which differ only in a small ball as depicted in Figure \ref{asihxstu_fig}.  Then $(\1_n)_{T_I} \stackrel{C_{k+1}}{\sim} (\1_n)_{T_H}\cdot (\1_n)_{T_X}$.  

(STU). Let $G_S$, $G_T$ and $G_U$ be three 
$C_k$-graphs for $\1_n$ which differ only in a small ball as depicted in Figure \ref{asihxstu_fig}.  Then 
$(\1_n)_{G_S}\cdot (\1_n)_{G_T} \stackrel{C_{k+1}}{\sim} (\1_n)_{G_U}$. 
\end{lemma}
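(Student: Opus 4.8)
The plan is to prove all three relations by working locally inside the small ball where the graphs differ, keeping the rest of each clasper and the whole string link $\1_n$ fixed. The main tool I would use is the Calculus of Claspers (Lemma \ref{calculus}): each of its elementary moves — passing an edge across a strand or across another edge, inserting a half-twist, or splitting a leaf — alters the result of surgery only up to $C_{k+1}$-equivalence, which is exactly the error permitted in all three conclusions. Thus the strategy is to exhibit, for each relation, a finite sequence of such moves turning one side into the other.

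For (AS), I would observe that exchanging the two bands at the node — the local difference between $T$ and $T'$ in Figure \ref{asihxstu_fig} — amounts, up to an isotopy supported in the ball, to inserting a single half-twist on one of the incident edges. Lemma \ref{calculus}(2) then says that surgery along a $C_k$-tree and along its half-twisted version produce inverse string links modulo $C_{k+1}$, which yields $(\1_n)_T\cdot(\1_n)_{T'}\stackrel{C_{k+1}}{\sim}\1_n$ at once.

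The heart of the matter is (STU), and this is the step I expect to be the main obstacle, since it is the only one carrying genuine geometric content rather than formal bookkeeping. Here $G_U$ carries a node whose incident leaf meets the strand $\si$, and I would resolve it directly: splitting that leaf along an arc separating the two portions of the clasper it couples (Lemma \ref{calculus}(3)) replaces $(\1_n)_{G_U}$ by the product of surgeries along two $C_k$-graphs, and edge-passing moves (Lemma \ref{calculus}(1)) identify these with $G_S$ and $G_T$. The delicate points are to choose the splitting arc so that the two resulting leaves reproduce exactly the local pictures of $G_S$ and $G_T$, and to track orientations and framings so that the correct half-twists (handled by Lemma \ref{calculus}(2)) appear, giving $(\1_n)_{G_S}\cdot(\1_n)_{G_T}\stackrel{C_{k+1}}{\sim}(\1_n)_{G_U}$.

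Finally, for (IHX) I would deduce the relation from (STU), just as the IHX relation is deduced from STU for Jacobi diagrams. After using Lemma \ref{calculus}(1) to slide the relevant part of the configuration next to a strand of $\1_n$ (at the cost of $C_{k+1}$-equivalence, which is harmless), I would apply (STU) to open up the two nodes lying in the ball of Figure \ref{asihxstu_fig}; the three resolved expressions for $T_I$, $T_H$ and $T_X$ then satisfy an identity that reduces, after cancelling common terms and using (AS) to match signs, to $(\1_n)_{T_I}\stackrel{C_{k+1}}{\sim}(\1_n)_{T_H}\cdot(\1_n)_{T_X}$. Alternatively one could argue directly by pushing a leaf through a node via repeated leaf-slides, but routing IHX through STU keeps the genuine geometric input concentrated in the single step above.
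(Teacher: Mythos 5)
The paper does not actually prove this lemma: the surrounding text announces that these clasper properties are quoted from Habiro's paper \cite{H}, so there is no in-paper argument to compare against, and your proposal must be judged on its own. Your treatment of (AS) is essentially the standard one (flipping the node disk inserts an odd number of half-twists on the incident edges, and Lemma \ref{calculus}(2) then gives inversion modulo $C_{k+1}$; your ``single half-twist'' is not the literal isotopy but is equivalent modulo $C_{k+1}$), and deducing (IHX) from (STU) after sliding the configuration next to a strand is a legitimate route, parallel to the Jacobi-diagram case.

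The gap is in (STU), exactly where you predicted the difficulty. First, you have the figure backwards: it is $G_S$ that carries the node, while $G_T$ and $G_U$ are the two-leaf configurations differing by a leaf slide (see the footnote to Lemma \ref{calculus} and the use of STU in the proof of Lemma \ref{flipping}, where fusing the two leaves of the trees $G_T$, $G_U$ produces the graph $G_S$ with a loop). More seriously, the moves you allow yourself cannot close the argument: splitting a leaf via Lemma \ref{calculus}(3) yields two claspers with the \emph{same} node/leaf pattern as the original, and edge-passing or leaf-sliding via Lemma \ref{calculus}(1), as well as half-twists via Lemma \ref{calculus}(2), never create or destroy a node. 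Hence no combination of these moves can relate the noded graph $G_S$ to the node-free $G_T$ and $G_U$. The correct argument does begin the way you suggest --- the leaf of $G_U$ is a band sum of the corresponding leaf of $G_T$ with a small leaf linking the edge of the other branch, and Lemma \ref{calculus}(3) splits off that second piece --- but the resulting clasper has a leaf clasping an \emph{edge}, and identifying it with the noded graph $G_S$ requires Habiro's basic clasper moves (the zip/commutator construction trading a leaf--edge linking for a trivalent node). That identification is the genuine geometric content of STU, and your proposal leaves it unproved.
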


In the rest of the paper, we will simply refer to Lemma \ref{asihxstu} as the AS, IHX and STU relations.  In some cases, it will be convenient to also use the following terminology.  If $e$ denote the edge of a graph clasper $G_S$ (resp. if $f$ and $f'$ denote the leaves of $G_T$ or $G_U$) as in Figure \ref{asihxstu_fig}, we will sometimes say that we apply the STU relation \emph{at the edge $e$} (resp. at the leaves $f$ and $f'$) when applying Lemma \ref{asihxstu}(STU).  

 Note that the STU relation stated above differs by a sign from the STU relation for Jacobi diagrams.  
 Note also that, in contrast to the Jacobi diagram case, it only holds among \emph{connected} claspers.  

We conclude this subsection with an additional lemma which will be used later.  We first need a couple of extra notation.  

Let $k>2$ and $l\in \ks$ be integers.  Denote by $\mathcal{B}_k(l)$ the set of all bijections $\alpha: \{1,...,k-1\}\longrightarrow\{1,...,k\}\setminus\{l\}$ such that $\alpha(1)<\alpha(k-1)$.   
We denote by $id\in \mathcal{B}_k(l)$ the element which maps $i$ to itself if $1\le i<l$, and to $i+1$ otherwise.  
For each $\alpha\in \mathcal{B}_k(l)$, let $T_{\alpha}(l)$ and $\overline{T_{\alpha}}(l)$ denote the $C_k$-trees for $\1_k$ represented in Figure \ref{flipping_fig}.  Denote respectively by $B_{\alpha}(l)$ and $\overline{B_{\alpha}}(l)$ the $k$-string links obtained from $\1_n$ by surgery along $T_{\alpha}(l)$ and $\overline{T_{\alpha}}(l)$.  
 \begin{figure}[!h]
  \includegraphics{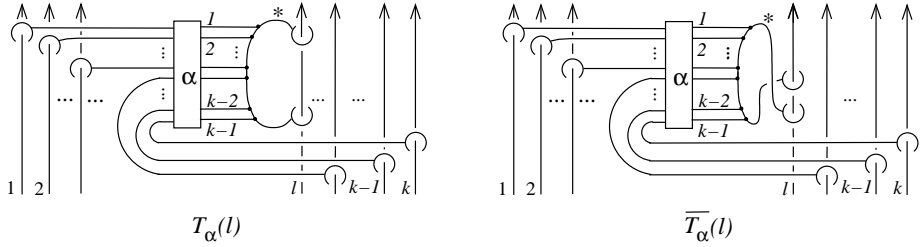}
  \caption{The $C_k$-trees $T_{\alpha}(l)$ and $\overline{T_{\alpha}}(l)$. }\label{flipping_fig}
 \end{figure}

\begin{lemma}\label{flipping}
Let $k>2$ and $l\in \ks$ be integers.  For any $\alpha\in \mathcal{B}_k(l)$ and any integer $l'~(1\leq l'\leq k,~l'\neq l)$, there is a bijection $\beta\in \mathcal{B}_k(l')$ such that $B_{\alpha}(l)\cdot \left(\overline{B_{\alpha}}(l)\right)^{-1}\stackrel{C_{k+1}}{\sim} B_{\beta}(l')\cdot \left(\overline{B_{\beta}}(l')\right)^{-1}$.
\end{lemma}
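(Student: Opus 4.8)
The plan is to argue entirely inside the group $\mathcal{SL}_k(k)/C_{k+1}$, which is \emph{abelian} since $k+1\le 2k$ (\cite[Thm.~5.4]{H}); this lets me commute surgeries freely and use additive bookkeeping, and it turns each relation of Lemmas \ref{calculus} and \ref{asihxstu} into an honest identity modulo $C_{k+1}$. The object to be transported is $x:=B_{\alpha}(l)\cdot\left(\overline{B_{\alpha}}(l)\right)^{-1}$, the difference between the linear $C_k$-tree $T_{\alpha}(l)$ (whose two end-leaves lie on component $l$, with the interior leaves labelled by $\alpha$ and normalized by $\alpha(1)<\alpha(k-1)$) and its flip $\overline{T_{\alpha}}(l)$.

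First I would locate component $l'$: in $T_{\alpha}(l)$ it carries exactly one leaf, at the interior position $j$ with $\alpha(j)=l'$. Using the IHX relation (Lemma \ref{asihxstu}) repeatedly, together with the leaf- and edge-slides of the Calculus of Claspers (Lemma \ref{calculus}), I would move this branch along the caterpillar toward an end, rewriting $T_{\alpha}(l)$ as a combination of linear $C_k$-trees in which the $l'$-leaf and an $l$-leaf occupy adjacent nodes. The key device is to perform every IHX step and every slide \emph{simultaneously} on $T_{\alpha}(l)$ and on its flip $\overline{T_{\alpha}}(l)$: because the flip reverses the orientation of the caterpillar, the auxiliary trees produced by IHX enter $B_{\alpha}(l)$ and $\overline{B_{\alpha}}(l)$ with opposite parity, so they cancel in the difference $x$, leaving only the two extreme terms in which the roles of $l$ and $l'$ have been exchanged.

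The last step is to read off the surviving pair of trees as $T_{\beta}(l')$ and $\overline{T_{\beta}}(l')$, where $\beta$ is the bijection of the new interior leaves onto $\{1,\ldots,k\}\setminus\{l'\}$; should $\beta$ fail the normalization $\beta(1)<\beta(k-1)$, interchanging $T_{\beta}$ with $\overline{T_{\beta}}$ (equivalently applying AS at each node, which only reverses the sign of the difference and is absorbed by the half-twist convention behind Lemma \ref{calculus}(2)) restores it, so that $\beta\in\mathcal{B}_k(l')$ and $x\stackrel{C_{k+1}}{\sim}B_{\beta}(l')\cdot\left(\overline{B_{\beta}}(l')\right)^{-1}$. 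I expect the main obstacle to be precisely the middle step: checking that the IHX error terms genuinely cancel in the \emph{flip-difference} rather than merely being $C_k$-trees of the same combinatorial type, and tracking the accumulated half-twists and AS signs so that the output is exactly the normalized flip-difference at $l'$ and not some other union of $C_k$-trees. A secondary subtlety is keeping the argument uniform in $k$ and checking that nothing degenerates at the extreme nodes of the caterpillar, where IHX must be applied with one leaf already sitting at an end.
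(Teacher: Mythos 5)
Your reduction to the abelian group $\mathcal{SL}_k(k)/C_{k+1}$ is fine, but the heart of your argument --- that the IHX error terms produced while shuffling the $l'$-branch along the caterpillar ``enter $B_{\alpha}(l)$ and $\overline{B_{\alpha}}(l)$ with opposite parity, so they cancel in the difference'' --- is asserted, not proved, and as stated it is not believable. Each IHX application splits a tree into a product of two trees; performing the corresponding move on the flipped tree $\overline{T_{\alpha}}(l)$ produces an error term that is itself (roughly) the \emph{flip} of the first error term, and the difference between a linear $C_k$-tree and its flip is generically nontrivial modulo $C_{k+1}$ --- that is precisely the phenomenon the lemma is about. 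So the cancellation you need is of the same nature as the statement you are trying to prove, and the argument is circular (or at best requires a delicate induction on the antisymmetric part that you have not carried out). Your own flagged ``main obstacle'' is exactly where the proof is missing.

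The paper's proof sidesteps all of this with two applications of STU rather than a cascade of IHX moves. The trees $T_{\alpha}(l)$ and $\overline{T_{\alpha}}(l)$ coincide except in a $3$-ball where they look like $G_T$ and $G_U$ of Figure \ref{asihxstu_fig} (the two end $l$-leaves are adjacent on component $l$ and differ by a leaf slide). Hence by Lemma \ref{asihxstu}(STU) the flip-difference $B_{\alpha}(l)\cdot\left(\overline{B_{\alpha}}(l)\right)^{-1}$ equals, modulo $C_{k+1}$, surgery along a single $C_k$-graph $G$ with one loop, meeting each component of $\1_k$ exactly once, each leaf joined to the loop by a single edge. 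Applying STU again, now at the edge of $G$ attached to the $l'$-leaf, re-expands $\1_G$ as a difference of the two linear trees obtained by cutting the loop there, which is exactly $B_{\beta}(l')\cdot\left(\overline{B_{\beta}}(l')\right)^{-1}$ for the evident $\beta\in\mathcal{B}_k(l')$ (after the AS/half-twist normalization you describe, which is the one unproblematic part of your write-up). I would recommend replacing the IHX-shuffling plan by this loop-graph argument; if you insist on an IHX proof, you must actually exhibit the cancellation, which amounts to reproving the STU-loop identity by hand.
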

\begin{proof}
Observe that $T_{\alpha}(l)$ and $\overline{T_{\alpha}}(l)$ are identical except in a $3$-ball where they look exactly like $G_T$ and $G_U$ in Figure \ref{asihxstu_fig}.  
So by the STU relation we have $B_{\alpha}(l)\cdot \left(\overline{B_{\alpha}}(l)\right)^{-1}\stackrel{C_{k+1}}{\sim} \1_G$, where $G$ is a $C_k$-graph intersecting each component of $\1_k$ once.  Note that $G$ has one loop, and that each leaf of $G$ is connected to the loop by a single edge.   
So for each $1\leq l'\leq k$ we can apply the STU relation at the edge of $G$ which is attached to the leaf intersecting the $l'$th component of $\1_k$.  This gives the desired formula.  
\end{proof}
\subsection{$k$-additivity} \label{add}
We now introduce the notion of $k$-additivity of a string link invariant.  
\begin{definition} \label{def_kadd}
Let $k, n\ge 1$ be integers.  We say that an invariant $v: \mathcal{SL}(n)\rightarrow \mathbb{Z}$ is \emph{$k$-additive} 
if for every $\si\in \mathcal{SL}(n)$ and every $\si'\in \mathcal{SL}_k(n)$, we have $v(\si\cdot \si')=v(\si)+v(\si')$.  
\end{definition}

Note that a string link invariant is additive if and only if it is $1$-additive.  Note also that for $k>l$, 
the $l$-additivity implies the $k$-additivity.  
We now show that all the invariants involved in our classification results are $k$-additive for some $k$. 

First, Milnor invariants $\mu(I)$ of length $|I|=k$ are $(k-1)$-additive.  This follows from Milnor invariants' additivity property \cite[Lem. 3.3]{MY} and the fact that Milnor invariants of length $k$ are $C_k$-equivalence invariants \cite[Thm. 7.2]{H}. 

Now, observe that the plat closure of the product of two $n$-string links $\si\cdot \si'$ is just the connected sum of their plat closures.  So it follows, by the multiplicativity of the Conway and HOMFLYPT polynomial (see Subsection \ref{knotfti}), that $f_2$ is $2$-additive, $f_3$ is $3$-additive, and $f^1_4$ and $f^2_4$ are both $4$-additive.  

Next we prove the following.  
\begin{claim}\label{claimadditivity}
Let $\si\in \mathcal{SL}(3)$, and $\si'\in \mathcal{SL}_k(3)$ for an integer $k\ge 1$.  
Then for each $i=0,...,4$, the closure $cl_i$ of $\si\cdot \si'$ satisfies $cl_i (\si\cdot \si')\stackrel{C_{k+1}}{\sim} (cl_i\si)\sharp (cl_i\si')$.
\end{claim}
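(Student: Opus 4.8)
The plan is to give the clasper $T$ realizing the $C_k$-trivial factor $\sigma'$ an explicit position inside the stacking product, and then to localize it, via Calculus of Claspers, into a ball where the closure looks trivial, so that surgery along it becomes a connected sum. First I would use that $\sigma'\in\mathcal{SL}_k(3)$ to reduce to the case $\sigma'=(\1_3)_T$, where $T$ is a disjoint union of simple $C_k$-trees for $\1_3$: this is the standard clasper description of a $C_k$-trivial string link, and since the stacking product, the closures $cl_i$ and the connected sum all preserve $C_{k+1}$-equivalence, it suffices to prove the claim for this representative. Because $\sigma'$ occupies the top part of the cylinder in the stacking product, the trees $T$ sit in the top region, on the (trivial) upward continuations of the strings of $\sigma$, disjoint from $\sigma$ itself; hence $\sigma\cdot\sigma'=\sigma_T$, and since $T$ lies in the interior, $cl_i(\sigma\cdot\sigma')=(cl_i\sigma)_T$, with $T$ now a clasper supported in the top region and the top closure arcs of the knot $cl_i\sigma$.

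Next I would localize $T$. All leaves of $T$ lie in the ball $B$ consisting of the top region together with the top closure arcs, and inside $B$ the knot $cl_i\sigma$ is precisely the trivial tangle appearing in $cl_i\1_3$ (the three upward strands and the closure arcs), independently of $\sigma$. Sliding leaves along the knot inside $B$, and passing edges of $T$ across the knot or across one another, costs only $C_{k+1}$-equivalence by Calculus of Claspers (Lemma \ref{calculus}(1)); so I can gather all of $T$ into a small ball $B_0\subset B$ meeting $cl_i\sigma$ in a single trivial arc $\alpha$. Since surgery along a clasper all of whose leaves meet one unknotted arc is a connected-sum operation, this yields $(cl_i\sigma)_T\stackrel{C_{k+1}}{\sim}(cl_i\sigma)\sharp\kappa$, where $\kappa$ is the knot obtained from the unknot by surgery along $T$ in the local picture inside $B_0$.

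Finally I would identify $\kappa$ with $cl_i\sigma'$. The point is that the localization just performed takes place entirely inside $B$, where $cl_i\sigma$ and $cl_i\1_3$ coincide; running the identical sequence of moves on the unknot $cl_i\1_3$ gives $cl_i\sigma'=(cl_i\1_3)_T\stackrel{C_{k+1}}{\sim}(cl_i\1_3)\sharp\kappa$. As $cl_i\1_3$ is the unknot (readily checked from Figure \ref{threeclose}), this reads $cl_i\sigma'\stackrel{C_{k+1}}{\sim}\kappa$, and combining the two displays proves $cl_i(\sigma\cdot\sigma')\stackrel{C_{k+1}}{\sim}(cl_i\sigma)\sharp(cl_i\sigma')$. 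The main obstacle is the geometric bookkeeping in the localization step: one must ensure that the whole clasper $T$ can be gathered onto a single trivial arc using only moves confined to the ball $B$ on which the two knots agree, so that the connected summand $\kappa$ produced from $cl_i\sigma$ is literally the same as the one produced from $cl_i\1_3$; checking this uniformly for all five closures $cl_0,\dots,cl_4$ amounts to inspecting their top closure arcs in Figure \ref{threeclose}.
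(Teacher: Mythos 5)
Your proposal is correct and follows essentially the same strategy as the paper: write $\sigma'=(\1_3)_T$ for a union of simple $C_k$-trees, localize $T$ via Calculus of Claspers (at the cost of $C_{k+1}$-equivalence) so that surgery becomes a connected sum, and identify the summand with $cl_i\sigma'$ by applying the identical localization to the trivial string link. The only cosmetic difference is that the paper gathers the clasper into a tubular neighbourhood of the first strand before closing, whereas you localize onto a trivial arc after closing; the argument is the same.
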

\begin{proof}[Proof of Claim \ref{claimadditivity}]
By \cite[Thm. 3.17]{H}, we have $\si'=(\1_3)_G$, where $G$ is a disjoint union of simple $C_k$-trees for $\1_3$.  Let $i\in\{0,...,4\}$.  Using Calculus of Claspers, we have 
$cl_i(\si\cdot \si')=cl_i(\si\cdot (\1_3)_G )\stackrel{C_{k+1}}{\sim} cl_i(\si\cdot (\1_3)_{G'} )$, 
where $G'$ is a union of $C_k$-trees for $\1_3$ which is contained in a tubular neighbourhood of the first strand.  
Clearly, we have $cl_i(\si\cdot (\1_3)_{G'} )=(cl_i\si)\sharp (cl_i(\1_3)_{G'} )$.  
On the other hand, it can be easily checked that $cl_i\si'=cl_i(\1_3)_{G}\stackrel{C_{k+1}}{\sim} cl_i(\1_3)_{G'}$.  This concludes the proof.  
\end{proof}
It follows from Claim \ref{claimadditivity} and the multiplicativity of the Conway and HOMFLYPT polynomial that $V_3$ is $3$-additive and that $V_4^i$ is a $4$-additive invariant for $i=1,...,7$.  Similar arguments on the closures $K_i$ ($i=1,2,3$) show that each invariant $W_4^i$, $i=1,2,3$ is also $4$-additive.  

Finally, we can use Lemma \ref{calculus}(3) to show the following.  
\begin{claim} \label{claimadd2}
Let $\si\in \mathcal{SL}(2)$, and $\si'\in \mathcal{SL}_k(2)$ for some integer $k\ge 1$.  
Then for $i=1,2$ and for $j=0,...,4$, we have $cl_j(\Delta_i (\si\cdot \si'))\stackrel{C_{k+1}}{\sim} cl_j(\Delta_i\si)\sharp cl_j(\Delta_i\si')$.
\end{claim}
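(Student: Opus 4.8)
The plan is to mimic the proof of Claim \ref{claimadditivity}, replacing the use of \cite[Thm. 3.17]{H} with an analysis of how the doubling operation $\Delta_i$ interacts with claspers, and then pushing the relevant claspers into a tubular neighbourhood of a single strand so that each closure splits as a connected sum. First I would write $\si' = (\1_2)_G$, where $G$ is a disjoint union of simple $C_k$-trees for $\1_2$ (using \cite[Thm. 3.17]{H}). The key point is to understand $\Delta_i(\si \cdot \si')$: taking a $0$-framed parallel copy of the $i$th component turns the $2$-string link into a $3$-string link, and I would argue that $\Delta_i((\1_2)_G)$ is obtained from $\1_3$ by surgery along a clasper $\widetilde{G}$ obtained by suitably doubling $G$ along the parallel copy. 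Here is where Lemma \ref{calculus}(3) enters: whenever a leaf of a component of $G$ grabs the $i$th strand that is being doubled, that leaf must be split along the arc separating the two parallel copies, which by Lemma \ref{calculus}(3) replaces the tree by a product of two $C_k$-trees up to $C_{k+1}$-equivalence, one grabbing each parallel strand. Iterating over all such leaves expresses $\Delta_i(\si')$, up to $C_{k+1}$, as surgery along a union $\widetilde{G}$ of simple $C_k$-trees for the $3$-string link $\Delta_i(\1_2) = \1_3$.

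Once $\Delta_i(\si \cdot \si') \stackrel{C_{k+1}}{\sim} (\Delta_i\si)\cdot (\1_3)_{\widetilde{G}}$ is established, the remainder of the argument parallels Claim \ref{claimadditivity} verbatim. Using Calculus of Claspers (Lemma \ref{calculus}(1)), I would slide the trees of $\widetilde{G}$ off the rest of the diagram and into a tubular neighbourhood of the first strand, obtaining a union $\widetilde{G}'$ of $C_k$-trees with $\Delta_i(\si \cdot \si') \stackrel{C_{k+1}}{\sim} (\Delta_i\si)\cdot (\1_3)_{\widetilde{G}'}$ and $(\1_3)_{\widetilde{G}} \stackrel{C_{k+1}}{\sim} (\1_3)_{\widetilde{G}'}$. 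Applying any closure $cl_j$ ($j = 0,\dots,4$), the fact that $\widetilde{G}'$ is confined to a neighbourhood of one strand gives the splitting $cl_j((\Delta_i\si)\cdot (\1_3)_{\widetilde{G}'}) = cl_j(\Delta_i\si)\sharp cl_j((\1_3)_{\widetilde{G}'})$, exactly as in Claim \ref{claimadditivity}. Finally, noting that $\Delta_i\si' = \Delta_i((\1_2)_G) \stackrel{C_{k+1}}{\sim} (\1_3)_{\widetilde{G}}\stackrel{C_{k+1}}{\sim} (\1_3)_{\widetilde{G}'}$, I can identify $cl_j((\1_3)_{\widetilde{G}'}) \stackrel{C_{k+1}}{\sim} cl_j(\Delta_i\si')$, which yields the claimed relation $cl_j(\Delta_i(\si\cdot\si')) \stackrel{C_{k+1}}{\sim} cl_j(\Delta_i\si)\sharp cl_j(\Delta_i\si')$.

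The main obstacle I anticipate is the first step: carefully justifying that $\Delta_i$ of a clasper surgery is again a clasper surgery, and in particular controlling what happens to leaves that intersect the doubled strand. A leaf meeting the $i$th component once must, after doubling, either intersect both parallel copies or be pushed off one of them; this is precisely the situation governed by Lemma \ref{calculus}(3), and I would need to verify that each such splitting is compatible with the simplicity of the resulting trees and that all the introduced trees remain of degree $k$. The framing being $0$ is what guarantees no extra twisting is introduced in this doubling, so I would emphasize that the $0$-framed parallel copy is what makes the doubled leaves split cleanly into the two parallel-strand cases without generating lower-degree correction terms. Once this doubling-versus-clasper compatibility is pinned down, everything else is a routine application of Calculus of Claspers and the multiplicativity of $\sharp$, exactly mirroring the already-proved Claim \ref{claimadditivity}.
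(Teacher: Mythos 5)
Your proposal is correct and follows essentially the same route as the paper: write $\si'=(\1_2)_G$ for a union of simple $C_k$-trees, observe that doubling the $i$th strand turns $G$ into a clasper for $\1_3$ whose leaves grab both parallel copies, split those leaves via Lemma \ref{calculus}(3) into a union of simple $C_k$-trees for $\1_3$, and then conclude by the argument of Claim \ref{claimadditivity}. The only cosmetic difference is that you re-run the push-to-one-strand argument explicitly, whereas the paper simply invokes Claim \ref{claimadditivity} at that point.
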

\begin{proof}[Proof of Claim \ref{claimadd2}]
As in the previous proof, we have $\si'=(\1_2)_G$ for a disjoint union $G$ of simple $C_k$-trees for $\1_2$.  
For simplicity, we give here the proof on a simple example, namely in the case where $i=1$ and where $G$ is (say) a copy of the $C_4$-tree $s$ represented on the left-hand side of Figure \ref{proofc4fig}.  (The general case is proved by strictly similar arguments).  
 \begin{figure}[!h]
  \includegraphics{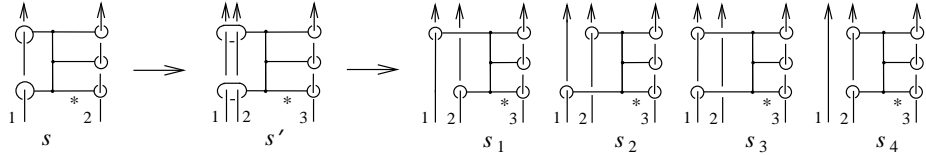}
  \caption{Doubling the first component of $\1_2$ and splitting the leaves. }\label{proofc4fig}
 \end{figure} 
Starting with $s$, doubling the first component of $\1_2$ yields a $C_4$-tree $s'$ for $\1_3$ as illustrated in Figure \ref{proofc4fig}.  We apply Lemma \ref{calculus}(3) repeatedly, to decompose $s'$ into simple $C_4$-trees.  This gives $(\1_3)_{s'}\stackrel{C_5}{\sim} \prod_{1\le i\le 4} (\1_3)_{s_i}$, 
where $s_i$ is a simple $C_4$-tree for $\1_3$ as illustrated in Figure \ref{proofc4fig}, $i=1,2,3,4$. 
The result then follows by Claim \ref{claimadditivity}.
\end{proof}
This claim implies that $f^3_4$, $f^4_4$ and $f^5_4$ are also $4$-additive invariants.   
\subsection{The clasper index}
Let $G$ be a simple $C_k$-graph for an $n$-string link $\si$.  
We call a leaf of $G$ an $i$-leaf if it intersects the $i$th component of $\si$.  
The \emph{index} of $G$ is the collection of all integers $i$ such that $G$ contains an $i$-leaf, counted with multiplicities. For example, a simple $C_3$-tree of index $\{2,3^{(2)},5\}$ for $\si$ intersects twice component $3$ and once components $2$ and $5$ (and is disjoint from all other components of $\si$).  

We will need the following lemma.  
\begin{lemma}\label{index1}
For $k\ge 3$, let $T$ be a simple $C_k$-tree of index $\{i,j^{(k)}\}$ for an $n$-string link $\si$, $1\le i,j\le n$. 
Then $\si_T$ is $C_{k+1}$-equivalent to a string link $\si'$ which is obtained from $\si$ by surgery along $C_k$-trees with index $\{i^{(2)},j^{(k-1)}\}$. 
\end{lemma}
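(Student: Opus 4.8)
The plan is to analyze the single $j$-leaf that appears with multiplicity $k$ and trade one of its $j$-intersections for an $i$-intersection, at the cost of lower-degree (i.e. $C_{k+1}$-level) error terms that are themselves realized by the desired index. Concretely, $T$ is a linear $C_k$-tree whose $k$ leaves meet $\si$ in the pattern $\{i, j^{(k)}\}$, so exactly one leaf is an $i$-leaf and the remaining $k$ leaves are $j$-leaves. I would first invoke the fact that the $i$-leaf can be slid along $\si$ so that it lies adjacent (along the $i$th component, or just in the ambient space) to one of the $j$-leaves, up to $C_{k+1}$-equivalence by Calculus of Claspers (Lemma \ref{calculus}(1)): passing an edge across $\si$ or across another edge of $T$ only changes $\si_T$ up to $C_{k+1}$-equivalence, which is exactly the error we are permitted to ignore.

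The heart of the argument is then a leaf-splitting maneuver. Having positioned the $i$-leaf beside a chosen $j$-leaf $f$, I would apply Lemma \ref{calculus}(3) to split the leaf $f$: choose the splitting arc $\alpha$ so that the band encircling the $j$-strand, together with the nearby $i$-strand, is separated into two sublevels. Because $T$ is simple, the relevant leaf bounds a disk intersecting $\si$ once; the point is to enlarge or reposition this leaf so that it can be split into a piece grabbing the $i$-strand and a piece grabbing the $j$-strand. The split relation of Lemma \ref{calculus}(3) produces, up to $C_{k+1}$-equivalence, a product $\si_{T_1}\cdot \si_{T_2}$ where $T_1$ and $T_2$ are obtained from $T$ by replacing $f$ by its two halves $f_1$, $f_2$. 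By arranging $f_1$ to intersect the $i$th component and $f_2$ to intersect the $j$th component, each of $T_1, T_2$ becomes a simple $C_k$-tree whose index has $i$ appearing with multiplicity $2$ and $j$ with multiplicity $k-1$, i.e. index $\{i^{(2)}, j^{(k-1)}\}$, as desired. The original $i$-leaf of $T$ gets folded into this accounting, so that the net count over the two resulting trees is consistent.

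The main obstacle I anticipate is the bookkeeping needed to guarantee that the split genuinely redistributes the intersection multiplicities to $\{i^{(2)}, j^{(k-1)}\}$ rather than merely producing two copies of the original index. The delicate part is the geometric positioning in the first step: one must bring the lone $i$-leaf into a configuration where splitting a single $j$-leaf simultaneously creates a second $i$-intersection while destroying one $j$-intersection. This amounts to choosing the splitting arc $\alpha$ so that the newly created leaf half wraps around the $i$-strand; ensuring that this is realizable within a regular neighborhood of $T$, and that all the auxiliary moves (sliding the $i$-leaf, isotoping leaves past one another) incur only $C_{k+1}$-corrections, is where the hypothesis $k\ge 3$ and the simplicity of $T$ are used. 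I would present the argument on the linear model of Figure \ref{ckmove}, since the index data is insensitive to the internal tree shape once Calculus of Claspers is available, and the general case follows by the same local manipulation.
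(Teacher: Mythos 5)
Your central maneuver does not work. Lemma \ref{calculus}(3) splits a leaf $f$ by distributing the \emph{existing} intersection points of $f$ with $\si$ between the two halves $f_1$ and $f_2$; it cannot create an intersection that was not already there. Since $T$ is simple, the $j$-leaf you propose to split meets $\si$ in a single point of the $j$th component, so any split of it produces one half meeting the $j$th component once and one half disjoint from $\si$ (and surgery along a tree with a leaf disjoint from the link does nothing). The preparatory step you invoke to fix this --- ``enlarging or repositioning'' the leaf so that its disk also engulfs the $i$th strand --- is not among the permitted moves: Lemma \ref{calculus}(1) allows an \emph{edge} to pass across $\si$ at the cost of a $C_{k+1}$-equivalence, but pushing a strand of $\si$ into or out of a \emph{leaf} changes which strands the clasper grabs and hence changes $\si_T$ by far more than a $C_{k+1}$-equivalence (already for $k=1$ such a move would relocate a crossing change to a different pair of strands). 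Even granting the move, the bookkeeping fails: splitting one $j$-leaf into an $i$-half and a $j$-half would yield one tree of index $\{i^{(2)},j^{(k-1)}\}$ and one of index $\{i,j^{(k)}\}$, so the original index reappears; the sentence about the original $i$-leaf being ``folded into this accounting'' is precisely where the argument is missing.

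The paper's mechanism is different: it trades a $j$-leaf for an $i$-leaf via the STU relation, not via leaf splitting. One first applies STU at two $j$-leaves attached to a common node (after an induction on the number of intervening $j$-leaves along the $j$th strand, each step of which costs a further STU term), fusing them into a loop; this produces a $C_k$-\emph{graph} with one loop and index $\{i,j^{(k-1)}\}$. One then applies the IHX relation to bring the unique $i$-leaf next to the loop and the STU relation at the edge joining the $i$-leaf to the loop, which cuts the loop open into two new leaves on the $i$th strand and yields trees of index $\{i^{(2)},j^{(k-1)}\}$. Your proposal uses neither STU nor IHX and has no graph-with-loop intermediate, so it is not a variant of this argument; the gap identified above is essential and would need to be filled by exactly this kind of STU/IHX manipulation.
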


\begin{proof} 
For simplicity we prove the lemma for $\si=\1_n$.  For an arbitrary $\si$, the proof is strictly similar (using the fact that there exists a tree clasper $C$ such that $\si=(\1_n)_C$).  
Pick a node of $T$ which is connected to two $j$-leaves $f$ and $f'$.  Travelling along the $j$th component of $\1_n$ from $f$ to $f'$, we meet in order $m$ $j$-leaves $f_1$, ..., $f_m$.  
The proof is by induction on the number $m$ of leaves separating $f$ and $f'$.  

If $m=0$, then using Calculus of Claspers we may assume that there exists a $3$-ball which intersects $T$ as on the left-hand side of Figure \ref{fork}.  
By the IHX and STU relations, we have $\1_T\stackrel{C_{k+1}}{\sim} \1_G$, where $G$ is a simple $C_k$-graph with one loop and with index $\{i,j^{(k-1)}\}$ as illustrated in Figure \ref{fork}.  
 \begin{figure}[!h]
  \includegraphics{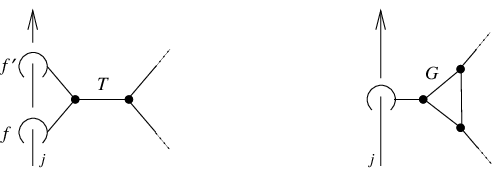}
  \caption{}\label{fork}
 \end{figure}
 We now prove that any simple $C_k$-graph $C$ for $\1_n$ with one loop and with index $\{i,j^{(k-1)}\}$ satisfies 
 \begin{equation}\label{claimloopequ}
 (\1_n)_C\stackrel{C_{k+1}}{\sim} (\1_n)_F,
 \end{equation} 
 where $F$ is a disjoint union of simple $C_k$-trees for $\1_n$ with index $\{i^{(2)},j^{(k-1)}\}$.   
%
In order to prove (\ref{claimloopequ}), observe that the unique $i$-leaf $l$ of $C$ is connected to the loop $\gamma$ of $C$ by a path $P$ of edges and nodes.  
We proceed by induction on the number $n$ of nodes in $P$. 
For $n=0$, applying the STU relation at the edge connecting $l$ to $\gamma$ proves the claim.  
For an arbitrary $n\ge 1$, applying the IHX relation at the edge of $P$ which is incident to $\gamma$ gives $\1_{C}\stackrel{C_{k+1}}{\sim} \1_{C'}\cdot \1_{C''}$, where $C'$ and $C''$ are two simple $C_k$-graphs 
with a unique $i$-leaf connected to a loop by a path with $(n-1)$ nodes.  Equation (\ref{claimloopequ}) then follows from the induction hypothesis.  

Now suppose that $f$ and $f'$ are separated by $m$ $j$-leaves $f_1$, ... , $f_m$ ($m\ge 1$).  We can apply the STU relation at the leaves $f_m$ and $f'$ to obtain that $\1_T\stackrel{C_{k+1}}{\sim} \1_{T'}\cdot \1_{G}$, 
where $T'$ is the $C_k$-tree obtained by sliding $f_m$ over $f'$ (so that the $j$-leaves $f$ and $f'$ of $T'$ are separated by $m-1$ leaves)\footnote{Abusing notation, we still call $f$ and $f'$ the corresponding $j$-leaves of $T'$.  }, and where $G$ is a simple $C_k$-graph with one loop and with index $\{i,j^{(k-1)}\}$.  
The result thus follows from (\ref{claimloopequ}) and the induction hypothesis.  
\end{proof}
%
%
%
%
\section{Proofs of the main results}
In this section we give the proofs of Theorems \ref{c3}, \ref{c4} and \ref{c5}.  
The plan of proof is always the same and as follows.  That $(1)\Rightarrow (2)\Rightarrow (4)$ is clear, so the core of the proof consists in showing that $(4)\Rightarrow (1)$.  This is done by giving an explicit representative for the $C_k$-equivalence class ($k=3,4,5$) of an arbitrary $n$-string link, in terms of the invariants listed in $(4)$.  
That $(3)\Leftrightarrow (2)$ follows from the fact that the group $\mathcal{SL}(n) / C_k$ is torsion-free for $k=3,4,5$, which comes as a consequence of the fact that no torsion element appears in our representative.  

Before proceeding to the proofs, we summarize in Figure \ref{knots}, for the reader's convenience, the various knots that will be used throughout the rest of this section.  (We implicitly define the knots in Figure \ref{knots} as the results of surgery along the represented tree claspers for the unknot $U$).  We will sometimes identify these knots with their images by the monoid isomorphism $\mathcal{L}(1)\simeq \mathcal{SL}(1)$.
Also, for each knot $K$ in Figure \ref{knots} and for any $1\le i\le n$, we will denote by $K_i$ the $n$-string link obtained from $\1_n$ by connected sum of a copy of $K$ on the $i$th component.  
 \begin{figure}[!h]
  \includegraphics{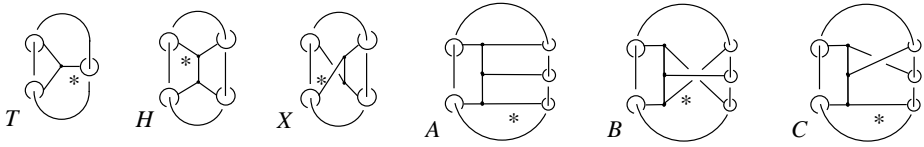}
  \caption{The knots $T$, $H$, $X$, $A$, $B$ and $C$. } \label{knots}
 \end{figure} 
\subsection{Proof of Theorem \ref{c3}} \label{proofc3}
 Let $\si\in \mathcal{SL}(n)$.   
 By Murakami-Nakanishi's characterization of $C_2$-equivalence \cite{MN}, we have that $\si$ is $C_2$-equivalent to 
 \begin{equation}\label{L1}
  \si_{(1)}:=\prod_{1\le i<j\le n} L_{ij}^{\mu_\si(ij)}, 
 \end{equation}
 where $L_{ij}\in \mathcal{SL}(n)$ is obtained by surgery along the $C_1$-tree $l_{ij}$ represented in Figure \ref{c2tree}.  
 So $\sigma$ is obtained from $\si_{(1)}$ by surgery along $C_k$-trees ($k\ge 2$).  By Calculus of Claspers, this implies that $\si\stackrel{C_3}{\sim} \si_{(1)}\cdot \si_{(2)}$, where 
  \begin{equation}\label{L2}
   \si_{(2)}:=
       \prod_{1\le i\le n} T_i^{\alpha_i}\cdot 
       \prod_{1\le i<j\le n} W_{ij}^{\beta_{ij}}\cdot 
       \prod_{1\le i<j<k\le n} B_{ijk}^{\gamma_{ijk}}, 
  \end{equation}
 for some integers $\alpha_i$, $\beta_{ij}$ and $\gamma_{ijk}$, where $T_i$, $W_{ij}$ and $B_{ijk}$ are $n$-string links obtained respectively from $\1_n$ by surgery along the $C_2$-trees $t_i$, $w_{ij}$ and $b_{ijk}$ represented in Figure \ref{c2tree}. 
 \begin{figure}[!h]
  \includegraphics{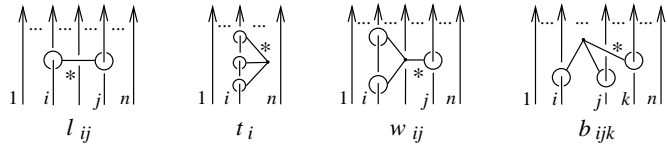}
  \caption{The $C_1$-tree $l_{ij}$ and the $C_2$-trees $t_i$, $w_{ij}$ and $b_{ijk}$. } \label{c2tree}
 \end{figure} 
 Note that the closure of $T_i$, $W_{ij}$ and $B_{ijk}$ is the trefoil, Whitehead link and Borromean rings respectively.  Note also that $W_{ij}=W_{ji}$ (see for example \cite[Fig. 6]{jbjktr}).  
 For $\si\in \mathcal{SL}(n)$, set $(a_2)_i(\si)=a_2(\si_i)$ ($1\le i\le n$) and $(f_2)_{ij}(\si)=f_2(\si_i\cup \si_j)$ ($1\le i,j\le n$).  We have $(a_2)_i(T_j)=\delta_{i,j}$, $(f_2)_{ij}(T_k)=\delta_{i,k}+\delta_{j,k}$, $(f_2)_{ij}(W_{kl})=\delta_{(i,j),(k,l)}$ and $\mu_{B_{abc}}(ijk)=\delta_{(i,j,k),(a,b,c)}$, where $\delta$ denotes the Kronecker delta.  Using the fact that $a_2$, $f_2$ and $\mu(ijk)$ are all $2$-additive, it follows that in (\ref{L2}) we have 
 \bc
 $\alpha_i=(a_2)_i(\si_{(2)})=(a_2)_i(\si)$,\\
 $\beta_{ij}=(f_2)_{ij}(\si_{(2)})-(a_2)_j(\si_{(2)})-(a_2)_i(\si_{(2)})=(f_2)_{ij}(\si)-(a_2)_j(\si)-(a_2)_i(\si)$, \\
 and $\textrm{ }\gamma_{ijk}=\mu_{\si_{(2)}}(ijk)=\mu_{\si}(ijk)$.  
 \ec
 This concludes the proof.  
\subsection{Proof of Theorem \ref{c4}} \label{proofc4}
 Let $\si\in \mathcal{SL}(n)$.   
 From the proof of Theorem \ref{c3}, we have that $\si\stackrel{C_3}{\sim} \si_{(1)}\cdot \si_{(2)}$ where $\si_{(1)}$ and $\si_{(2)}$ are given by (\ref{L1}) and (\ref{L2}) respectively, and the exponents $\alpha_i$, $\beta_{ij}$ and $\gamma_{ijk}$ in (\ref{L2}) are uniquely determined by the invariants $a_2$, $f_2$ and $\mu(ijk)$ of $\si$.  

 It follows, by Calculus of Claspers, that $\si\stackrel{C_4}{\sim} \si_{(1)}\cdot \si_{(2)}\cdot \si_{(3)}$ with 
 $$ \si_{(3)}:=(\1_n)_{G_1}\cdot ( \1_n)_{G_2} \cdot  ... \cdot ( \1_n)_{G_N}, $$
 where, for each $k$, $G_k$ is a simple $C_3$-tree for $\1_n$.  
 Set $G=\sqcup_k G_k$.    
 By Lemma \ref{index1}, we may assume that each $G_k$ in $G$ has index $\{i^{(4)}\}$, $\{i^{(2)},j^{(2)}\}$, $\{i,j,k^{(2)}\}$ or $\{i,j,k,l\}$, for some indices $i,j,k,l\in \n$.  
 Let us consider each of these four cases successively.  \\
 
 \textit{Index $\{i^{(4)}\}$}:  
 Let $F_i\subset G$ denote the union of $C_3$-trees with index $\{i^{(4)}\}$, for each $i$.  By Calculus of Claspers we may assume that $F_i$ lives in a tubular neighbourhood of the $i$th strand of $\1_n$.  Let $H_i$ denote the $n$-string link obtained from $\1_n$ by surgery along the  $C_3$-tree $h_i$ represented in Figure \ref{c3tree}.  The knot obtained by closing the $i$th strand of $H_i$ is the knot $H$ of Figure \ref{knots}.  By \cite{horiuchi} we know that $\mathbf{h}:=P_0^{(3)}(H)$ is nonzero.  It thus follows from Theorem \ref{cnknots}, and the fact that $P_0^{(3)}$ is the only degree $3$ knot invariant, that $(\1_n)_{F_i}\stackrel{C_4}{\sim} (H_i)^{P_0^{(3)}(\si_{(3)}) / \mathbf{h}}$.  
 \begin{figure}[!h]
  \includegraphics{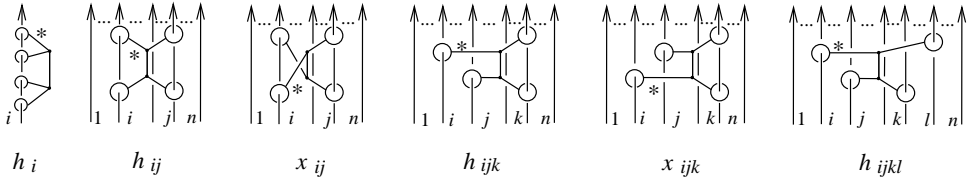}
  \caption{The $C_3$-trees $h_i$, $h_{ij}$, $x_{ij}$, $h_{ijk}$, $x_{ijk}$ and $h_{ijkl}$.  } \label{c3tree}
 \end{figure} 
 
 \textit{Index $\{i^{(2)},j^{(2)}\}$}:
 Fix $i<j\in \n$, and let $F_{ij}\subset G$ denote the union of $C_3$-trees with index $\{i^{(2)},j^{(2)}\}$.  By the AS and IHX relations we may assume that both ends of each $C_3$-tree in $F_{ij}$ are $j$-leaves.  Hence we have 
  \begin{equation} \label{eqc4_1}
   (\1_n)_{F_{ij}}\stackrel{C_4}{\sim} H_{ij}^{a_{ij}}\cdot X_{ij}^{b_{ij}}
  \end{equation}
for some integers $a_{ij}$ and $b_{ij}$, where $H_{ij}$ and $X_{ij}$ denote the $n$-string links obtained respectively from $\1_n$ by surgery along the $C_3$-trees $h_{ij}$ and $x_{ij}$ shown in Figure \ref{c3tree}.  

 A direct computation shows that $\mu_{H_{ij}}(iijj)=\mu_{X_{ij}}(iijj)=2$, and clearly we have $\mu_{(\1_n)_{G_p}}(iijj)=0$ for any $G_p\subset G$ with index $\ne \{i^{(2)},j^{(2)}\}$.  Now, for $\s\in \mathcal{SL}(n)$ and $1\le k<l\le n$, set $(f_3(\si))_{k,l}:=f_3(\si_k\cup \si_l)=P_0^{(3)}(\overline{\si_k\cup \si_l})$.  By \cite{horiuchi}, we have 
 \[ \textrm{$(f_3(H_{ij}))_{k,l}=\delta{(i,j),(k,l)}\cdot \mathbf{h}$ and $(f_3(X_{ij}))_{k,l}=0$.} \]  
 Also, we have $(f_3(H_i))_{k,l}=(\delta_{i,k} + \delta_{i,l})\cdot \mathbf{h}$, and $(f_3((\1_n)_{G_p}))_{k,l}=0$ for any $G_p\subset G$ with index other than $\{k^{(4)}\}$, $\{l^{(4)}\}$ or $\{k^{(2)},l^{(2)}\}$.  
 It follows that the integers $a_{ij}$ and $b_{ij}$ in (\ref{eqc4_1}) are uniquely determined by the invariants $P^{(3)}_0$,  $f_3$ and $\mu(iijj)$ of $\si_{(3)}$.  \\

 \textit{Index $\{i,j,k^{(2)}\}$}:    
  Fix $i,j,k\in \n$ with $i<j$, and let $F_{ijk}\subset G$ denote the union of $C_3$-trees with index $\{i,j,k^{(2)}\}$.  By the AS and IHX relation we may assume that 
  $$ (\1_n)_{F_{ijk}}\stackrel{C_4}{\sim} H_{ijk}^{\alpha_{ijk}}\cdot X_{ijk}^{\beta_{ijk}} $$
 for some integers $\alpha_{ijk}$ and $\beta_{ijk}$, where $H_{ijk}$ and $X_{ijk}$ denote the $n$-string links obtained respectively from $\1_n$ by surgery along the $C_3$-trees $h_{ijk}$ and $x_{ijk}$ represented in Figure \ref{c3tree}.  
  Note that $H_{ijk}$ and $X_{ijk}$ correspond to the string links $V_{id}(3)$ and $\overline{V_{id}}(3)$ defined for Lemma \ref{flipping} respectively (using Lemma \ref{calculus}~(2) for the second one).  Thus by Lemma \ref{flipping}, the union $F_{(3)}:=\cup_{i,j,k} F_{ijk}$ of all $C_3$-trees in $G$  intersecting $3$ strands of $\1_n$ satisfies 
  \begin{equation} \label{eqc4_2}
   (\1_n)_{F_{(3)}}\stackrel{C_4}{\sim} \prod_{1\le i<j<k\le n} (H_{jki})^{a_{ijk}}\cdot (H_{ikj})^{b_{ijk}}\cdot (H_{ijk})^{c_{ijk}}\cdot (X_{ijk})^{d_{ijk}}
  \end{equation}
 for some integers $a_{ijk}$, $b_{ijk}$, $c_{ijk}$ and $d_{ijk}$.  
 
 We have $\mu_{H_{ijk}}(ijkk)=\mu_{X_{ijk}}(ijkk)=1$ for all $1\le i,j,k\le n$ with $i<j$, and $\mu_{(\1_n)_{G_p}}(ijkk)=0$ for any $G_p\subset G$ with index $\ne \{i,j,k^{(2)}\}$.   

 For $\s\in \mathcal{SL}(n)$ and $1\le i<j<k\le n$, set $cl^{ijk}_3(\si):=cl_3(\si_i\cup \si_j\cup \si_k)$ and $(V_3)_{ijk}(\si):=V_3(cl^{ijk}_3(\si))$.  
 We have 
 \[ \textrm{$cl^{ijk}_3(X_{ijk})\stackrel{C_5}{\sim} H$ and $cl^{ijk}_3(H_{ikj})\stackrel{C_5}{\sim} cl^{ijk}_3(H_{jki})\stackrel{C_5}{\sim} cl^{ijk}_3(H_{ijk})\stackrel{C_5}{\sim} X$,} \]
  where $H$ and $X$ are the two knots represented in Figure \ref{knots}.  Note that by the IHX relation we have $X\stackrel{C_4}{\sim} U$.  More generally, we compute the closures $cl^{ijk}_3$ and invariants $(V_3)_{ijk}$ of the relevant $n$-string links in Table \ref{t0}.  
\begin{table}[!h]
\begin{center}
\begin{tabular}{|c|c|c|}
\hline
 $\si$ & $cl^{ijk}_3(\si) / C_4$ & $(V_3)_{ijk}(\si) / \mathbf{h}$ \rule[0pt]{-5pt}{10pt} \\[0.1cm] 
\hline
$H_a$ & $H^{\delta_{a,i}}\cdot H^{\delta_{a,j}}\cdot H^{\delta_{a,k}}$ & $\delta_{a,i}+\delta_{a,j}+\delta_{a,k}$ \rule[0pt]{-5pt}{10pt} \\[0.1cm] 
\hline	 
$H_{ab}$ ($a<b$) & $H^{\delta_{(a,b),(i,j)}}\cdot H^{\delta_{(a,b),(j,k)}}\cdot X^{\delta_{(a,b),(i,k)}}$ & $\delta_{(a,b),(i,j)}+\delta_{(a,b),(j,k)}$ \rule[0pt]{-5pt}{10pt} \\[0.1cm] 
\hline	
$X_{ab}$ ($a<b$) & $H^{\delta_{(a,b),(i,k)}}\cdot X^{\delta_{(a,b),(i,j)}}\cdot X^{\delta_{(a,b),(j,k)}}$ & $\delta_{(a,b),(i,k)}$ \rule[0pt]{-5pt}{10pt} \\[0.1cm] 
\hline	
$H_{abc}$ ($a<b<c$) & $X^{\delta_{(a,b,c),(i,j,k)}}$ & $0$ \rule[0pt]{-5pt}{10pt} \\[0.1cm]
\hline	
$X_{abc}$ ($a<b$) & $H^{\delta_{(a,b,c),(i,j,k)}}$ & $\delta_{(a,b,c),(i,j,k)}$ \rule[0pt]{-5pt}{10pt} \\[0.1cm]  
\hline	
\end{tabular}\\[0.1cm]
\caption{ } \label{t0}
\end{center}
\end{table} 

It follows that all exponents in (\ref{eqc4_2}) are uniquely determined by the invariants $P^{(3)}_0$, $f_3$, $V_3$ and $\mu(ijkk)$ ($1\le i, j,k\le n$ ; $i<j$) of $\si_{(3)}$. \\
 
 \textit{Index $\{i,j,k,l\}$}: 
 By the IHX and AS relations, we may assume that the $k$-leaf and $l$-leaf of any $C_3$-tree $C\subset G$ with index $\{i,j,k,l\}$ ($i<j<k<l$) are its two ends.  
More precisely, the union $F_{(4)}\subset G$ of all $C_3$-trees intersecting $4$ distinct components of $\1_n$ satisfies 
$$ (\1_n)_{F_{(4)}}\stackrel{C_4}{\sim} \prod_{\substack{1\le i<j<k<l\le n}} 
(H_{ijkl})^{\mu_{\si_{(3)}}(ijkl)}\cdot (H_{jikl})^{\mu_{\si_{(3)}}(jikl)}, $$
where $H_{ijkl}$ denotes the $n$-string link obtained from $\1_n$ by surgery along the $C_3$-trees $h_{ijkl}$ represented in Figure \ref{c3tree}.  This follows from the fact that $\mu_{H_{ijkl}}(i'j'k'l')=\delta_{(i,j,k,l),(i',j',k',l')}$ \cite{Milnor,HM}.  
So we have shown that $\si_{(3)}$ is $C_4$-equivalent to 
 \begin{equation}\label{L3}
 \prod_{i} (H_i)^{a_i}\cdot 
 \prod_{i<j} (H_{ij})^{a_{ij}}\cdot (X_{ij})^{b_{ij}}\cdot 
 \prod_{\substack{i<j \\ k}} (H_{ijk})^{a_{ijk}}\cdot 
 \prod_{i<j<k} (X_{ijk})^{b_{ijk}}\cdot 
 \prod_{\substack{i,j \\ k<l}} (H_{ijkl})^{a_{ijkl}},
 \end{equation}
where the exponents are integers determined uniquely by the invariants $P_0^{(3)}$, $f_3$ and $V_3$, and Milnor invariants $\mu(iijj)$ ($1\le i<j\le n$), $\mu(ijkk)$ ($1\le i,j,k\le n$ ; $i<j$) and $\mu(ijkl)$ ($1\le i,j< k< l\le n$) of $\si_{(3)}$.   
The result follows from the fact that all the above-listed invariants are $3$-additive.  
\begin{remark}
It appears from the proof of Theorem \ref{c4} (case of index $\{i^{(2)},j^{(2)}\}$ trees) that we can replace, in the statement, `all' invariants $V_3$ of $\sigma$ and $\sigma'$ by (only) the invariants $(V_3)_{ijk}$ for $1\le i<j<k\le n$.  
Indeed, only those, among all invariants $(V_3)_{ijk}$, are used to determine the value of the exponents in (\ref{eqc4_2}).  
\end{remark}
\subsection{Proof of Theorem \ref{c5}} \label{proofc5}
Before proving Theorem \ref{c5} we investigate individually the case of $n$-string links for $n=1$, $2$, $3$ and $4$. 
We start by reviewing briefly the case $n=1$, that is, the knot case.  
\subsubsection{The knot case}\label{subknot}
It is well known that there exists essentially two linearly independent finite type knots invariants of degree $4$, namely $a_4$ and $P_0^{(4)}$.  

For an element $\alpha$ of the symmetric group $S_3$, denote by $K_\alpha$ the knot obtained from the unknot $U$ by surgery along the $C_4$-tree $k_\alpha$ represented in Figure \ref{figc4_1}.  
Note that $K_{id}$, $K_{(13)}$ and $K_{(12)}$ are the three knots $A$, $B$ and $C$ illustrated in Figure \ref{knots}.\footnote{Here, and in the rest of the paper, we denote by $id$ the identity element of the symmetric group.  }    
By the AS and IHX relations, the abelian group $\mathcal{SL}_4(1) / C_5$ is generated by these sixelements $K_\alpha$, $\alpha\in S_3$.  
Further, by using the IHX and STU relations we observe that 
\begin{equation}\label{knotc4}
K_{(12)}\stackrel{C_5}{\sim} K_{(23)}\stackrel{C_5}{\sim} U\quad \textrm{ and }\quad K_{(13)}\stackrel{C_5}{\sim} K_{(123)}\stackrel{C_5}{\sim} K_{(132)}.  
\end{equation}
(In particular, we have that the knot $C$ of Figure \ref{knots} satisfies $C\stackrel{C_5}{\sim} U$.)  
This shows that $\mathcal{SL}_4(1) / C_5$ is generated by the two knots $A=K_{id}$ and $B=K_{(13)}$ of Figure \ref{knots}.   
By using \cite{OY} and \cite{horiuchi}, we have that 
\beQ
a_4{(A)} = 0 & \textrm{ and } & a_4{(B)} = \pm 2, \\
P^{(4)}_0{(A)} = \pm 4!.2^4 & \textrm{ and } & P^{(4)}_0{(B)} = 0.  
\eeQ
Set $\mathbf{a}:=P^{(4)}_0{(A)}$ and $\mathbf{b}:=a_4{(B)}$.  The $C_5$-equivalence class of a knot $K$ is thus determined by its degree $\le 4$ invariants as follows 
$$ K\stackrel{C_5}{\sim} T^{a_2(K)}\cdot H^{P^{(3)}_0(K) / \mathbf{h}}\cdot A^{P^{(4)}_0(K) / \mathbf{a}}\cdot B^{a_4(K) / \mathbf{b}},$$
where $T$ and $H$ are given in Figure \ref{knots}.  
\subsubsection{The $2$-component case}\label{sub2comp}
We aim to prove the following particular case of Theorem \ref{c5}. 
\begin{lemma}\label{claimc4_2}
Let $\si$, $\si'\in \mathcal{SL}_4(2)$. Then $\si$ and $\si'$ are $C_{5}$-equivalent if and only if they share all knots invariants of degree $4$ and the five invariants $f^i_4$ ($i=1,...,5$).  
\end{lemma}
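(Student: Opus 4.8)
The plan is to follow the same strategy used in the proofs of Theorems \ref{c3} and \ref{c4}: produce an explicit representative for the $C_5$-equivalence class of an arbitrary $\si\in \mathcal{SL}_4(2)$ in terms of the listed invariants, so that the class is determined by those invariants. Since $\si\in \mathcal{SL}_4(2)$, it is $C_4$-trivial, so by \cite[Thm. 3.17]{H} it is obtained from $\1_2$ by surgery along a disjoint union $G=\sqcup_k G_k$ of simple $C_4$-trees. By Calculus of Claspers, $\si\stackrel{C_5}{\sim}\prod_k (\1_2)_{G_k}$, and I may analyze each tree according to its index. For a $2$-string link a simple $C_4$-tree has $5$ leaves, so the possible indices are $\{1^{(5)}\}$, $\{2^{(5)}\}$, $\{1^{(4)},2\}$, $\{1,2^{(4)}\}$, $\{1^{(3)},2^{(2)}\}$ and $\{1^{(2)},2^{(3)}\}$. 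Using Lemma \ref{index1} (with $k=4$) I would first reduce the two ``unbalanced'' indices $\{1^{(4)},2\}$ and $\{1,2^{(4)}\}$ to index $\{1^{(2)},2^{(3)}\}$ and $\{1^{(3)},2^{(2)}\}$ respectively, so that only the one-component indices and the $\{1^{(3)},2^{(2)}\}$, $\{1^{(2)},2^{(3)}\}$ indices remain.

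Next I would treat each index class. For the one-component indices $\{1^{(5)}\}$ and $\{2^{(5)}\}$ I would use the knot case of Subsection \ref{subknot}: by Calculus of Claspers the corresponding trees may be pushed into a tubular neighbourhood of a single strand, and the resulting knot-type contribution on the $i$th component is determined up to $C_5$ by its degree-$4$ knot invariants $a_4$ and $P_0^{(4)}$, giving contributions of the form $A_i^{P_0^{(4)}(\si_i)/\mathbf{a}}\cdot B_i^{a_4(\si_i)/\mathbf{b}}$. For the balanced indices $\{1^{(3)},2^{(2)}\}$ and $\{1^{(2)},2^{(3)}\}$, I would use the AS and IHX relations (as in the index $\{i^{(2)},j^{(2)}\}$ analysis of Theorem \ref{c4}) to reduce each to a short, explicit list of generators of the relevant subgroup of $\mathcal{SL}_4(2)/C_5$, obtained by attaching the degree-$4$ tree shapes of Figure \ref{figc4_1} to the two strands. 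The crux is then a linear-algebra computation: I would tabulate the values of $f^1_4,\dots,f^5_4$ on these generators and show the resulting matrix has full rank, so that the exponents are uniquely determined by the $f^i_4$ together with the degree-$4$ knot invariants of the components.

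Concretely, I expect $f^1_4=a_4(\overline{\si})$ and $f^2_4=P_0^{(4)}(\overline{\si})$ (plat closure) to detect the generators coming from the ``interleaved'' trees, while $f^3_4,f^4_4,f^5_4$, which evaluate $a_4$ and $P_0^{(4)}$ on closures of the doubled string links $\Delta_i\si$, detect the remaining generators; the doubling is precisely what converts a $\{1^{(3)},2^{(2)}\}$-type tree into something whose closure-knot is a nontrivial degree-$4$ knot. The \textbf{if} direction is immediate since all invariants involved are $C_5$-invariants, so the content is the \textbf{only if} direction, i.e.\ realizing that these five invariants plus the component knot invariants separate the generators. Here one uses that $f^i_4$ are $4$-additive (established in Subsection \ref{add}), so their values are additive over the product decomposition and the computation reduces to evaluating each invariant on each single generator.

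The main obstacle I anticipate is the explicit evaluation step: computing $a_4$ and $P_0^{(4)}$ on the various plat and cabled closures of the generating string links, and checking that the resulting system is nondegenerate so that all exponents are recovered. This requires knowing the closure-knot types (up to $C_5$) of each generator, which in turn relies on careful clasper manipulations analogous to Table \ref{t0} in the proof of Theorem \ref{c4}, together with the known values $\mathbf{a}=P_0^{(4)}(A)$, $\mathbf{b}=a_4(B)$ and $\mathbf{h}=P_0^{(3)}(H)$ from \cite{OY,horiuchi}. Once this table is assembled, invertibility of the coefficient matrix yields the desired representative and hence the lemma.
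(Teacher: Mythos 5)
Your proposal follows essentially the same route as the paper's proof: decompose $\si$ by the index of the $C_4$-trees, handle the single-component indices via the knot case of Subsection \ref{subknot}, reduce the mixed indices $\{1^{(3)},2^{(2)}\}$ and $\{1^{(2)},2^{(3)}\}$ to an explicit generating set of $\mathcal{SL}_4(2)/C_5$, and then check that the five invariants $f^i_4$ (together with the degree-$4$ knot invariants of the components) separate the generators via a full-rank table of values, which is exactly the content of Tables \ref{t1} and \ref{t2}. The only point you underplay (and your swap of the targets of Lemma \ref{index1} for the indices $\{1^{(4)},2\}$ and $\{1,2^{(4)}\}$ is immaterial) is that cutting the twelve candidate generators $\si^1_\alpha$, $\si^2_\alpha$ ($\alpha\in S_3$) down to exactly five requires STU relations passing through $C_4$-graphs with loops, not only AS and IHX; this is precisely the mechanism the paper uses before the linear-algebra step.
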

\noindent Note that there is no nontrivial Milnor invariant of length $5$ for $2$-string links \cite{Milnor2}.  
\begin{proof}[Proof of Lemma \ref{claimc4_2}]
Let $\si\in \mathcal{SL}_4(2)$.   
By Calculus of Claspers and Subsection \ref{subknot}, 
$$ \si\stackrel{C_5}{\sim} 
\prod_{i=1}^2\left( A^{P^{(4)}_0(K) / \mathbf{a}}\cdot B^{a_4(K) / \mathbf{b}} \right)\cdot (\mathbf{1}_2)_{F} $$  
where $F$ is a disjoint union of simple $C_4$-trees for $\mathbf{1}_2$ with at least one $1$-leaf and one $2$-leaf.  Note that by Lemma \ref{index1}, we may assume that each $C_4$-tree in $F$ has index $\{1^{(2)},2^{(3)}\}$ or $\{1^{(3)},2^{2)}\}$.  
It follows, by the IHX relation, that the abelian group $\mathcal{SL}_4(2) / C_5$ is generated by $A_i$, $B_i$ ($i=1,2$) and the $2$-string links $\si^1_\alpha$ and $\si^2_\alpha$ obtained from $\mathbf{1}_2$ by surgery along the $C_4$-trees $s^1_\alpha$ and $s^2_\alpha$ represented in Figure \ref{figc4_1} ($\alpha\in S_3$).   
 \begin{figure}[!h]
  \includegraphics{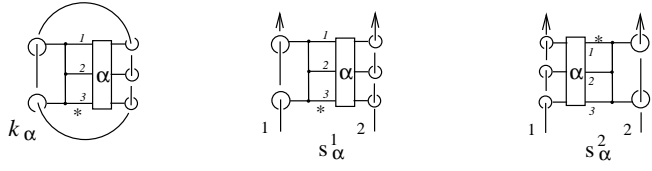}
  \caption{The $C_4$-trees $k_\alpha$, $s^1_\alpha$ and $s^2_\alpha$ ; $\alpha\in S_3$. } \label{figc4_1}
 \end{figure} 

We can use the AS and STU relations to prove the following relations ($i=1,2$): 
\beQ
 \si^i_{(12)}\stackrel{C_5}{\sim} \si^i_{id}\cdot (\1_2)_{g_0} & , & \si^i_{(23)}\stackrel{C_5}{\sim} \si^i_{id}\cdot (\1_2)_{g_1}, \\
 \si^i_{(123)} \stackrel{C_5}{\sim} \si^i_{(23)}\cdot (\1_2)_{g_2} & , & \si^i_{(132)} \stackrel{C_5}{\sim} \si^i_{(12)}\cdot (\1_2)_{g_2}, \\
 \si^i_{(13)} & \stackrel{C_5}{\sim} & \si^i_{(123)}\cdot (\1_2)_{g_3}, 
\eeQ
where $g_k$ ($k=0,1,2,3$) is the $C_4$-graph represented in Figure \ref{c4tr}.  By applying the STU relation at an edge of $g_k$ that connects its loop to a $2$-leaf, we can express $(\1_2)_{g_k}$ as a product of $s^1_\alpha$'s.  So the relations above imply that for any $\alpha\in S_3\setminus \{Id\}$, the string link $s^2_\alpha$ is generated in $\mathcal{SL}_4(2) / C_5$ by $s^2_{id}$ and the $s^1_\alpha$'s.  Further, one can easily check using the IHX relation that $(\1_2)_{g_0}\stackrel{C_5}{\sim} (\1_2)_{g_1}$.  This implies that $\si^1_{(12)}\stackrel{C_5}{\sim} \si^1_{(23)}$, and thus (by the above relations) that $\si^1_{(123)}\stackrel{C_5}{\sim} \si^1_{(132)}$. \\
So $\mathcal{SL}_4(2) / C_5$ is generated by the five elements $\si^1_{id}$, $\si^1_{(12)}$, $\si^1_{(123)}$, $\si^1_{(13)}$ and $\si^2_{id}$.   
\begin{center}
\begin{figure}[!h]
\includegraphics{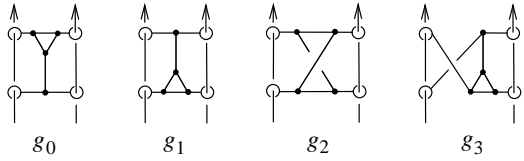}
\caption{The $C_4$-graphs $g_i$ ; $i=0,1,2,3$.} \label{c4tr}
\end{figure}
\end{center}

We introduced in Subsection \ref{statec4} three different ways of closing a $2$-string link $\si$ into a knot, namely by taking the plat closure $\overline{\si}$ and by taking the closure $cl_0$ (resp. $cl_1$) in Figure \ref{threeclose} of the $3$-string link $\Delta_1\sigma$, resp. $\Delta_2\sigma$, obtained from $\si$ by doubling the first, resp. second component.   
In particular if $\si \in \mathcal{SL}_4(2) / C_5$, the resulting knot is an element of $\mathcal{SL}_4(1) / C_5$, and can be expressed in terms of the generators $A$ and $B$ given in Subsection \ref{subknot}.  We collect the results in Table \ref{t1}.  This is straightforward for the plat closure case, and uses the fact that the knot $C$ given in Figure \ref{knots} satisfies $C\stackrel{C_5}{\sim} U$.  (This fact is also used for Table \ref{t3}.)  
For the two latter cases, the computations make use of Calculus of Claspers, and in particular it makes use of Lemma \ref{calculus}(3).  
\begin{table}[!h]
\begin{center}
\begin{tabular}{|c|ccccc|}
\hline
      $\si$            & $\si^1_{id}$ & $\si^1_{(12)}$ & $\si^1_{(123)}$ & $\si^1_{(13)}$ & $\si^2_{id}$ \\[0.1cm] 
\hline
$\overline{\si} / C_5$       &     $A$    &      $U$       &      $B$      &      $B$     &     $A$     \\[0.1cm]  
$cl_0(\Delta_1\sigma) / C_5$ &$A^3\cdot B^{-1}$& $B^{-1}$&     $B^3$     &$A^{-1}\cdot B^3$&$A^4\cdot 2B$   \\[0.1cm]  
$cl_1(\Delta_2\sigma) / C_5$ & $A^4\cdot B^2$& $A\cdot B^2$ & $A\cdot B^6$  & $B^6$ &$A^3\cdot B^{-1}$    \\
\hline	 
\end{tabular}\\[0.1cm]
\caption{ } \label{t1}
\end{center}
\end{table}
For example, let us explain here the computation for $cl_0 (\Delta_1\si^1_{id})$.  Observe that $s^1_{id}$ is precisely the $C_4$-tree represented on the left-hand side of Figure \ref{proofc4fig}.  So, doubling the first component of $\1_2$ yields the $C_3$-tree $s'$ for $\1_3$ represented in this figure, which as shown in Subsection \ref{add} satisfies $(\1_3)_{s'}\stackrel{C_5}{\sim} \prod_{1\le i\le 4} (\1_3)_{s_i}$, where $s_i$ is as illustrated in Figure \ref{proofc4fig} ($i=1,2,3,4$).  
Using Claim \ref{claimadditivity}, we have 
 $$ cl_0\left(\prod_{i=1}^4 (\1_3)_{s_i}\right)\stackrel{C_5}{\sim} \prod_{i=1}^4 cl_0\left((\1_3)_{s_i}\right). $$
By an isotopy, we see that $cl_0\left((\1_3)_{s_1}\right)=U_{k'}$, where $k'$ is a $C_4$-tree for $U$ represented in Figure \ref{proofc4_3}.  As shown there, we have $U_{k'}\stackrel{C_5}{\sim} (U_{s_{(13)}})^{-1}=B^{-1}$ using Lemma \ref{calculus}(2).  
\begin{center}
\begin{figure}[!h]
\includegraphics{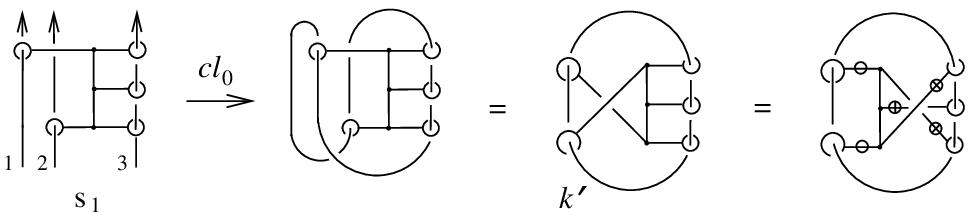}
\caption{The proof that $cl_0\left((\1_3)_{s_1}\right)\stackrel{C_5}{\sim} B^{-1}$.} \label{proofc4_3}
\end{figure}
\end{center}
For $i=2,3,4$, a simple isotopy shows that $cl_0\left((\1_3)_{s_i}\right)=A$.  

Recall that $\mathbf{a}=P^{(4)}_0{(A)}$ and $\mathbf{b}=a_4{(B)}$.  Table \ref{t2} follows immediately from the definitions of the invariants $f^i_4$ ($i=1,...,5$) and the computations given in Table \ref{t1}.  
\begin{table}[!h]
\begin{tabular}{|c|ccccc|}
\hline
      $\si$              & $\si^1_{id}$ & $\si^1_{(12)}$ & $\si^1_{(123)}$ & $\si^1_{(13)}$ & $\si^2_{id}$ \\[0.1cm] 
\hline
$f^1_4(\si) / \mathbf{b}$&      0     &      0       &      $1$      &     $1$      &      0      \\[0.1cm]  
$f^2_4(\si) / \mathbf{a}$&     $1$    &      0       &       0       &      0       & $1$\\[0.1cm] 
$f^3_4(\si) / \mathbf{b}$&    $-1$    &    $-1$      &      $3$      &     $3$      &$2$\\[0.1cm] 	 
$f^4_4(\si) / \mathbf{a}$&     $3$    &      0       &       0       &     $-1$     &$4$\\[0.1cm]  
$f^5_4(\si) / \mathbf{a}$&     $4$    &     $1$      &      $1$      &       0      &$3$\\[0.1cm] 
\hline	 
\end{tabular}\\[0.1cm]
\caption{} \label{t2}
\end{table}
The $5\times 5$ matrix given by the entries of Table \ref{t2} having rank $5$, we obtain that the five invariants $f^i_4$ ($i=1,...,5$) (together with the knot invariants $a_4$ and $P^{(3)}_0$) do classify the abelian group $\mathcal{SL}_4(2) / C_5$, thus completing the proof of the lemma. 
\end{proof}

\begin{remark}
The number of linearly independent finite type $2$-string link invariants of degree $4$ has been computed by Bar-Natan \cite{BNcalc}.  In particular, there are $10$ linearly independent such invariants which do not have a factor coming from a single knot component, see \cite[\S 2.3.4]{BNcalc}.
Half of them come from products of lower degrees invariants (namely $\mu(12)^4$, $\mu(12)^2 f_2$, $\mu(12) f_3$, $\mu(12) \mu(1122)$ and $(f_2)^2$) and the remainning five are the invariants $f^i_4$ ($i=1,...,5$).  
%
\end{remark} 
\subsubsection{The $3$-component case}\label{sub3comp}
In this subsection we prove the following lemma.   
\begin{lemma}\label{claimc4_3}
Let $\si$, $\si'\in \mathcal{SL}_4(3)$. Then $\si$ and $\si'$ are $C_{5}$-equivalent if and only if they share all knots invariants of degree $4$, all invariants $f^i_4$ ($1\le i\le 5$), all invariants $V^j_4$ ($1\le j\le 7$), and all Milnor invariants $\mu(iiijk)$ and $\mu(ijjkk)$ ($1\le i,j,k\le n$ ; $j<k$). 
\end{lemma}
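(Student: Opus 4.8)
The plan is to follow the exact template used in the proofs of Theorems \ref{c3} and \ref{c4}, now one degree higher. Given $\si\in \mathcal{SL}_4(3)$, I would first apply Calculus of Claspers together with the results of Subsection \ref{subknot} to pull out, on each of the three strands individually, a factor of the form $A_i^{P^{(4)}_0(\si_i)/\mathbf{a}}\cdot B_i^{a_4(\si_i)/\mathbf{b}}$ recording the degree-$4$ knot invariants. This reduces the problem to studying $(\1_3)_F$, where $F$ is a disjoint union of simple $C_4$-trees for $\1_3$, each having at least two distinct indices among $\{1,2,3\}$. The key structural input is Lemma \ref{index1}: it lets me normalize so that every $C_4$-tree in $F$ has one of the admissible index types. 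For a $C_4$-tree (five leaves) touching at least two strands, the possible indices are, up to permutation, $\{i^{(4)},j\}$, $\{i^{(3)},j^{(2)}\}$, $\{i^{(3)},j,k\}$, $\{i^{(2)},j^{(2)},k\}$, and $\{i^{(2)},j,k,\ast\}$-type configurations involving all three strands with multiplicity — so effectively $\{i^{(3)},j,k\}$ and $\{i^{(2)},j^{(2)},k\}$ for the genuinely tripartite ones, since with only three strand-labels the fifth leaf forces a repeat.

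\medskip
I would then treat each index type as a separate case, exactly mirroring the four-case analysis in Subsection \ref{proofc4}. For each type, I would use the AS and IHX relations to produce a small explicit generating set of $n$-string links (analogues of the $H_{ij}$, $X_{ij}$, $H_{ijk}$, $X_{ijk}$ of Figure \ref{c3tree}), and use Lemma \ref{flipping} to collapse redundancies coming from the choice of which strand carries the ``base'' of the tree. The invariants doing the separating work are: the degree-$4$ knot invariants $a_4,P^{(4)}_0$ on single strands; the five invariants $f^i_4$ applied to pairs $\si_i\cup\si_j$ (these, by Lemma \ref{claimc4_2}, already classify the purely two-strand contributions of index types $\{i^{(3)},j^{(2)}\}$ and $\{i^{(2)},j^{(3)}\}$); the seven invariants $V^j_4$ evaluated via the closures $cl_1,\dots,cl_4$ of $\si$ itself, which detect the genuinely three-strand $C_4$-trees; and the length-$5$ Milnor invariants $\mu(iiijk)$ and $\mu(ijjkk)$, which are the only ones of length $5$ surviving for $3$-string links after the reductions. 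For each generating string link I would compute the vector of all these invariants (assembled into tables analogous to Tables \ref{t1} and \ref{t2}) and check that the resulting matrix has full rank, so that the listed invariants determine all exponents uniquely. Here the $k$-additivity results of Subsection \ref{add} (Claims \ref{claimadditivity} and \ref{claimadd2}, the $4$-additivity of $f^i_4$, $V^j_4$) are essential to justify that these invariants split as expected over the clasper decomposition.

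\medskip
I expect the main obstacle to be the three-strand cases — indices $\{i^{(3)},j,k\}$ and $\{i^{(2)},j^{(2)},k\}$ — where the generating set before applying Lemma \ref{flipping} is large (the several permutations $\alpha\in S_3$ analogous to the $K_\alpha$ and $\si^i_\alpha$ of Figure \ref{figc4_1}), and where I must verify carefully that the seven $V^j_4$ together with the Milnor invariants $\mu(iiijk),\mu(ijjkk)$ really suffice to pin down every exponent. Concretely, the hard part is the linear-algebra bookkeeping: computing each of the five closures $cl_0,\dots,cl_4$ of each generator modulo $C_5$ (reducing to the knots $A,B,C$ of Figure \ref{knots}, using $C\stackrel{C_5}{\sim}U$), then evaluating $a_4$ and $P^{(4)}_0$ to fill in a large table and confirming its rank equals the number of generators. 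A subtle point requiring attention is ensuring that no torsion appears in the representative, since the equivalence $(2)\Leftrightarrow(3)$ in Theorem \ref{c5} relies on $\mathcal{SL}(3)/C_5$ being torsion-free; I would confirm this by checking that each generator is detected by an integer-valued invariant with no divisibility obstruction. Once the rank condition is verified in every case, the lemma follows by the same argument as Lemma \ref{claimc4_2}, producing an explicit normal form for the $C_5$-equivalence class of $\si$ determined by the listed invariants.
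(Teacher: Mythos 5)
Your overall plan is the paper's plan: absorb the single-strand and two-strand contributions using Subsections \ref{subknot} and \ref{sub2comp}, reduce to unions of simple $C_4$-trees of index $\{i,j,k^{(3)}\}$ and $\{i,j^{(2)},k^{(2)}\}$, cut the resulting generating set down by clasper relations, and then verify that the matrix of values of $V^j_4$ and the length-$5$ Milnor invariants on the surviving generators has full rank, concluding by $4$-additivity. That matches the paper step for step.

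The one concrete weak point is the tool you cite for the reduction of the tripartite generators. Lemma \ref{flipping} concerns $C_k$-trees for $\1_k$ (two leaves on one component and one on each of the remaining $k-1$), so for $C_4$-trees it lives on \emph{four}-string links and does not literally apply to the index-$\{i,j^{(2)},k^{(2)}\}$ trees on $\1_3$; indeed the paper only invokes it in the $4$-component case. This matters quantitatively: AS and IHX leave you with $18$ generators of that index type ($U_\alpha,U'_\alpha,U''_\alpha$, $\alpha\in S_3$), while you have only $10$ detecting invariants available (seven $V^j_4$ plus three $\mu(ijjkk)$), so no rank computation can succeed until you prove enough relations to reach exactly $10$ generators. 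The paper does this by hand: five relations of ``flipping'' type obtained by applying STU in two ways to looped $C_4$-graphs $c_0,\dots,c_5$, two more from the graphs $d_1,d_2$, and one further relation (\ref{equU}) whose proof needs the separate Lemma \ref{remstu} (an STU-type identity whose right-hand side is a \emph{disconnected} union of trees, hence outside the scope of Lemma \ref{asihxstu}). Your sketch gestures at this step but supplies neither the count nor the relations, and the lemma you name would not produce them; everything else in your outline (the role of Claim \ref{claim} reducing the $\{i,j,k^{(3)}\}$ family to three generators detected by $\mu(iiijk)$, the closure computations modulo $C_5$ reducing to $A$, $B$, $C\stackrel{C_5}{\sim}U$, and the final rank check) is as in the paper.
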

\begin{proof}[Proof of Lemma \ref{claimc4_3}]
Let $\si\in \mathcal{SL}_4(3)$.   
By Calculus of Claspers and subsections \ref{subknot} and \ref{sub2comp} above, 
\begin{equation}\label{eq3comp}
\si\stackrel{C_5}{\sim} \tilde{\si}\cdot (\mathbf{1}_3)_{E}\cdot (\mathbf{1}_3)_{F}, 
\end{equation}
where $\tilde{\si}$ is determined uniquely by the invariants $a_4$, $P_0^{(4)}$ and $f^i_4$ ($1\le i\le 5$) of $\si$, and where $E$, resp. $F$, is a disjoint union of simple $C_4$-trees for $\mathbf{1}_3$ with index $\{i,j^{(2)},k^{(2)}\}$, resp.  $\{i,j,k^{(3)}\}$ ($1\le i,j,k\le n$).  

For $\alpha\in S_3$, denote by $U_\alpha$, $U'_\alpha$, $U''_\alpha$  $V_\alpha$, $V'_\alpha$ and $V''_\alpha$ the $3$-string links obtained from $\mathbf{1}_3$ by surgery along the $C_4$-trees $u_\alpha$, $u'_\alpha$, $u''_\alpha$, $v_\alpha$, $v'_\alpha$ and $v''_\alpha$ represented in Figure \ref{figc4_3}.  
 \begin{figure}[!h]
  \includegraphics{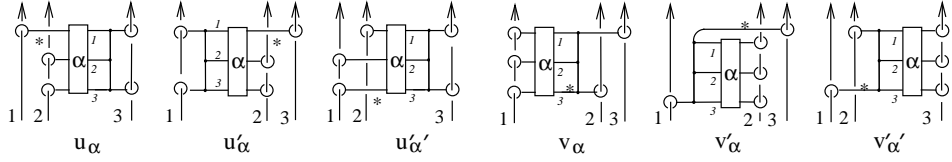}
  \caption{The $C_4$-trees $u_\alpha$, $u'_\alpha$, $u''_\alpha$, $v_\alpha$, $v'_\alpha$ and $v''_\alpha$.} \label{figc4_3}
 \end{figure} 

Set $\mathbb{U}:=\{U_\alpha, U'_\alpha, U''_\alpha \textrm{ $|$ } \alpha\in S_3\}$ and  $\mathbb{V}:=\{V_\alpha, V'_\alpha, V''_\alpha \textrm{ $|$ } \alpha\in S_3\}$.  
By Calculus of Claspers and the AS and IHX relations, we have that the $C_5$-equivalence class of $(\mathbf{1}_3)_E$, resp. of $(\mathbf{1}_3)_F$, is generated by $\mathbb{U}$, resp. by $\mathbb{V}$.  So (\ref{eq3comp}) can be rewritten as 
\begin{equation} \label{eq3comp_2}
\si\stackrel{C_5}{\sim} \tilde{\si}\cdot \prod_{\alpha\in S_3} (U_\alpha)^{m_\alpha}\cdot (U'_\alpha)^{m'_\alpha}\cdot (U''_\alpha)^{m''_\alpha}\cdot (V_\alpha)^{n_\alpha}\cdot (V'_\alpha)^{n'_\alpha}\cdot (V''_\alpha)^{n''_\alpha}
\end{equation}
for some integers $m_\alpha$, $m'_\alpha$, $m''_\alpha$, $n_\alpha$, $n'_\alpha$ and $n''_\alpha$.

We first consider the set $\mathbb{V}\subset \mathcal{SL}_4(3)$.  We have the following
\begin{claim}\label{claim}
Any element $L$ of $\mathbb{V}$ satisfies $L\stackrel{C_5}{\sim} L^{(1)}\cdot L^{(2)}$, where $L^{(1)}$ is obtained from $\mathbf{1}_3$ by surgery along $C_4$-trees of index $\{i,j^{(2)},k^{(2)}\}$ and where $L^{(1)}$ is generated by the elements 
\beQ
I_1:=V_{id}, & I_2:=V'_{id}, & I_3:=V''_{id}.
\eeQ
\end{claim}
\begin{proof}[Proof of Claim \ref{claim}]
Let us consider the case of the $6$ elements $V_\alpha$ ($\alpha\in S_3$).  
For $\alpha$ = $(12)$, the STU relation gives 
$V_{(12)} \stackrel{C_5}{\sim} I_1\cdot (\mathbf{1}_3)_{c_4}$, where $c_4$ is the $C_4$-graph represented in Figure \ref{figc4_4}.  The claim thus follows, by using the STU relation to express $(\mathbf{1}_3)_{c_3}$ as a product $(\mathbf{1}_3)_T\cdot (\mathbf{1}_3)_{T'}$ for two $C_4$-trees $T$ and $T'$ with index $\{1^{(2)},2,3^{(2)}\}$.  
The same argument can be applied for any $\alpha\in S_3$, as $V_\alpha$ is related to $V_{id}$ by successive applications of the STU relation.  
By symmetry, the case of elements $V'_\alpha$ and $V''_\alpha$ is also strictly similar.  
\end{proof}

Now, observe that the $3$-string links $I_l$, $l=1,2,3$, are distinguished by Milnor invariants.  More precisely, for all $1\le i,j,k\le 3$ with $j<k$, we have $\mu_{I_l}(iiijk) = \pm \delta_{i,l}$.  Note also that $\mu_{I_l}(ijjkk) = 0$.   

So there remains to classify the $18$ elements of $\mathbb{U}\subset \mathcal{SL}_4(3)$.  
The following relations among elements of $\mathbb{U}$ can be proved using the AS, IHX and STU relations.  
\beQ
U_{(123)}\cdot (U_{(13)})^{-1} & \stackrel{C_5}{\sim} & U''_{id}\cdot (U''_{(23)})^{-1}, \\ 
U_{id}\cdot (U_{(23)})^{-1} & \stackrel{C_5}{\sim} & U''_{(123)}\cdot (U''_{(13)})^{-1}, \\ 
U'_{(123)}\cdot (U'_{(13)})^{-1} & \stackrel{C_5}{\sim} & (U''_{id})^{-1}\cdot U''_{(12)}\cdot U''_{(132)}\cdot (U''_{(13)})^{-1}, \\ 
U'_{id}\cdot (U'_{(23)})^{-1} & \stackrel{C_5}{\sim} & (U''_{(123)})^{-1}\cdot U''_{(12)}\cdot U''_{(132)}\cdot (U''_{(23)})^{-1}, \\ 
U_{id}\cdot (U_{(12)})^{-1}\cdot (U_{(132)})^{-1}\cdot U_{(13)} & \stackrel{C_5}{\sim} & U'_{(123)}\cdot (U'_{(12)})^{-1}\cdot (U'_{(132)})^{-1}\cdot U'_{(23)}. 
\eeQ
More precisely, the first relation is obtained as follows.  Consider the $C_4$-graph $c_0$ represented in Figure \ref{figc4_4}.  By applying the STU relation to the edge incident to the $1$-leaf of $c_0$, we have $(\1_n)_{c_0}\stackrel{C_5}{\sim} (U''_{id})^{-1}\cdot U''_{(23)}$.  Now, it follows from the IHX relation that $(\1_n)_{c_0}\stackrel{C_5}{\sim} (\1_n)_{c_1}$, where $c_1$ is shown in Figure \ref{figc4_4}, and on the other hand the STU relation can be used to show that $(\1_n)_{c_1}\stackrel{C_5}{\sim} (U_{(123)})^{-1}\cdot U_{(13)}$, which implies the desired relation.  The next four relations are proved strictly similarly by using respectively the $C_4$-graphs $c_2$, $c_3$, $c_4$ and $c_5$ of Figure \ref{figc4_4} in place of $c_1$.  
 \begin{figure}[!h]
  \includegraphics{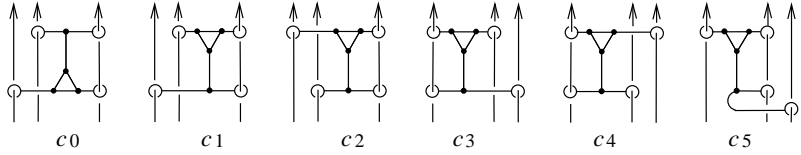}
  \caption{The $C_4$-graphs $c_i$, $1\le i\le 5$.} \label{figc4_4}
 \end{figure} 

Also, we can use the $C_4$-graphs $d_1$ and $d_2$ of Figure \ref{figc4_5} in a similar way (that is, by applying the STU relation in two different way) to obtain the additional two relations. 
\beQ
U''_{(12)}\cdot U''_{(132)} & \stackrel{C_5}{\sim} & U_{(12)}\cdot U_{(132)} \\
U''_{(12)}\cdot (U''_{(132)})^{-1} & \stackrel{C_5}{\sim} & U'_{(12)}\cdot (U'_{(132)})^{-1} , 
\eeQ

Finally let us show that 
\begin{equation}\label{equU}
U''_{(132)} \stackrel{C_5}{\sim} (U_{(12)})^{-1}\cdot U'_{(12)}.  
\end{equation}
To prove (\ref{equU}), we need the following lemma, which can be easily derived from the proof of \cite[Prop. 4.4]{H}. 
\begin{lemma}\label{remstu}
Let $G_S$ be a $C_k$-graph for $\1_n$, and let $G_T$ and $G_U$ be the unions of two \emph{tree} claspers which differ from $G_S$ only in a small ball as depicted in Figure \ref{asihxstu_fig}, where the two leaves of $G_T$, resp. $G_U$, are from different components.  
Then $(\1_n)_{G_S} \stackrel{C_{k+1}}{\sim} \left( (\1_n)_{G_T}\right)^{-1}\cdot (\1_n)_{G_U}$,  
where $\left((\1_n)_{G_T}\right)^{-1}$ denotes the (formal) inverse of $(\1_n)_{G_T}$ in the abelian group  $\mathcal{SL}_k(n) / C_{k+1}$.  
\end{lemma}
Observe that, by the AS relation and Calculus of Claspers, we have $U''_{(132)} \stackrel{C_5}{\sim} \1_T$, where $T$ is the $C_4$-tree represented in Figure \ref{figc4_5}.  
 \begin{figure}[!h]
  \includegraphics{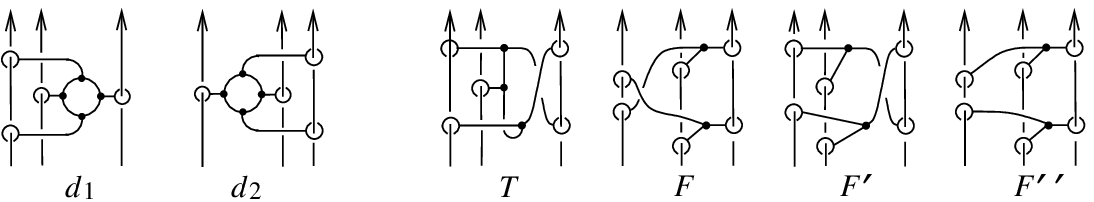}
  \caption{
  } \label{figc4_5}
 \end{figure} 
We have 
\[ \1_T \stackrel{C_5}{\sim} \1_F\cdot \left(\1_{F'}\right)^{-1}\stackrel{C_5}{\sim} \1_F\cdot (\1_{F''})^{-1}\cdot \1_{F''}\cdot \left(\1_{F'}\right)^{-1}\stackrel{C_5}{\sim} (U_{(12)})^{-1}\cdot U'_{(12)}, \]
where $F$, $F'$ and $F''$ are as shown in Figure \ref{figc4_5}.  Here, the first and third equivalence follow from Lemma \ref{remstu} and isotopies, and $(\1_{F'})^{-1}$, resp. $(\1_{F''})^{-1}$, denotes the inverse of $\1_{F'}$, resp. $\1_{F''}$, in $\mathcal{SL}(4) / C_5$.  

So we obtain that $\mathbb{U}$ is generated by the following $10$ elements: $U''_{id}$, $U''_{(12)}$, $U''_{(123)}$, $U''_{(23)}$, $U''_{(13)}$, $U_{id}$, $U_{(12)}$, $U_{(123)}$, $U'_{id}$ and $U'_{(12)}$.  In order to show that they are linearly independent, we make use of Milnor invariants $\mu(ijjkk)$ ($1\le i,j,k\le 3$ ; $j<k$) and the invariants $V^i_4$ ($i=1,...,7$) defined in Subsection \ref{statec5}.  We compute the $C_5$-equivalence classes of the closures $cl_j$ ($j=1,...,4$) of the $10$ elements listed above in a similar way as for Table \ref{t1}.  These computations are summarized in Table \ref{t3}.  
\begin{table}[!h]
\begin{tabular}{|c|ccccc|ccc|cc|}
\hline
$\si$       & $U''_{id}$ & $U''_{(12)}$ & $U''_{(123)}$ & $U''_{(23)}$ & $U''_{(13)}$ & $U_{id}$ & $U_{(12)}$ & $U_{(123)}$ & $U'_{id}$ & $U'_{(12)}$ \\[0.1cm] 
\hline
$cl_1(\si) / C_5$ & $A^{-1}$ & $U$ & $B^{-1}$ & $U$ & $B^{-1}$ & $B$ & $B$ & $U$ & $B$ & $U$ \\[0.1cm]  
$cl_2(\si) / C_5$ & $B$ & $U$ & $A$ & $B$ & $U$ & $A$ & $U$ & $B$ & $U$ & $B$ \\[0.1cm]  
$cl_3(\si) / C_5$ & $U$ & $B$ & $B$ & $A$ & $B$ & $B^{-1}$ & $U$ & $A^{-1}$ & $A^{-1}$ & $U$ \\[0.1cm]  
$cl_4(\si) / C_5$ & $B$ & $B$ & $U$ & $B$ & $A$ & $A^{-1}$ & $U$ & $B^{-1}$ & $B^{-1}$ & $U$ \\
\hline	 
\end{tabular}\\[0.1cm]
\caption{ 
 } \label{t3}
\end{table}

From Table \ref{t3} and the definitions of the invariants, we obtain the desired computations, as given in Table \ref{t4}.  The matrix given by this table has rank $10$, which shows that any element generated by $\mathbb{U}$ is uniquely determined by the invariants listed in the table. 
\begin{table}[!h]
\begin{tabular}{|c|ccccc|ccc|cc|}
\hline
$\si$       & $U''_{id}$ & $U''_{(12)}$ & $U''_{(123)}$ & $U''_{(23)}$ & $U''_{(13)}$ & $U_{id}$ & $U_{(12)}$ & $U_{(123)}$ & $U'_{id}$ & $U'_{(12)}$ \\[0.1cm] 
\hline
$\mu_{\si}(12233)$ & $0$ & $0$ & $0$ & $0$ & $0$ & $1$ & $0$ & $-1$ & $0$ & $0$ \\[0.1cm]  
$\mu_{\si}(32211)$ & $0$ & $0$ & $0$ & $0$ & $0$ & $0$ & $0$ & $0$ & $-1$ & $0$ \\[0.1cm]  
$\mu_{\si}(21133)$ & $1$ & $0$ & $-1$ & $1$ & $-1$ & $0$ & $0$ & $0$ & $0$ & $0$ \\[0.1cm]  
\hline	
$V^1_4(\si) / \mathbf{b}$ & $0$ & $0$ & $-1$ & $0$ & $-1$ & $1$ & $1$ & $0$ & $1$ & $0$ \\[0.1cm]  
$V^2_4(\si) / \mathbf{b}$ & $1$ & $0$ & $0$ & $1$ & $0$ & $0$ & $0$ & $1$ & $0$ & $1$ \\[0.1cm]  
$V^3_4(\si) / \mathbf{b}$ & $0$ & $1$ & $1$ & $0$ & $1$ & $-1$ & $0$ & $0$ & $0$ & $0$ \\[0.1cm] 
$V^4_4(\si) / \mathbf{a}$ & $-1$ & $0$ & $0$ & $0$ & $0$ & $0$ & $0$ & $0$ & $0$ & $0$ \\[0.1cm]  
$V^5_4(\si) / \mathbf{a}$  & $0$ & $0$ & $1$ & $0$ & $0$ & $1$ & $0$ & $0$ & $0$ & $0$ \\[0.1cm]  
$V^6_4(\si) / \mathbf{a}$ & $0$ & $0$ & $0$ & $1$ & $0$ & $0$ & $0$ & $-1$ & $-1$ & $0$ \\[0.1cm] 
$V^7_4(\si) / \mathbf{a}$  & $0$ & $0$ & $0$ & $0$ & $1$ & $-1$ & $0$ & $0$ & $0$ & $0$ \\
\hline	 
\end{tabular}\\[0.1cm]
\caption{ } \label{t4}
\end{table}
It follows that Milnor invariants $\mu_{\si}(iiijk)$ and $\mu_{\si}(ijjkk)$ ($1\le i,j,k\le 3$ , $j<k$), and the invariants $V^i_4$ ($1\le i\le 7$) of $\si$ determine uniquely all the exponents in (\ref{eq3comp_2}).  
The lemma then follows from the $4$-additivity of these invariants.  
\end{proof}
\subsubsection{The $4$-component case}\label{sub4comp}
Let $\si\in \mathcal{SL}_4(4)$.  We proceed as in the previous subsections to construct a representative of the $C_5$-equivalence class of $\s$. 

By Calculus of Claspers and subsections \ref{subknot} to \ref{sub3comp}, we have 
$$ \si\stackrel{C_5}{\sim} \tilde{\si}\cdot (\mathbf{1}_4)_{P} $$ 
where $\tilde{\si}$ is uniquely determined by the invariants of $\si$ listed in Lemma \ref{claimc4_3}, and where $P$ is a disjoint union of simple $C_4$-trees for $\mathbf{1}_4$ with index $\{i,j,k,l^{(2)}\}$ ($1\le i,j,k,l\le 4$).  By the IHX relation, we may assume that each $C_4$-tree in $P$ is linear and that its ends are the two $l$-leaves.  
Recall from Subsection \ref{calculus} that $\mathcal{B}_4(k)$ ($1\le k\le 4$) is the set of all bijections $\tau$ from $\{ 1,...,3 \}$ to $\{1,...,4\} \setminus \{k \}$ such that $\tau(1)<\tau(3)$.  
By Lemma \ref{flipping}, there exists integers $m_\alpha$ ($\alpha\in \mathcal{B}_4(4)$) and $m_{\alpha,k}$ ($1\le k\le 4$ and $\alpha\in \mathcal{B}_4(k)$) such that 
\begin{equation}\label{c4equation}
 (\1_4)_P\stackrel{C_5}{\sim} \prod_{\alpha\in \mathcal{B}_4(4)} (\ov{B_{\alpha}}(4))^{m_{\alpha}}\cdot \prod_{k=1}^4 \prod_{\alpha\in \mathcal{B}_4(k)} (B_{\alpha}(k))^{m_{\alpha,k}}, 
\end{equation}
where the string links $B_{\alpha}(k)$ and $\ov{B_{\alpha}}(k)$ are defined in Figure \ref{flipping_fig}.  

For $\tau\in \mathcal{B}_4(k)$, set $\mu_{\tau} := \mu(\tau(1),\tau(2),\tau(3),k,k)$.  Then for any $1\le l\le 4$ and $\eta \in \mathcal{B}_4(l)$, we have (see \cite[\S 4]{MY}): 
 $$\mu_{\tau} (B_{\alpha}(k))=\mu_{\tau} (\ov{B_{\alpha}}(k))=\delta_{\alpha,\tau}.  $$
Observe that, by definition, $\mathcal{B}_4(4)$ is just the subgroup $\{ id,(12),(23)\}$ of $S_3$.  
One can check that the closures $K_iC_j$ and $K_iC'_j$ of the six $4$-string links $B_{\alpha}(4)$, $\ov{B_{\alpha}}(4)$ ($\alpha\in \mathcal{B}_4(4)$) are given in Table \ref{t5} ($1\le i,j\le 3$).  
\begin{table}[!h]
\begin{tabular}{|c|cccccc|}
\hline
$\si$ & $B_{id}(4)$ & $B_{(12)}(4)$ & $B_{(23)}(4)$ & $\ov{B_{id}}(4)$ & $\ov{B_{(12)}}(4)$ & $\ov{B_{(23)}}(4)$ \rule[-7pt]{0pt}{20pt} \\[0.1cm] 
\hline
$K_1(\si) / C_5$ & $B^{-1}$ & $B^{-1}$ & $B^{-1}$ & $A$ & $U$ & $U$ \\[0.1cm]  
$K_2(\si) / C_5$ & $B^{-1}$ & $B^{-1}$ & $U$ & $U$ & $A$ & $B$ \\[0.1cm]  
$K_3(\si) / C_5$ & $B^{-1}$ & $U$ & $B^{-1}$ & $U$ & $B$ & $A$ \\
\hline	 
\end{tabular}\\[0.1cm]
\caption{ 
 } \label{t5}
\end{table}
We thus obtain the values of the invariants $\mu_{\tau}$ ($\tau\in \mathcal{B}_4(4)$) and  $W^i_4$ ($i=1,2,3$) as in Table \ref{t6}
\begin{table}[!h]
\begin{tabular}{|c|cccccc|}
\hline 
$\si$       & $B_{id}(4)$ & $B_{(12)}(4)$ & $B_{(23)}(4)$ & $\ov{B_{id}}(4)$ & $\ov{B_{(12)}}(4)$ & $\ov{B_{(23)}}(4)$ \rule[-7pt]{0pt}{20pt} \\[0.1cm] 
\hline
$\mu_\si(12344)$ & $1$ & $0$ & $0$ & $1$ & $0$ & $0$ \\[0.1cm]  
$\mu_\si(21344)$ & $0$ & $1$ & $0$ & $0$ & $1$ & $0$ \\[0.1cm]  
$\mu_\si(13244)$ & $0$ & $0$ & $1$ & $0$ & $0$ & $1$ \\[0.1cm] 
\hline	 
$W^1_4(\si) / \mathbf{a}$ & $0$ & $0$ & $0$ & $1$ & $0$ & $0$ \\[0.1cm]  
$W^1_4(\si) / \mathbf{a}$ & $0$ & $0$ & $0$ & $0$ & $1$ & $0$ \\[0.1cm]  
$W^1_4(\si) / \mathbf{a}$ & $0$ & $0$ & $0$ & $0$ & $0$ & $1$ \\
\hline	 
\end{tabular}\\[0.1cm]
\caption{ 
 } \label{t6}
\end{table}
Clearly, the matriw given by the entries of Table \ref{t6} has rank $6$.  
This implies that all exponents in (\ref{c4equation}) are uniquely determined by Milnor invariants $\mu_\si(jikll)$ ($1\le i,j,k,l\le 4$ ; $j<k$) and all invariants $W^i_4$ of $\si$ ($i=1,2,3$), and thus proves the result by the $4$-additivity of these invariants.  
\subsubsection{Proof of Theorem \ref{c5}}\label{subproof}
We now prove Theorem \ref{c5} in the general case.  

 Given $\si\in \mathcal{SL}(n)$, we know from the proof of Theorem \ref{c3} that $\si$ is $C_4$-equivalent to $\si_{(2)}\cdot \si_{(3)}$, where $\si_{(2)}$ and $\si_{(3)}$ are given by (\ref{L2}) and (\ref{L3}) respectively. 

 By Calculus of Claspers, $\si\stackrel{C_5}{\sim} \si_{(2)}\cdot \si_{(3)}\cdot \si_{(4)}$, where $\si_{(4)}$ is obtained from $\1_n$ by surgery along a union of $C_4$-trees.  More precisely, 
  $$ \si_{(4)}\stackrel{C_5}{\sim} \prod_{i=1}^{5}\si^i_{(4)}, $$
where for each $i=1,...,5$, the $n$-string link $\si^i_{(4)}$ is obtained from $\1_n$ by surgery along a union of $C_4$-trees that each intersect $i$ distinct components of $\1_n$.  

By subsections \ref{subknot} to \ref{sub4comp}, we can determine explicitly $\si^i_{(4)}$ for $1\le i\le 4$ using all invariants $a_4$, $P^{(4)}_0$, $f^i_4$ ($1\le i\le 5$), $V^j_4$ ($1\le j\le 7$) and $W^k_4$ ($1\le k\le 3$) of $\si$, and all Milnor invariants $\mu_\si(iiijk)$, $\mu_\si(ijjkk)$ and $\mu_\si(jikll)$ ($1\le i,j,k,l\le n$ ; $j<k$).  (Using the fact that all these invariants are $4$-additive).  

Now, it is easy to see that Milnor invariants $\mu(ijklm)$ ($1\le i,j,k<l<m\le n$) do classify $n$-string links of the form $(\1_n)_{T}$ for $T$ a $C_4$-tree intersecting $5$ distinct components of $\1_n$.  Indeed, if $T$ has index $I=\{i,j,k,l,m\}$ ($1\le i,j,k<l<m\le n$), we may assume by the IXH relation that $T$ is linear, and that the ends are the $l$-leaf and $m$-leaf.  Then for every multi-index $I'=i'j'k'l'm'$ ($1\le i',j',k'<l'<m'\le n$) we have $\mu_{\1_{T}}(I') = \pm \delta_{I,I'}$, see \cite{Milnor2,HM}.  
Since these Milnor invariants are $4$-additive, the proof is completed.  
\section{Finite type concordance invariants}\label{fticonc}
In this section, we define the equivalence relation on string links generated by $C_k$-moves and concordance, called $C_k$-concordance.  We show that finite type concordance invariants classify string links up to $C_k$-concordance for $k\le 6$.  
\subsection{$C_k$-concordance}
Recall that two $n$-string links $\sigma, \sigma'$ are \emph{concordant} if there is an embedding 
$$ f: \left(\sqcup_{i=1}^n I_i\right)\times I \longrightarrow \left(D^2\times I\right)\times I $$  
such that $f\left( (\sqcup_{i=1}^n I_i)\times \{ 0 \} \right)=\sigma$ and $f\left( (\sqcup_{i=1}^n I_i)\times \{ 1 \} \right)=\sigma'$, and such that $f\left(\partial(\sqcup_{i=1}^n I_i)\times I\right)=(\partial \sigma) \times I$.   
String link concordance is an equivalence relation, and is denoted by $\stackrel{c}{\sim}$. 

In order to study finite type concordance invariants, it is natural to consider the following.  
\begin{definition}
Let $k,n\ge 1$ be integers.  
Two $n$-string links $\sigma, \sigma'$ are \emph{$C_k$-concordant} if there is a sequence 
$\sigma=\sigma_0,\sigma_1, ... ,\sigma_m=\sigma'$ such that 
 for each $i\ge 1$, either $\sigma_i\stackrel{C_{k+1}}{\sim} \sigma_{i+1}$ 
or $\sigma_i\stackrel{c}{\sim} \sigma_{i+1}$.  
We denote the $C_k$-concordance relation by $\stackrel{C_k+c}{\sim}$. 
\end{definition} 

Clearly, two $C_k$-concordant string links share all finite type concordance invariants of degree less than $k$.   
It is thus natural to ask the following.  
\begin{question} \label{question}
Let $\sigma$, $\sigma'\in \mathcal{SL}(n)$.  Do we have 
\[ \textrm{$\sigma\stackrel{C_k+c}{\sim} \sigma'$ $\Leftrightarrow$ 
They share all finite type concordance invariants of degree $<k$ ?} \]
\end{question}
We give a positive answer to this question for $k\le 6$ in Subsection \ref{ckcsl}.  

It is known that Milnor invariants are concordance invariants \cite{casson}.  
So by \cite[Thm. 7.1]{H}, $\mu(J)$ is a $C_k$-concordance invariant for any $J$ with $|J|\leq k$. 
Habegger and Masbaum showed that all \emph{rational} finite type concordance invariants of 
string links are given by Milnor invariants via the Kontsevich integral \cite{HMa}.  
\subsection{The ordered index}
In order to study $C_k$-concordance for string links, we use the notion of ordered index of a $C_k$-tree.  

\begin{definition}\label{order-index} 
Let $t$ be a linear $C_k$-tree with ends $f_0,f_k$. 
Since $t$ is a disk, we can travel from $f_0$ to $f_k$ along $\partial t$ so that 
we meet all other leaves $f_1,...,f_{k-1}$ in this order.
If $f_s$ is an $i_s$-leaf $(s=0,...,k)$, we can consider two vectors
$(i_0,...,i_k)$ and $(i_k,...,i_0)$ and may assume that 
$(i_0,...,i_k)\leq(i_k,...,i_0)$, where \lq$\leq$\rq~  is the lexicographic order in ${\Bbb Z}^{k+1}$.
We call $(i_0,...,i_k)$ the {\em ordered index} of $t$ and denote it by o-index$(t)$. 
\end{definition}

By Calculus of Claspers and AS, IHX, STU relations, we have the following.
\begin{lemma} \label{o-index}
(1)~Let $t$ and $t'$ be linear $C_k$-trees for $\1_n$ 
with same ordered index.  Then there are $C_k$-graphs $g_1,...,g_m$ with loops such that $(\1_n)_{t'}\stackrel{C_{k+1}}{\sim} (\1_n)_t^{\varepsilon}\cdot\prod_i(\1_n)_{g_i}$
for some $\varepsilon=\pm 1$.

(2)~Let $t$ be a linear $C_k$-tree $(k\geq 3)$ for $\1_n$ with o-index$(t)=(i_0,...,i_k)$. 
If $i_0=i_1$ or $i_{k-1}=i_k$, then there are $C_k$-graphs $g_1,...,g_m$ with loops such that 
$(\1_n)_{t}\stackrel{C_{k+1}}{\sim} \prod_i(\1_n)_{g_i}$.

(3)~Let $t$ be a linear $C_k$-tree $(k\geq 2)$ for $\1_n$ with o-index$(t)=(i_0,...,i_k)$.  If $(i_0,...,i_k)=(i_k,...,i_0)$ and $k$ is even, then there are $C_k$-graphs $g_1,...,g_m$ with loops such that $((\1_n)_{t})^2\stackrel{C_{k+1}}{\sim} \prod_i(\1_n)_{g_i}$.
\end{lemma}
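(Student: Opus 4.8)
The plan is to track, for a linear $C_k$-tree $t$, how the class of $(\1_n)_t$ in the abelian group $\mathcal{SL}_k(n)/C_{k+1}$ (abelian since $k+1\le 2k$) is affected by the three elementary clasper moves: sliding a leaf along a strand, inserting a half-twist on an edge, and reversing the cyclic orientation at a node. By Calculus of Claspers (Lemma \ref{calculus}) together with the AS, IHX and STU relations (Lemma \ref{asihxstu}), each such move alters $(\1_n)_t$ only by an inversion and/or by multiplication by surgeries along $C_k$-graphs with a loop. The three assertions then reduce to bookkeeping of these two kinds of corrections.

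For (1), I start from $t'$ and transport its leaves along the strands until they occupy the same positions on the same components as those of $t$. Sliding a leaf past a leaf on a \emph{different} component is free up to $C_{k+1}$ by Lemma \ref{calculus}(1); sliding it past a leaf on the \emph{same} component costs, by the STU relation, a factor given by surgery along a $C_k$-graph with a loop, exactly as in the mechanism of Lemmas \ref{flipping} and \ref{index1}. Matching the two possible traversal directions --- legitimate because the ordered index is defined only up to reversal --- is achieved by reversing the cyclic orientation at each of the $k-1$ nodes, which by the AS relation contributes the sign $(-1)^{k-1}$ together with further loop terms. Any residual discrepancy between $t$ and the transported $t'$ is a difference of framings, i.e. half-twists on edges, each of which replaces $(\1_n)_t$ by its inverse via Lemma \ref{calculus}(2). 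Since $\mathcal{SL}_k(n)/C_{k+1}$ is abelian, all sign contributions collapse into a single $\varepsilon=\pm1$ and the loop terms assemble into $\prod_i(\1_n)_{g_i}$, which is (1).

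For (2), suppose $i_0=i_1$ (the case $i_{k-1}=i_k$ being symmetric, using the last node $v_{k-1}$ in place of the first). The node $v_1$ adjacent to the end $f_0$ carries the two leaves $f_0$ and $f_1$, both lying on component $i_0$. I first slide $f_0$ along this component until it is adjacent to $f_1$, disposing of the intervening same-component leaves one at a time by the STU relation, each producing a $C_k$-graph with a loop, exactly as in the induction on $m$ in the proof of Lemma \ref{index1}. Once $f_0$ and $f_1$ are adjacent and share the node $v_1$, I apply the IHX relation at the internal edge $v_1v_2$ followed by the STU relation, as in the $m=0$ case of Lemma \ref{index1} (see Figure \ref{fork}); this rewrites $(\1_n)_t$ as surgery along a single $C_k$-graph with one loop. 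The hypothesis $k\ge 3$ enters precisely here, guaranteeing the existence of the internal edge $v_1v_2$ required by the IHX relation. Collecting the loop terms gives $(\1_n)_t\stackrel{C_{k+1}}{\sim}\prod_i(\1_n)_{g_i}$.

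For (3), the palindrome condition $i_s=i_{k-s}$ makes the \emph{flip} of the linear tree --- the rotation exchanging its two ends --- a symmetry of the leaf-labelling. After sliding the leaves into positions symmetric about the centre of $t$ (at the cost of $C_k$-graphs with a loop whenever same-component leaves are crossed), this flip is realised by an ambient isotopy of a ball containing $t$, and hence leaves $(\1_n)_t$ unchanged up to loop terms. On the other hand the flip reverses the cyclic orientation at each of the $k-1$ nodes, so by the AS relation it also sends $(\1_n)_t$ to $(\1_n)_t^{(-1)^{k-1}}$ up to loops; since $k$ is even, $(-1)^{k-1}=-1$. Equating the two descriptions of the flipped tree in the abelian group $\mathcal{SL}_k(n)/C_{k+1}$ yields $\bigl((\1_n)_t\bigr)^2\stackrel{C_{k+1}}{\sim}\prod_i(\1_n)_{g_i}$. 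The main obstacle throughout is exactly this sign bookkeeping: one must check that the geometric flip (resp. the reversal of traversal direction in (1)) induces \emph{precisely} the AS sign $(-1)^{k-1}$ modulo loops --- in particular that the accompanying reversals of edge-framings and the repositioning of leaves contribute only loop terms and no further signs --- and that all inversions can be consolidated, using commutativity, into the single exponent claimed in each statement.
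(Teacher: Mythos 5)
Your argument follows the paper's proof in all essentials: parts (1) and (2) are handled exactly as in the paper by leaf slides (each costing a $C_k$-graph with loop via STU) together with the IHX/STU mechanism from the proof of Lemma \ref{index1}, and part (3) by the $180$-degree flip of the palindromic planar tree, yielding $(\1_n)_t\stackrel{C_{k+1}}{\sim}(\1_n)_t^{-1}$ modulo loop terms. The only cosmetic difference is that you account for the sign in (3) through the AS relation at the $k-1$ nodes, whereas the paper accounts for it through the half-twists acquired by the $k+1$ edges incident to leaves (Lemma \ref{calculus}(2)); since $(-1)^{k-1}=(-1)^{k+1}$, the two bookkeepings agree.
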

Before proving this lemma, we need the following definition.  
A $C_k$-tree for $\1_n$ is \emph{planar} if it can be represented, 
in the usual diagram of $\1_n$, by a tree clasper without any crossing among the edges and with edges overpassing all 
components of $\1_n$ up to isotopy.  

\begin{proof}[Proof of Lemma \ref{o-index}]
Statements $(1)$ and $(2)$ follow from similar arguments as for Lemma \ref{index1}.  
For $(1)$, observe that $t$ can be deformed into $t'$ by crossing changes and sliding leaves.  
By the STU relation, if $c'$ is obtained from a $C_k$-tree $c$ for $\1_n$ by a sliding a leaf, 
we have $(\1_n)_c \stackrel{C_{k+1}}{\sim} (\1_n)_{c'}\cdot (\1_n)_g$ for some $C_k$-graph $g$ with loop.  
For $(2)$, use the IHX and STU relation as in the proof of Lemma \ref{index1}.  

For simplicity, we show $(3)$ in the case where $t$ is planar and both ends of $t$ are $n$-leaves.  
By assumption the o-index$(t)$ has the form $(i_0,...,i_{k/2-1},i_{k/2},i_{k/2-1},...,i_0)$.  
We may assume that the axis $a$ of the edge incident to the $i_{k/2}$-leaf of $t$ is transverse 
to each component of $\1_n$ up to isotopy.  Let $\tilde{t}$ be obtained by $180$-degree rotation 
of $t$ around $a$ fixing the leaves.  
By sliding the leaves of $\tilde{t}$ repeatedly, we can deform it into a planar $C_k$-tree 
$\overline{t}$ which only differs from $t$ by a half-twist on each edge incident to a leaf.  
By the observation above, the STU relation gives that 
$(\1_n)_{\tilde{t}}\stackrel{C_{k+1}}{\sim} (\1_n)_{\overline{t}}\prod_i(\1_n)_{g_i}$ for 
some union $g_1,...,g_m$ of $C_k$-graphs with loops.  
On the other hand, by Lemma \ref{calculus}(2) we have 
$(\1_n)_{t})\cdot (\1_n)_{\overline{t}}\stackrel{C_{k+1}}{\sim} \1_n$.  
The result follows.  
\end{proof}
It is known that surgery along graphs with loop implies concordance.  
\begin{lemma}(\cite{CT,GL})\label{graph} 
Let $g$ be a $C_k$-graph with loop for $\1_n$.  Then $(\1_n)_g \stackrel{c}{\sim} \1_n$.  
\end{lemma}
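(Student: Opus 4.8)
The statement is four-dimensional in nature: it asserts that the three-dimensional operation of surgery along a looped clasper can be undone at the cost of a concordance, and my plan follows the strategy of \cite{CT,GL}. First I would recall that $(\1_n)_g$ is obtained from $\1_n$ by surgery along the framed link $\gamma(g)$ lying in a regular neighbourhood of $g$, the resulting ambient manifold being canonically identified with $D^2\times I$. Since the construction below is local to a single loop, I may assume that $g$ carries exactly one loop (if it has several, one simply applies the argument to any one of them). Using Calculus of Claspers together with the IHX and STU relations (Lemmas \ref{calculus} and \ref{asihxstu}), I would bring the loop into a standard position inside a ball, so that it suffices to analyse a single model configuration.

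The heart of the argument is to exploit the loop to produce a \emph{capping disk}. Reorganising $g$ by clasper calculus, the loop yields a leaf $\ell$ of a clasper realising the same surgery as $g$ which is null-homotopic in the complement of $\1_n$: concretely, $\ell$ may be taken to be a meridian of one of the edges of $g$, and as such it bounds a disk $\delta$ in $D^2\times I$ whose interior is disjoint from $\1_n$ but which meets the remaining edges and leaves of the clasper transversely. Were $\delta$ embedded and disjoint from all of $g$, the leaf $\ell$ could be discarded and the surgery would reduce to one of strictly smaller complexity, eventually to an isotopy; the transverse intersections of $\delta$ with the rest of the clasper are precisely the obstruction to carrying this out in three dimensions.

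To remove this obstruction I would pass to the four-manifold $(D^2\times I)\times[0,1]$ and push the interior of $\delta$ into the collar direction. After a small generic perturbation the intersections of $\delta$ with the clasper, together with any self-intersections, are pushed off into $(D^2\times I)\times(0,1]$, so that $\delta$ becomes an embedded disk there. Undoing the surgery along $\gamma(g)$ against this embedded cap then sweeps out a family of disjointly embedded annuli in $(D^2\times I)\times[0,1]$ joining $\1_n\times\{0\}$ to $(\1_n)_g\times\{1\}$, and by definition these annuli realise a concordance $(\1_n)_g\stackrel{c}{\sim}\1_n$. The main obstacle, and the genuinely four-dimensional content carried out in \cite{CT,GL}, is to verify that the capping disk and the resulting annuli can be made \emph{disjointly embedded} after pushing into the collar; this is exactly the step where the presence of the loop is indispensable, since a tree clasper would furnish only an immersed, grope-like surface rather than an embedded concordance.
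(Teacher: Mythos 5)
The paper does not actually prove this lemma: it is quoted from \cite{CT,GL} with no argument supplied, so there is no internal proof to compare yours against. Your outline does capture the broad strategy of those references --- produce a capping disk from the loop, push it into the four-dimensional collar to embed it, and use the embedded cap to undo the surgery by a concordance --- but as written it has two genuine gaps.

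First, the normalization step cannot be carried out with the tools you invoke. Lemma \ref{calculus} and Lemma \ref{asihxstu} are only equivalences \emph{modulo $C_{k+1}$}, and their error terms are surgeries along degree-$(k+1)$ claspers that need not carry loops; since tree-clasper surgery does not preserve the concordance class, putting the loop ``into standard position'' up to $C_{k+1}$-equivalence does not reduce the lemma to a model configuration. The reduction must be done with Habiro's exact moves (moves 1--12 of \cite{H} and the zip construction), which are honest equalities of surgery results. Second, and more seriously, the mechanism you give for producing the null-homotopic leaf does not actually use the loop. A leaf that is a meridian of an edge, bounding a disk disjoint from $\1_n$ but meeting the rest of the clasper, arises for \emph{every} graph clasper of degree $\ge 2$ once it is broken into $Y$-pieces --- trees included. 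If the argument as sketched sufficed, it would equally show that surgery along the $C_2$-tree $t_i$ yields a concordance, which is false (it produces a trefoil component, detected by the Arf invariant, which the paper itself uses as a concordance invariant in Section 5). The entire content of the lemma is precisely the point you defer to \cite{CT,GL} in your final sentence: why the extra edge of the loop, beyond a spanning tree, furnishes a leaf whose pushed-in cap can be made disjoint from the annuli tracing the concordance and from the remaining surgery data, whereas the Hopf-paired internal leaves of a tree cannot all be capped simultaneously. Until that step is argued rather than named, the proposal is a plan, not a proof.
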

There are in general many linear $C_k$-trees with same ordered index, so $T(i_0,...,i_k)$ is not determined by the o-index. 
For each o-index $(i_0,...,i_k)$, we choose one string link $T(i_0,...,i_k)$ obtained from $\1_n$ by surgery along a linear $C_k$-tree with o-index $(i_0,...,i_k)$, and fix it. 
We note that by Lemmas~\ref{o-index}~(1) and \ref{graph}, there are essentially two choices in $\mathcal{SL}(n) / (C_{k+1}+c)$ 
for each o-index, namely $T(i_0,...,i_k)$ and $T(i_0,...,i_k)^{-1}$.  

The next lemma can be obtained using the calculation method in \cite[Rem. 5.3]{akira}.  
\begin{lemma}\label{milnor-inv}
Let $\sigma$ be an $n$-string link obtained from $\1_n$ by surgery along a linear $C_k$-tree with o-index $I=(i_0,...,i_k)$. 
If $\{i_0,i_k\}\cap\{i_1,...,i_{k-1}\}=\emptyset$, then we have:

(1)~If $(i_0,...,i_k)\neq(i_k,...,i_0)$, then for any $J=i_0j_1...j_{k-1}i_k$, $\mu_{\sigma}(J)=\pm \delta_{I,J}$. 

(2)~If $(i_0,...,i_k)=(i_k,...,i_0)$ and $k$ is an odd number $2m+1$, i.e., $(i_0,...,i_k)=(i_0,...,i_{m},i_{m},...,i_0)$, then for any $J=i_0j_1...j_{k-1}i_0$, $\mu_{\sigma}(J)=\pm 2 \delta_{I,J}$. 

(3)~If $(i_0,...,i_k)=(i_k,...,i_0)$ and $k$ is an even number $2m$, i.e., 
$(i_0,...,i_k)=(i_0,...,i_{m-1},i_m,i_{m-1},...,i_0)$, 
then the Milnor invariants of $\sigma$ with length $\leq k$ vanish, and 
for any $J=i_0j_1...j_{2m}i_0$, 
\[\mu_{\sigma}(J)=
\left\{\begin{array}{ll}
\pm1 & \text{if $(j_1,...,j_{2m})=(i_1,...,i_{m-1},i_m,i_m,i_{m-1},...,i_1)$}\\
0 & \text{otherwise}
\end{array}\right.\]
\end{lemma}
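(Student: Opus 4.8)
The plan is to compute the Milnor invariants of $\sigma=(\1_n)_t$ directly, via the Magnus expansion of its longitudes, which is the content of the calculation method of \cite[Rem. 5.3]{akira}. Writing the longitudes $\ell_1,\dots,\ell_n$ of $\sigma$ as words in the meridians $x_1,\dots,x_n$ and substituting $x_i=1+X_i$, the invariant $\mu_\sigma(j_0j_1\cdots j_{k-1}j_k)$ is the coefficient of the monomial $X_{j_0}X_{j_1}\cdots X_{j_{k-1}}$ in $\ell_{j_k}$, once all strictly shorter Milnor invariants vanish. Since $\sigma$ is obtained from $\1_n$ by surgery along a single $C_k$-tree it is $C_k$-equivalent to $\1_n$, so by \cite[Thm. 7.2]{H} every Milnor invariant of length $\le k$ vanishes; this already gives the length $\le k$ assertion of $(3)$ and justifies reading off $\mu_\sigma$ from the monomials of degree $k$ and $k+1$.

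First I would record the leading behaviour of the longitudes. Modulo the $(k+2)$nd lower central series term of the fundamental group of the complement, surgery along $t$ multiplies the longitude of a component $c$, for each $c$-leaf $f$ of $t$, by a commutator $w_f\in\Gamma_k$ obtained by rooting the caterpillar $t$ at $f$; the leading (degree $k$) term of $w_f$ is the corresponding iterated Lie bracket in the $X_i$. The key combinatorial input, proved by an easy induction on the length of a caterpillar bracket, is that if a letter occurs exactly once and at an extreme position of the bracket, then exactly one monomial of the expansion begins with that letter, read off along the tree. Under the hypothesis $\{i_0,i_k\}\cap\{i_1,\dots,i_{k-1}\}=\emptyset$ the end-letters enjoy this property. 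In case $(1)$ the index $J=i_0j_1\cdots j_{k-1}i_k$ is read from an $i_k$-leaf; when the two ends are distinct there is a unique such leaf and $\mu_\sigma(J)$ equals the coefficient of $X_{i_0}X_{j_1}\cdots X_{j_{k-1}}$ in the associated bracket, which is $\pm1$ exactly when $(j_1,\dots,j_{k-1})=(i_1,\dots,i_{k-1})$ and $0$ otherwise, giving $\mu_\sigma(J)=\pm\delta_{I,J}$ (the case of coincident but non-palindromic ends is governed by the same two-contribution analysis as below).

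When the two ends carry the same label, $\ell_{i_0}$ receives one contribution from each end-leaf, and these are related by reading $t$ backwards, i.e. by the reversal anti-automorphism, which multiplies a degree-$k$ Lie bracket by $(-1)^{k-1}$. The palindrome hypothesis makes the two contributions equal up to this sign, so for odd $k$ they reinforce and give $\mu_\sigma(J)=\pm2\,\delta_{I,J}$, proving $(2)$; the factor $2$ is precisely this coincidence. For even $k$ the same sign forces the two degree-$k$ contributions to cancel, so the length-$(k+1)$ invariants vanish; I would also derive this vanishing independently from Lemma~\ref{o-index}(3), which gives $\sigma^2\stackrel{C_{k+1}}{\sim}\prod_i(\1_n)_{g_i}$ with each $g_i$ a graph with loop, together with Lemma~\ref{graph} and the concordance invariance of Milnor invariants: the right-hand side has trivial length-$(k+1)$ invariants, while the left-hand side has $2\mu_\sigma$ by $k$-additivity, whence $\mu_\sigma=0$ in length $k+1$.

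The remaining and most delicate step, case $(3)$, is to locate the first surviving invariant, which now lives in length $k+2$. Here one cannot stop at the Lie leading terms: the relevant coefficient is that of a degree-$(k+1)$ monomial in $\ell_{i_0}$, so the degree-$(k+1)$ corrections of the two end-commutators must be retained. The calculation of \cite[Rem. 5.3]{akira} shows that these second-order terms do not cancel, and that the surviving monomial is the one localized at the central leaf $i_m$ of the even palindrome, which is traversed twice as the reading turns around at the centre of the tree; this yields the doubled middle index $(i_1,\dots,i_{m-1},i_m,i_m,i_{m-1},\dots,i_1)$ with coefficient $\pm1$, and $0$ for every other $J=i_0j_1\cdots j_{2m}i_0$. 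The main obstacle is exactly this second-order bookkeeping: keeping precise track of the signs coming from the reversal and from the half-twist conventions of Lemma~\ref{calculus}(2), and verifying that no other degree-$(k+1)$ monomial beginning with $X_{i_0}$ survives.
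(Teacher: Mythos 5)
Your proposal is correct and takes essentially the same route as the paper, whose entire ``proof'' of this lemma is the one-line appeal to the calculation method of \cite[Rem. 5.3]{akira} --- namely the Magnus expansion of the longitudes, with one commutator contribution per end-leaf and the parity-dependent interaction of the two contributions in the palindromic case --- which is exactly what you reconstruct. The single step you still defer to that reference (the degree-$(k+1)$ coefficient bookkeeping in case (3)) is precisely the step the paper also leaves there, and your independent derivation of the even-case cancellation from Lemma \ref{o-index}(3) together with Lemma \ref{graph} and $k$-additivity is a sound cross-check not made explicit in the paper.
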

\begin{remark} \label{hop}
It follows in particular from $(2)$ that Milnor invariants $\mu(I)$ (mod $2$) with $I=(i_0,...,i_{m},i_{m},...,i_0)$ are $C_{k+1}$-equivalence invariance ($k=2m+1$).  
\end{remark}
\subsection{$C_k$-concordance for knots} \label{ckcknots}
In this Subsection we give a classification of knots up to $C_k$-concordance.  

Recall that $T(iii)$ is a fixed $n$-string link obtained from the trivial 1-string link $\1$ by surgery along a linear $C_2$-tree with o-index $(i,i,i)$.  Note that this tree can be chosen to be the $C_2$-tree $t_i$ represented in Figure \ref{c2tree}, in which case the closure of the $i$th component of $T(iii)$ is the right-handed trefoil. 
For $n=1$, we simply denote $T(111)$ by $T$.  
\begin{lemma}\label{string-knot}
Let $\sigma$ be a $1$-string link. 
For any integer $k\geq 3$, there is a union $G$ of disjoint graph claspers with loops for $\1_n$ such that 
$\sigma\stackrel{C_{k}}{\sim} T^{\varepsilon}\cdot (\1_n)_{G}$ for some $\varepsilon\in\{0,1\}$.  
\end{lemma}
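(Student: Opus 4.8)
The plan is to argue by induction on $k$, exploiting the fact that on a single strand \emph{every} leaf of a clasper is a $1$-leaf, so that the collapsing hypotheses of Lemma~\ref{o-index} are automatically satisfied.

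For the base case $k=3$, I would first invoke the classification of knots up to $C_3$-equivalence: since the only finite type knot invariant of degree $\le 2$ is $a_2$, Theorem~\ref{cnknots} gives $\sigma\stackrel{C_3}{\sim} T^{a_2(\sigma)}$ (recall $a_2(T)=1$ and that $a_2$ is additive under connected sum). The key point is then that the square of $T$ is absorbed into graph claspers with loops: writing $T=(\1)_t$ for a linear $C_2$-tree $t$ with o-index $(1,1,1)$, which is a palindrome, Lemma~\ref{o-index}(3) (with $k=2$ even) yields $T^2\stackrel{C_3}{\sim}\prod_i(\1)_{g_i}$ for some $C_2$-graphs $g_i$ with loops. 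Splitting $T^{a_2(\sigma)}=T^{\varepsilon}\cdot (T^2)^{\lfloor a_2(\sigma)/2\rfloor}$ thus gives $\sigma\stackrel{C_3}{\sim} T^{\varepsilon}\cdot(\1)_{G}$ with $\varepsilon=a_2(\sigma)\bmod 2\in\{0,1\}$ (the Arf invariant) and $G$ the disjoint union of the resulting loops.

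For the inductive step, suppose $\sigma\stackrel{C_k}{\sim}\tau:=T^{\varepsilon}\cdot(\1)_G$. Then $\sigma$ and $\tau$ differ by surgery along simple $C_k$-trees, and by Calculus of Claspers (Lemma~\ref{calculus}, sliding these trees off $\tau$ into a trivial portion of the cylinder, which costs only $C_{k+1}$) I would get $\sigma\stackrel{C_{k+1}}{\sim}\tau\cdot\prod_i(\1)_{t_i}$ with each $t_i$ a simple $C_k$-tree for $\1$. Using the IHX relation I would make each $t_i$ linear; since $n=1$ its o-index is $(1,\dots,1)$, so $i_0=i_1$ and Lemma~\ref{o-index}(2) (valid because $k\ge 3$) gives $(\1)_{t_i}\stackrel{C_{k+1}}{\sim}\prod_j(\1)_{g_{ij}}$ with the $g_{ij}$ graphs with loops. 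Finally I would commute $T^{\varepsilon}$ to the front---legitimate modulo $C_{k+1}$ since the commutator of $T^{\varepsilon}\in\mathcal{SL}_2(1)$ with a degree-$k$ surgery lies in $\mathcal{SL}_{k+2}(1)\subseteq\mathcal{SL}_{k+1}(1)$---and collect $G$ together with all the $g_{ij}$ into a single disjoint (stacked) union $G'$, yielding $\sigma\stackrel{C_{k+1}}{\sim} T^{\varepsilon}\cdot(\1)_{G'}$ with the same $\varepsilon\in\{0,1\}$.

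Essentially all the geometric content is packaged in Lemma~\ref{o-index}, which is already available, so the main thing to get right is the bookkeeping: reducing the $C_k$-trees attached to the nontrivial knot $\tau$ to trees for $\1$ via Calculus of Claspers, and keeping track that $\varepsilon$ never leaves $\{0,1\}$ while all higher-degree contributions flow into graph claspers with loops. The conceptual crux---and the reason the statement is so clean---is that for a $1$-string link every clasper leaf sits on the same component, so the collapsing hypothesis $i_0=i_1$ of Lemma~\ref{o-index}(2) is automatic; the only genuine input beyond this is the palindrome reduction $T^2\stackrel{C_3}{\sim}(\text{loops})$, which cuts the exponent of $T$ down to a single bit.
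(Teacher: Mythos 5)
Your proposal is correct and follows essentially the same route as the paper: the base case reduces $\sigma$ to a power of $T$ (the paper cites Lemma~\ref{o-index}(1) or Theorem~\ref{c3}, which is equivalent to your use of Theorem~\ref{cnknots} together with $a_2$ being the only degree~$\le 2$ knot invariant) and then absorbs $T^2$ into loops via Lemma~\ref{o-index}(3), while the inductive step observes that every linear $C_k$-tree on one strand has o-index $(1,\dots,1)$ and applies Lemma~\ref{o-index}(2). The extra bookkeeping you supply (sliding the $C_k$-trees off $\tau$, linearizing via IHX, commuting $T^{\varepsilon}$ to the front) is exactly what the paper compresses into ``Calculus of Claspers.''
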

\begin{proof}
We proceed by induction on $k$. 
For $k=3$, by Lemma \ref{o-index}~(1) (or Theorem \ref{c3}), 
we have $\sigma\stackrel{C_{3}}{\sim} T^x$ for some integer $x$.
By Lemma \ref{o-index}~(3), there is a union $g$ of $C_2$-graph with loop such that 
$T^2\stackrel{C_3}{\sim} (\1)_g$.  
(Actually, it is easy to check using the AS and STU relations that in this case $g$ is connected).   
Hence we have 
\[\sigma \stackrel{C_3}{\sim}
\left\{
\begin{array}{ll}
 T\cdot((\1)_g)^{(x+x/|x|)/2} & \text{if $x$ is odd},\\
 ((\1)_g)^{x/2} & \text{if $x$ is even}. 
 \end{array}\right.\]

Now suppose that there is a union $g_1,...,g_m$ of disjoint graph claspers with loops for $\1_n$ such that $\sigma \stackrel{C_k}{\sim} T^{\varepsilon}\cdot\prod_i(\1_n)_{g_i}$.
Hence $\sigma$ is obtained from $T^{\varepsilon}\cdot\prod_i(\1_n)_{g_i}$ by 
surgery along linear $C_k$-trees. 
Since any linear $C_k$-tree for a 1-string link has o-index $(1,...,1)$, by 
Lemma~\ref{o-index}~(2), we have that there are $C_k$-graphs $h_1,...,h_l$ 
with loops for $\1_n$ such that 
$\sigma\stackrel{C_{k+1}}{\sim} T^{\varepsilon}\cdot\prod_i(\1_n)_{g_i}\cdot\prod_j(\1_n)_{h_j}$.
\end{proof}

We prove the following.  

\begin{theorem}
For an integer $k\geq 3$, two knots $K$ and $K'$ are $C_k$-concordant if and only if $\mathrm{Arf}(K)=\mathrm{Arf}(K')$. 
\end{theorem}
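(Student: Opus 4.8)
The plan is to prove both implications by reducing every knot, via the results of the previous subsection, to one of exactly two $C_k$-concordance classes, represented by the unknot $\1$ and the trefoil $T$, and then to separate these two classes by the Arf invariant.

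First I would dispose of the \lq only if\rq\ direction, which will also be reused below. Recall that $\mathrm{Arf}(K)\equiv a_2(K)\pmod 2$. Since $a_2$ is a finite type invariant of degree $2$, Theorem \ref{cnknots} shows it is a $C_3$-equivalence invariant, hence a $C_{k+1}$-equivalence invariant for every $k\ge 2$ (as $C_{k+1}$-equivalence is finer than $C_3$-equivalence); in particular $\mathrm{Arf}$ is unchanged under $C_{k+1}$-equivalence. As $\mathrm{Arf}$ is moreover a concordance invariant, it is preserved by each of the two elementary moves generating the $C_k$-concordance relation, so $\mathrm{Arf}$ is a $C_k$-concordance invariant for $k\ge 3$. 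This proves that $K\stackrel{C_k+c}{\sim} K'$ implies $\mathrm{Arf}(K)=\mathrm{Arf}(K')$.

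For the converse, I would first put an arbitrary $1$-string link $\sigma$ in normal form. Applying Lemma \ref{string-knot} with $k$ replaced by $k+1$ (legitimate since $k+1\ge 4\ge 3$), there is a union $G$ of graph claspers with loops and an $\varepsilon\in\{0,1\}$ with $\sigma\stackrel{C_{k+1}}{\sim} T^{\varepsilon}\cdot(\1)_{G}$. By Lemma \ref{graph} we have $(\1)_{G}\stackrel{c}{\sim}\1$, and since string link concordance is compatible with the stacking product this yields $T^{\varepsilon}\cdot(\1)_{G}\stackrel{c}{\sim} T^{\varepsilon}$. Chaining one $C_{k+1}$-equivalence with one concordance gives $\sigma\stackrel{C_k+c}{\sim} T^{\varepsilon}$, so every knot is $C_k$-concordant to either the unknot ($\varepsilon=0$) or the trefoil ($\varepsilon=1$).

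It then remains to identify $\varepsilon$. Since $\mathrm{Arf}(T)=1$ (the closure of $T$ is the trefoil, with $a_2=1$) and $\mathrm{Arf}$ is additive under the stacking product, we get $\mathrm{Arf}(T^{\varepsilon})=\varepsilon$; by the invariance established in the first paragraph, $\varepsilon=\mathrm{Arf}(T^{\varepsilon})=\mathrm{Arf}(\sigma)$. Hence if $\mathrm{Arf}(K)=\mathrm{Arf}(K')$, the two associated exponents coincide, and $K\stackrel{C_k+c}{\sim} T^{\varepsilon}\stackrel{C_k+c}{\sim} K'$, completing the proof. The substantive work is carried entirely by Lemmas \ref{string-knot} and \ref{graph}; the only points left to verify are the compatibility of concordance with the stacking product (so that the loop part $(\1)_G$ can be absorbed without disturbing $T^{\varepsilon}$) and the computation $\mathrm{Arf}(T)=1$. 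The conceptual heart of the argument, and thus the main obstacle were the lemmas not already available, is precisely that surgery along a graph clasper with a loop is trivial up to concordance: this is what collapses all of the potentially infinitely many finite type data down to the single residual bit $\varepsilon$, which $\mathrm{Arf}$ then detects.
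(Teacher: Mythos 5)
Your proof is correct and follows essentially the same route as the paper: reduce an arbitrary $1$-string link to $T^{\varepsilon}$ via Lemma \ref{string-knot} (normal form up to graph claspers with loops) and Lemma \ref{graph} (loops die under concordance), then detect $\varepsilon$ with the Arf invariant. The only difference is cosmetic: you spell out why $\mathrm{Arf}$ is a $C_k$-concordance invariant via $a_2 \bmod 2$ and Theorem \ref{cnknots}, where the paper simply cites Robertello.
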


Recall that any knot is $C_2$-equivalent to the trivial one \cite{MN}.

\begin{proof}
Let $\sigma$ and $\sigma'$ be 1-string links whose closures are 
$K$ and $K'$ respectively. 
By Lemma~\ref{string-knot}, there are graph claspers $g_1,...,g_m$ and 
$g'_1,...,g'_l$ with loops for $\1_n$ such that 
$\sigma\stackrel{C_{k}}{\sim} T^{\varepsilon}\cdot\prod_i(\1_n)_{g_i}$ 
and 
$\sigma'\stackrel{C_{k}}{\sim} T^{\varepsilon'}\cdot\prod_j(\1_n)_{g'_j}$ 
for some $\varepsilon,\varepsilon'\in\{0,1\}$. 
So by Lemma~\ref{graph}, 
$\sigma\stackrel{C_{k}+c}{\sim} T^{\varepsilon}$ and 
$\sigma'\stackrel{C_{k}+c}{\sim} T^{\varepsilon'}$. 
Since the Arf invariant is a $C_k$-concordance invariant \cite{rob}, and since 
$\mathrm{Arf}(T)=1$, we have 
$\mathrm{Arf}(K)=\varepsilon$ and $\mathrm{Arf}(K')=\varepsilon'$. 
This completes the proof.
\end{proof}
\begin{remark}
This result is also proved in \cite{sase}, using different methods.  
Another (non-direct) proof can also be obtained by combining Theorem \ref{cnknots} and \cite{ng}. 
\end{remark}
\subsection{$C_k$-concordance for string links}\label{ckcsl}
In this section, we give classifications for $n$-string links up to 
$C_k$-concordance $(k=3,4,5,6)$. 
For each $k>0$, the set of $C_k$-concordance classes forms a group. 
In order to give these classifications, we give a representative of the $C_k$-concordance class of an arbitrary $n$-string link in terms of the generators $T(iii)~(1\leq i\leq n)$ and $T(I)$'s, where $I$ contains at least 2 distinct 
integers. More precisely, we will show that any string link is 
$C_k$-concordant to $\prod_i T(iii)^{a(i)}\cdot\prod_I T(I)^{b(I)}$ 
where $a(i)$ and $b(I)$ are determined by the Arf invariant 
and (mod 2) Milnor invariants respectively. 
For $k=3,4,5$, we already have generators for the $C_k$-equivalent classes, by the proofs of Theorems \ref{c3}, \ref{c4} and \ref{c5}, and we can choose the desired generators among them.  We will introduce similar generators for $k=6$.

We will give the classification results successively, as consequences of each step of our construction of a representative of the $(C_6+c)$-equivalence class of a string link.  In particular, the various proofs are contained in this construction.  

Before starting the construction, we fix the convention below. 
\begin{convention} 
(1). By Lemma~\ref{milnor-inv}~(1) and (2), we see that for each o-index $I=(i_0,...,i_k)$ with  
$\{i_0,i_k\}\cap\{i_1,...,i_{k-1}\}=\emptyset$, we have $\mu_{T(I)}(I)=\pm 1$ or $\pm2$. 
As mentioned before, we have essentially two choices for $T(I)$ and $T(I)^{-1}$ up to $C_{k+1}$-concordance. 
In this section, we chose $T(I)$ so that $\mu_{T(I)}(I)$ is positive whenever $I$ satisfies Lemma~\ref{milnor-inv}~(1) or (2).   (Note that for such a multi-index $I$ we have $\mu_{T(I)^{-1}}(I)=-\mu_{T(I)}(I)$).  
For example, $T(ij)$ is the $n$-string link $L_{ij}$ obtained from $\1_n$ by surgery along the $C_1$-tree $l_{ij}$ of Figure \ref{c2tree} ($1\le i<j\le n$).  

(2). When denoting o-indices, we will let distinct letters denote distinct integers unless otherwise specified. For example 
the set $\{(ijk)|1\leq i,j,k\leq n\}$ of o-indices does not contain $(iii)~(1\leq i\leq n)$. 
\end{convention}

Let $\sigma$ be an $n$-string link. 
By \cite{MN}, we have that $\sigma$ is $C_2$-equivalent to a string link
\[\sigma{(0)}=\prod_{1\leq i<j\leq n}T(ij)^{\mu(ij)}.\]
So $\sigma$ is obtained from $\sigma(0)$ by surgery along linear $C_2$-trees.  
So by Lemmas~\ref{o-index} and \ref{string-knot}, there is a disjoint union $G_1$ of $C_2$-graphs with loops such that 
\begin{equation}\label{mTm1}
\sigma\stackrel{C_3}{\sim}\sigma{(0)}\cdot \sigma(1)\cdot (\1_n)_{G_1},
\end{equation}
where
\[\sigma{(1)}=\prod_{1\leq i\leq n} T(iii)^{\varepsilon(iii)}\cdot
\prod_{1\leq i< j\leq n} T(jij)^{\varepsilon(jij)}\cdot
\prod_{1\leq i<j<k\leq n} T(ijk)^{x(ijk)}\]
for some $\varepsilon(iii), \varepsilon(jij)\in\{0,1\}$ and some integers $x(ijk)$.  
Note that $T(iii)$, $T(jij)$ and $T(ijk)$ are the $n$-string links $T_i$, $W_{ji}$ and $B_{ijk}$ introduced in Subsection \ref{proofc3}, obtained respectively from $\1_n$ by surgery along the $C_2$-trees $t_i$, $w_{ji}$ and $b_{ijk}$ of Figure \ref{c2tree}.  In particular, we have $T(jij)\stackrel{C_3}{\sim} T(iji)$.  
By Lemma \ref{graph}, it follows that $\sigma\stackrel{C_3+c}{\sim}\sigma(0)\cdot\sigma(1)$.  

We denote by $\mathrm{Arf}_i(\sigma)$ the Arf invariant for (the closure of) the $i$th component of $\sigma$.
By Lemma~\ref{milnor-inv}, we have 
\[\begin{array}{rcl}
\mu_{\sigma}(ij) & = & \mu_{\sigma{(0)}}(ij), \\
\mathrm{Arf}_i(\sigma) & = & \mathrm{Arf}_i(\sigma{(0)})+\mathrm{Arf}_i(\sigma{(1)})=\mathrm{Arf}_i(\sigma(1))=\varepsilon(iii),\\
\mu_{\sigma}(ijk) & = & \mu_{\sigma{(0)}}(ijk)+\mu_{\sigma{(1)}}(ijk)=\mu_{\sigma{(0)}}(ijk)+x(ijk), \\
\mu_{\sigma}(jiij)&\equiv& \mu_{\sigma{(0)}}(jiij)+\mu_{\sigma{(1)}}(jiij)\\
&\equiv& \displaystyle\mu_{\sigma{(0)}}(jiij)+\sum_{1\leq i<j<k\leq n}x(ijk)\mu_{T(ijk)}(jiij)+
\varepsilon(jiij)~\text{mod~ $2$}.\end{array}\]
Since these invariants are $C_3$-concordance invariants, we have the following.
\begin{theorem}\label{c3c}
Two $n$-string links are $C_3$-concordant if and only if 
they share all invariants $\mathrm{Arf}_i$, $\mu(ij)~(1\leq i<j\leq n)$, 
$\mu(ijk)~(1\leq i<j<k\leq n)$ and $\mu(jiij)~\text{mod}~2~(1\leq i<j\leq n)$.
\end{theorem}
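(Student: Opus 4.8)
The plan is to deduce the theorem directly from the construction carried out in the lines immediately preceding the statement, where we showed that every $\si\in\mathcal{SL}(n)$ is $C_3$-concordant to the explicit representative $\si(0)\cdot\si(1)$ of (\ref{mTm1}) (the loop-clasper factor $(\1_n)_{G_1}$ having been absorbed by concordance via Lemma \ref{graph}), and where we expressed the listed invariants in terms of the exponents $\mu(ij)$, $\varepsilon(iii)$, $\varepsilon(jij)$ and $x(ijk)$ defining it. The proof then splits into the two implications, and the real content is to promote those formulas into a recovery statement.

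For the ``only if'' implication, I would check that each listed quantity is genuinely invariant under both $C_4$-moves and concordance. Milnor invariants are concordance invariants \cite{casson}, and $\mu(ij)$, $\mu(ijk)$, being of length $\le 3$, are $C_3$-concordance invariants by the discussion following Question \ref{question}; the reduction $\mu(jiij)\bmod 2$ is a $C_4$-equivalence invariant by Remark \ref{hop} (the case $m=1$, $k=3$) and hence a $C_3$-concordance invariant once combined with the concordance-invariance of Milnor invariants; and the Arf invariant is a $C_3$-concordance invariant by \cite{rob}. Thus $C_3$-concordant string links share all the listed invariants.

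For the ``if'' implication, the key point is that the tuple of exponents defining $\si(0)\cdot\si(1)$ is recovered from the listed invariants by solving a triangular system in the order in which the invariants were computed. First, $\mu_\si(ij)=\mu_{\si(0)}(ij)$ recovers the exponents of $\si(0)$, hence determines all the derived quantities $\mu_{\si(0)}(I)$. Next $\mathrm{Arf}_i(\si)=\varepsilon(iii)$ recovers the $\varepsilon(iii)$; then $x(ijk)=\mu_\si(ijk)-\mu_{\si(0)}(ijk)$ recovers the $x(ijk)$; and finally the mod-$2$ identity for $\mu_\si(jiij)$, in which every term other than $\varepsilon(jiij)$ is by now known, recovers $\varepsilon(jij)\bmod 2$. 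Consequently two string links sharing all the listed invariants admit the same representative up to $C_3$-concordance, and are therefore $C_3$-concordant.

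Since the representative, the invariant formulas, Lemma \ref{milnor-inv} and Lemma \ref{graph} are all already established, the only remaining work is the bookkeeping of this triangular system, and that is where I expect the sole subtlety to lie: one must verify that the system is genuinely solvable in the stated order, i.e. that $\mu_{\si(0)}(ijk)$, $\mu_{\si(0)}(jiij)$ and the fixed coefficients $\mu_{T(ijk)}(jiij)$ are all determined by the data recovered at earlier steps, so that no circularity arises and the mod-$2$ reduction in the last step discards no information that a later step would need.
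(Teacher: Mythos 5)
Your proposal is correct and follows essentially the same route as the paper: the paper's proof of Theorem \ref{c3c} consists precisely of the normal form $\si(0)\cdot\si(1)$ established in (\ref{mTm1}) together with Lemma \ref{graph}, the displayed formulas expressing $\mu(ij)$, $\mathrm{Arf}_i$, $\mu(ijk)$ and $\mu(jiij)\bmod 2$ triangularly in the exponents, and the observation that these quantities are $C_3$-concordance invariants. Your explicit check that the triangular system is solvable in order (and that the mod-$2$ value suffices because $\varepsilon(jij)\in\{0,1\}$) is exactly the bookkeeping the paper leaves implicit.
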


Since, by (\ref{mTm1}), the $n$-string link $\sigma$ is obtained from $\sigma(0)\cdot\sigma(1)\cdot(\1_n)_{G_1}$ by surgery along linear $C_3$-trees, by Lemmas~\ref{o-index} and \ref{string-knot}
, there is a 
disjoint union $G_2$ of $C_3$-graphs with loops such that 
\begin{equation}\label{mTm2}
\sigma\stackrel{C_4}{\sim}\sigma(0)\cdot\sigma(1)\cdot(\1_n)_{G_1}\cdot
\sigma(2)\cdot (\1_n)_{G_2},
\end{equation}
where 
\[\sigma(2):=\prod_{1\leq i< j\leq n} T(jiij)^{y(jiij)}\cdot
\prod_{\substack{ 1\leq i,j,k\leq n \\ i<j}} T(kijk)^{y(kijk)}\cdot
\prod_{1\leq i,j<k<l\leq n} T(kij)^{y(kijl)}\]
for some integers $y(jiij),y(kijk),y(kijl)$.  
Observe that $T(jiij)$, $T(kijk)$ and $T(kijl)$ correspond respectively to the string links obtained by surgery along the $C_3$-trees $h_{ij}$, $h_{ijk}$ and $h_{ijkl}$ of Figure \ref{c3tree}.  
(Actually, (\ref{mTm2}) can also be derived from the proof of Theorem \ref{c4}.)  

By Lemma~\ref{milnor-inv}, we have 
\[\mu_{\sigma}(jiij)=\mu_{\sigma(0)\cdot\sigma(1)}(jiij)+\mu_{\sigma(2)}(jiij)
=\mu_{\sigma(0)\cdot\sigma(1)}(jiij)+2y(jiij),\]
\[\mu_{\sigma}(kijk)= \mu_{\sigma(0)\cdot\sigma(1)}(kijk)+
\sum_{1\leq i<j\leq n}y(jiij)\mu_{T(jiij)}(kijk)+y(kijk),\]
\[\begin{array}{rcl}\mu_{\sigma}(kijl)&=& 
\displaystyle \mu_{\sigma(0)\cdot\sigma(1)}(kijl)
+\sum_{1\leq i<j\leq n}y(jiij)\mu_{T(jiij)}(kijl)\\
&&\displaystyle 
+\sum_{\substack{ 1\leq i,j,k\leq n \\ i<j}}y(kijk)\mu_{T(kijk)}(kijl)+y(kijl).\end{array}\]
Since these invariants are $C_4$-concordance invariants and 
$\sigma\stackrel{C_4+c}{\sim}\sigma(0)\cdot\sigma(1)\cdot\sigma(2)$, we have the 
following.
\begin{theorem}\label{c4c}
Two $n$-string links are $C_4$-concordant if and only if 
they are $C_3$-concordant and 
they share all invariants $\mu(jiij)~(1\leq i<j\leq n)$, 
$\mu(kijk)~(1\leq i,j,k\leq n~;~i<j)$ and 
$\mu(kijl)~(1\leq i,j<k<l\leq n)$.
\end{theorem}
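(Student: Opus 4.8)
The plan is to read off both implications from the representative of the $(C_4+c)$-equivalence class assembled just before the statement, namely $\sigma\stackrel{C_4+c}{\sim}\sigma(0)\cdot\sigma(1)\cdot\sigma(2)$, obtained by combining (\ref{mTm2}) with Lemma \ref{graph} to discard the loop-carrying claspers $(\1_n)_{G_1}$ and $(\1_n)_{G_2}$. The \emph{only if} direction is then immediate: $C_4$-concordance refines $C_3$-concordance (a $C_4$-equivalence is in particular a $C_3$-equivalence, and concordance is common to both relations), while each of $\mu(jiij)$, $\mu(kijk)$, $\mu(kijl)$ has length $4$ and is therefore a $C_4$-concordance invariant by \cite[Thm. 7.1]{H} together with the concordance invariance of Milnor numbers \cite{casson}. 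Hence any two $(C_4+c)$-equivalent string links agree on all the listed data.

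For the \emph{if} direction I would apply the displayed representative to both $\sigma$ and $\sigma'$, reducing the problem to showing that the hypotheses force the three factors $\sigma(i)$ ($i=0,1,2$) to coincide. The factors $\sigma(0)$ and $\sigma(1)$ are governed by Theorem \ref{c3c}: being $C_3$-concordant, $\sigma$ and $\sigma'$ share $\mathrm{Arf}_i$, $\mu(ij)$, $\mu(ijk)$ and $\mu(jiij)\bmod 2$, which are exactly the data determining the exponents of $\sigma(0)$ and $\sigma(1)$. Since these factors are built from the fixed chosen generators $T(\cdot)$, it follows that $\sigma(0)\cdot\sigma(1)=\sigma'(0)\cdot\sigma'(1)$ as string links.

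It remains to identify the integer exponents $y(jiij)$, $y(kijk)$, $y(kijl)$ of $\sigma(2)$, and here I would read the three displayed Milnor formulas as a triangular system. The first gives $2\,y(jiij)=\mu_\sigma(jiij)-\mu_{\sigma(0)\cdot\sigma(1)}(jiij)$, the second recovers $y(kijk)$ once $y(jiij)$ is known, and the third recovers $y(kijl)$ once the first two are known. Because the lower-order factor $\sigma(0)\cdot\sigma(1)$ has already been pinned down and all the length-$4$ Milnor invariants are shared by hypothesis, every right-hand side is common to $\sigma$ and $\sigma'$, so the exponents agree and $\sigma(2)=\sigma'(2)$. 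Concatenating, $\sigma\stackrel{C_4+c}{\sim}\sigma(0)\sigma(1)\sigma(2)=\sigma'(0)\sigma'(1)\sigma'(2)\stackrel{C_4+c}{\sim}\sigma'$, and transitivity finishes the argument.

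Since the representative and the Milnor formulas are already in hand, the remaining work is essentially bookkeeping; the point deserving attention is the factor of $2$ in the equation $\mu_\sigma(jiij)=\mu_{\sigma(0)\cdot\sigma(1)}(jiij)+2\,y(jiij)$. This records that $\sigma(2)$ contributes to $\mu(jiij)$ only through an even quantity, consistently with the fact that the \emph{mod $2$} reduction of $\mu(jiij)$ was already consumed at the $C_3$-concordance level (Theorem \ref{c3c}) to fix $\sigma(1)$; the \emph{integral} invariant $\mu(jiij)$ is precisely what is needed to recover the finer exponent $y(jiij)$, which is the conceptual content distinguishing Theorem \ref{c4c} from Theorem \ref{c3c}. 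One should finally check that the reordering turning (\ref{mTm2}) into the ordered product $\sigma(0)\sigma(1)\sigma(2)$ respects the $(C_4+c)$-relation, which is routine Calculus of Claspers, the commutator of a $C_a$- and a $C_b$-tree being $C_{a+b}$-trivial and the surviving graph claspers being killed by Lemma \ref{graph}.
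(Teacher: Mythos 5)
Your proposal is correct and follows essentially the same route as the paper: both read the two implications off the representative $\sigma\stackrel{C_4+c}{\sim}\sigma(0)\cdot\sigma(1)\cdot\sigma(2)$ coming from (\ref{mTm2}) and Lemma \ref{graph}, pin down $\sigma(0)\cdot\sigma(1)$ by Theorem \ref{c3c}, and solve the triangular system of length-$4$ Milnor invariant formulas (from Lemma \ref{milnor-inv}) for the exponents $y(jiij)$, $y(kijk)$, $y(kijl)$. Your remark about the factor of $2$ in $\mu(jiij)=\mu_{\sigma(0)\cdot\sigma(1)}(jiij)+2y(jiij)$ and its relation to the mod-$2$ invariant already used at the $C_3$-concordance stage is exactly the point the paper's computation encodes.
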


Now, by(\ref{mTm2}), there is a disjoint union $G_3$ of $C_4$-graphs with loops such that 
\begin{equation}\label{mTm3}
\sigma\stackrel{C_5}{\sim}\sigma(0)\cdot\sigma_1\cdot(\1_n)_{G_1}\cdot
\sigma(2)\cdot (\1_n)_{G_2}\cdot\sigma(3)\cdot(\1_n)_{G_3},
\end{equation}
where $\sigma(3)$ is given by 
\begin{equation} \label{sigma3}
\sigma(3):=\displaystyle\prod_{i, j} T(jiiij)^{\varepsilon(jiiij)}\cdot
\prod_{i, j<k} T(kijik)^{\varepsilon(kijik)}\cdot
\prod_{s=1}^4\prod_{I\in {\mathcal I}_s}T(I)^{z(I)}
\end{equation}
for some $\varepsilon(jiiij)\in\{0,1\}$ and some integers $\varepsilon(kijik)$ and $z(I)$, 
where 
\[{\mathcal I}_1=\{ikkkj~|~1\leq i,j,k\leq n,~i<j\},\]
\[{\mathcal I}_2=\{kiijk~|~1\leq i,j,k\leq n,~,~i<j,~i<k\}\cup\{kjiik~|~1\leq j<i<k\leq n\},\]
\[{\mathcal I}_3=\{kijpk~|~1\leq i,j,k,p\leq n,~i<p\},~
{\mathcal I}_4=\{pijkq~|~1\leq i,j,k<p<q\leq n\}.\]

In particular, the second product in (\ref{sigma3}) is obtained from the following two observations.  
One one hand, for $1\le i, j<k\le n$, we have by (\ref{equU}) that
$$ T(ijkji)\stackrel{C_5}{\sim} T(kjijk)^{\delta}\cdot T(kijik)^{\delta'}~(\delta,\delta'\in\{-1,1\}) $$ 
(noting that $T(ijkij)$, $T(kjijk)$ and $T(kijik)$ correspond to the string links obtained by 
surgery along the $C_4$-trees $u''_{(12)}$, $u_{(12)}$ and $u'_{(12)}$ of Figure \ref{figc4_3} 
respectively, and that $u''_{(12)}\stackrel{C_5+c}{\sim} u''_{(132)}$).  
On the other hand, by Lemma \ref{o-index}~(3) the $C_4$-trees above are $2$-torsion elements in 
$\mathcal{SL}(n) / (C_{5}+c)$.  

Note also that $T(jiiij)$ in (\ref{sigma3}) corresponds to the string links obtained by surgery along 
the $C_4$-tree $s^k_{id}$ of Figure \ref{figc4_1} ($k=1,2$), and that similarly for 
$I\in \mathcal{I}_1,~\mathcal{I}_2$, the various $T(I)$ correspond to $C_5$-concordance 
classes of the string links obtained by surgery along the $C_4$-trees of Figure \ref{figc4_3}.  

By Lemma~\ref{milnor-inv}, we have 
\[\mu_{\sigma}(jiiiij)\equiv\mu_{\sigma(0)\cdot\sigma(1)\cdot\sigma(2)}(jiiiij)
+\varepsilon(jiiij)~\text{mod}~2,\]
\[\mu_{\sigma}(kijjik)\equiv\mu_{\sigma(0)\cdot\sigma(1)\cdot\sigma(2)}(kijjik)
+\sum_{i,j}\varepsilon(jiiij)\mu_{T(jiiij)}(kijjik)+
\varepsilon(kijik)~\text{mod}~2,\]
and for each $I\in{\mathcal I}_s~(1\leq s\leq 4)$ , 
\[\mu_{\sigma}(I)= \mu_{\sigma(0)\cdot\sigma(1)\cdot\sigma(2)}(I)
+\sum_{i,j}\varepsilon(jiiij)\mu_{T(jiiij)}(I)
+\sum_{\substack{W\in{\mathcal I}_t \\ t<s}}z(W)\mu_{T(W)}(I)+z(I).\]
Since these invariants are $C_5$-concordance invariants and 
$\sigma\stackrel{C_5+c}{\sim}\sigma(0)\cdot\sigma(1)\cdot\sigma(2)\cdot\sigma(3)$, we have the 
following.

\begin{theorem}\label{c5c}
Two $n$-string links are $C_5$-concordant if and only if they are $C_4$-concordant and share all invariants $\mu(jiiiij)~\text{mod}~2$ 
$(1\leq i,j\leq n)$, $\mu(kijjik)~\text{mod}~2$ $(1\leq~i,j<k\leq~n)$ and 
$\mu(I)~(I\in{\mathcal I}_1\cup{\mathcal I}_2\cup{\mathcal I}_3\cup{\mathcal I}_4)$.
\end{theorem}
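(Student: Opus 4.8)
The plan is to prove the nontrivial (``if'') direction by exhibiting a canonical representative of the $C_5$-concordance class of an arbitrary $\sigma\in\mathcal{SL}(n)$ and checking that all of its exponents are functions of the invariants listed in the statement; the converse implication is immediate, since each such Milnor invariant (or its residue mod $2$) is a $C_5$-concordance invariant by Lemma \ref{graph} together with the concordance-invariance of Milnor numbers.

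First I would start from the decomposition (\ref{mTm3}), which writes $\sigma$ up to $C_5$-equivalence as $\sigma(0)\cdot\sigma(1)\cdot(\1_n)_{G_1}\cdot\sigma(2)\cdot(\1_n)_{G_2}\cdot\sigma(3)\cdot(\1_n)_{G_3}$, each $(\1_n)_{G_i}$ being surgery along a disjoint union of $C_i$-graphs with loops. By Lemma \ref{graph} every loop factor is concordant to $\1_n$, so these terms vanish modulo concordance and $\sigma\stackrel{C_5+c}{\sim}\sigma(0)\cdot\sigma(1)\cdot\sigma(2)\cdot\sigma(3)$. By Theorem \ref{c4c}, the $C_4$-concordance hypothesis already forces the exponents in $\sigma(0)$, $\sigma(1)$ and $\sigma(2)$ to coincide for any two such string links, so the whole problem reduces to recovering the exponents $\varepsilon(jiiij)$, $\varepsilon(kijik)$ and $z(I)$ ($I\in\mathcal{I}_1\cup\cdots\cup\mathcal{I}_4$) of $\sigma(3)$ in (\ref{sigma3}).

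The core step is the linear-algebra extraction, which rests on Lemma \ref{milnor-inv}. I would observe that the displayed system of congruences and equalities is triangular: $\mu_\sigma(jiiiij)\bmod 2$ isolates $\varepsilon(jiiij)$ after subtracting the already-known contribution of $\sigma(0)\cdot\sigma(1)\cdot\sigma(2)$; then $\mu_\sigma(kijjik)\bmod 2$ isolates $\varepsilon(kijik)$; and running $s=1,\dots,4$ in order, each $\mu_\sigma(I)$ with $I\in\mathcal{I}_s$ isolates $z(I)$, since the only contributions from other generators come from $T(W)$ with $W\in\mathcal{I}_t$, $t<s$. Hence every exponent of $\sigma(3)$ is a function of the listed invariants, so two string links sharing all of them have the same canonical form and are $C_5$-concordant.

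The main obstacle is not this extraction but the earlier derivation of the reduced form (\ref{sigma3}) of $\sigma(3)$. This demands the full clasper calculus: straightening an arbitrary union of $C_4$-trees into linear ones by the IHX relation, sorting them by ordered index, discarding those killed by Lemma \ref{o-index}(2), and using Lemma \ref{o-index}(3) with Lemma \ref{graph} to recognize certain palindromic o-indices as $2$-torsion in $\mathcal{SL}(n)/(C_5+c)$ --- which is precisely what produces the mod-$2$ invariants. The most delicate point is reducing the trees $T(ijkji)$ via the relation (\ref{equU}) imported from the proof of Theorem \ref{c5}, since it is this relation that pins down the exact list of surviving o-indices $\mathcal{I}_1,\dots,\mathcal{I}_4$. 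Once this bookkeeping is settled, the theorem follows at once.
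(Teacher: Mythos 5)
Your proposal follows essentially the same route as the paper: it starts from the decomposition (\ref{mTm3}), discards the loop-graph factors by Lemma \ref{graph}, reduces to determining the exponents of $\sigma(3)$ in (\ref{sigma3}), and recovers them from the listed invariants via the triangular system supplied by Lemma \ref{milnor-inv}, with the form of $\sigma(3)$ itself justified exactly as in the paper (IHX linearization, ordered indices, Lemma \ref{o-index}(2)--(3), and the relation (\ref{equU})). The only minor inaccuracy is the attribution of the ``only if'' direction: the $C_5$-concordance invariance of the mod-$2$ Milnor numbers of length $6$ rests on Remark \ref{hop} (i.e.\ Lemma \ref{milnor-inv}(2)) rather than on Lemma \ref{graph}.
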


Moreover, from (\ref{mTm3}) we have that there is
a string link $\sigma(4)$ and a disjoint union $G_4$ of $C_5$-graphs with loops 
such that 
\begin{equation}\label{pan}
\sigma\stackrel{C_6}{\sim}\sigma(0)\cdot\sigma(1)\cdot(\1_n)_{G_1}\cdot \sigma(2)\cdot (\1_n)_{G_2}\cdot\sigma(3)\cdot(\1_n)_{G_3}\cdot\sigma(4)\cdot(\1_n)_{G_4},
\end{equation}
where 
\[
\sigma(4)=\prod_{s=0}^7\prod_{J\in {\mathcal J}_s}T(J)^{w(J)}
\]
for some integers $w(J)$, where 
\[{\mathcal J}_0=\{ijijij~|~1\leq i<j\leq n\},~
{\mathcal J}_1=\{ijjjji~|~1\leq i,j\leq n\},\]
\[{\mathcal J}_2=\{ikkkkj~|~1\leq i,j,k\leq n,~i<j\},\]
\[{\mathcal J}_3=\{kiiijk,~kiijik,~kijjjk,~kjijjk~|~1\leq i,j,k\leq n,~i<j\},\]
\[{\mathcal J}_4=\{kiijjk, ~kijijk,~kijjik,~kjiijk|~1\leq i<j<k\leq n\},\]
\[\begin{array}{l}
{\mathcal J}_5=\{pikkjp,~pkijkp|~1\leq i,j,k,p\leq n,~i<j,~k<p\}\\
\hspace{6em}\cup\{pijkkp,~pikjkp~|~1\leq i,j<k<p\leq n\}\\
\hspace{6em}\cup\{pkkijp,~pkikjp~|~1\leq k<i,j,p\leq n\}\\
\hspace{6em}\cup\{pkkijp,~pkikjp~|~1\leq i,j,k,p\leq n,~i<k<j\},
\end{array}\]
\[{\mathcal J}_6=\{qkijpq~|~1\leq i,j,k,p,q\leq n,~k<p\},~{\mathcal J}_7=\{qijkpr~|~1\leq i,j,k,p<q<r\leq n\}.\]

Let us briefly explain how to determine these $\mathcal{J}_s$'s.  
First, separate $C_5$-trees by their indices. By Lemmas \ref{index1}, \ref{o-index}~(2) and \ref{graph}, 
we have eight cases : $\{i^{(3)},j^{(3)}\}$, $\{i^{(2)},j^{(4)}\}$, $\{i,j,k^{(4)}\}$, 
$\{i^{(3)},j,k^{(2)}\}$, $\{i^{(2)},j^{(2)},k^{(2)}\}$, 
$\{i,j,k^{(2)},p^{(2)}\}$, $\{i,j,k,p,q^{(2)}\}$ and $\{i,j,k,p,q,r\}$,  
which correspond to ${\mathcal J}_0$, ${\mathcal J}_1$, ${\mathcal J}_2$, 
${\mathcal J}_3$, ${\mathcal J}_4$, ${\mathcal J}_5$, ${\mathcal J}_6$ and ${\mathcal J}_7$ 
respectively. 
By the IHX relation, we may assume that each $C_5$-tree is linear, and we may chose any pair of leaves as ends.  
Hence for each of the eight cases above, we may choose the ends of any $C_5$-tree having the corresponding index.  Then we enumerate all possible o-indices, using Lemmas \ref{o-index}~(2) and \ref{graph}.  
For example, we may choose that the ends of any linear $C_5$-tree with index $\{i^{(3)},j^{(3)}\}$ are an $i$-leaf and a $j$-leaf, so we enumerate all o-indices starting with $i$ and ending with $j$.  By Lemma \ref{o-index}~(2), we are left with only two cases, namely $ijijij$ and $ijjiij$.  Now, it follows from two applications of the AS relation that $T(ijijij)\stackrel{C_6}{\sim} T(ijjiij)$.  So $ijijij$ is essentially the only o-index for $C_5$-trees with index $\{i^{(3)},j^{(3)}\}$.  

By combining a similar method as in \cite[Rem. 5.3]{akira} and the IHX relation, we have 
that for each $J\in{\mathcal J}_0$ 
\[\mu_{\sigma}(J)= \mu_{\sigma(0)\cdot\sigma(1)\cdot\sigma(2)\cdot\sigma(3)}(J)
+12\cdot w(J).\]
By Lemma~\ref{milnor-inv}, we have that
for each $J\in{\mathcal J}_s~(1\leq s\leq 7)$ 
\[\mu_{\sigma}(J)= \mu_{\sigma(0)\cdot\sigma(1)\cdot\sigma(2)\cdot\sigma(3)}(J)
+\sum_{0\leq t<s}\sum_{V\in{\mathcal J}_t}w(V)\mu_{T(V)}(J)+c_J\cdot w(J),\]
where $c_J=2$ if $J\in{\mathcal J}_1\cup \{kijjik,~kjiijk|~1\leq i<j<k\leq n\}$ 
and $c_J=1$ otherwise. 
Since these invariants are $C_6$-concordance invariants and 
$\sigma\stackrel{C_6+c}{\sim}\sigma(0)\cdot\sigma(1)\cdot\sigma(2)\cdot\sigma(3)\cdot\sigma(4)$, 
we have the following.
\begin{theorem}\label{c6c}
Two $n$-string links are $C_6$-concordant if and only if 
they are $C_5$-concordant and 
they share all invariants $\mu(J)$ for $J\in \mathcal{J}_i$ ($i=0,1,...,6$).   
\end{theorem}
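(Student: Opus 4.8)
The plan is to run the same scheme as in the proofs of Theorems \ref{c3c}--\ref{c5c}, with $\sigma(4)$ as the single new layer. The implication ``$\Rightarrow$'' is immediate: $C_6$-concordance refines $C_5$-concordance, and every index $J$ occurring in the families $\mathcal{J}_s$ has length $|J|=6$, so each $\mu(J)$ is simultaneously a $C_6$-equivalence invariant and a concordance invariant, hence a $C_6$-concordance invariant (by \cite{casson} and \cite[Thm. 7.1]{H}, as recalled at the beginning of the section); thus $C_6$-concordant string links share all of them.

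For the converse I would start from the representative (\ref{pan}). By Lemma \ref{graph} every looped factor $(\1_n)_{G_i}$ is concordant to $\1_n$, so
\[ \sigma\stackrel{C_6+c}{\sim}\sigma(0)\cdot\sigma(1)\cdot\sigma(2)\cdot\sigma(3)\cdot\sigma(4), \]
and likewise for $\sigma'$ with primed data. The first point is that the hypothesis $\sigma\stackrel{C_5+c}{\sim}\sigma'$ forces $\sigma(s)=\sigma'(s)$ for $0\le s\le 3$: in the proofs of Theorems \ref{c3c}--\ref{c5c} each exponent defining these factors is read off from an $\mathrm{Arf}_i$ or from a Milnor invariant of length $\le 5$ (reduced modulo $2$ for those generators that are $2$-torsion in the relevant concordance group), and all of these are $C_5$-concordance invariants shared by $\sigma$ and $\sigma'$. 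Consequently $\mu_{\sigma(0)\cdots\sigma(3)}(J)=\mu_{\sigma'(0)\cdots\sigma'(3)}(J)$ for every index $J$ of length $6$.

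It then suffices to prove $\sigma(4)\stackrel{C_6+c}{\sim}\sigma'(4)$, i.e. that $w(J)=w'(J)$ for every $J$ in the families $\mathcal{J}_0,\dots,\mathcal{J}_7$ defining $\sigma(4)$. For this I would invoke the Milnor computations recorded just before the statement, which give, for $J\in\mathcal{J}_s$ with $s\ge 1$,
\[ \mu_\sigma(J)=\mu_{\sigma(0)\cdots\sigma(3)}(J)+\sum_{0\le t<s}\ \sum_{V\in\mathcal{J}_t} w(V)\,\mu_{T(V)}(J)+c_J\,w(J), \]
with $c_J\in\{1,2\}$, together with the analogous identity $\mu_\sigma(J)=\mu_{\sigma(0)\cdots\sigma(3)}(J)+12\,w(J)$ for $J\in\mathcal{J}_0$. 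Writing the same identities for $\sigma'$, subtracting, and using $\mu_\sigma(J)=\mu_{\sigma'}(J)$ together with the equality of the $\sigma(0)\cdots\sigma(3)$-terms from the previous step, I obtain
\[ \sum_{0\le t<s}\ \sum_{V\in\mathcal{J}_t}\big(w(V)-w'(V)\big)\mu_{T(V)}(J)+c_J\big(w(J)-w'(J)\big)=0. \]
Since $c_J\ne 0$, an induction on the stratum $s$ (the base case $s=0$ being immediate, as only the $12\,w(J)$ term survives) yields $w(J)=w'(J)$ for all $J$, whence $\sigma\stackrel{C_6+c}{\sim}\sigma'$.

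The genuinely delicate input, which I regard as the heart of the argument, is the one already absorbed into (\ref{pan}) and the Milnor formulas: that the chosen o-index representatives in $\mathcal{J}_0,\dots,\mathcal{J}_7$ are \emph{exhaustive} --- every linear $C_5$-tree reduces, via Lemmas \ref{index1}, \ref{o-index}(2) and \ref{graph} and the AS/IHX relations, to one of these normal forms (for instance the collapse $T(ijijij)\stackrel{C_6}{\sim}T(ijjiij)$ for index $\{i^{(3)},j^{(3)}\}$) --- and that the resulting linear system is triangular across strata and diagonal within each stratum. The latter is exactly what Lemma \ref{milnor-inv} secures: for $J\in\mathcal{J}_s$ one needs $\mu_{T(V)}(J)=0$ whenever $V$ lies in a strictly higher stratum, or in the same stratum with $V\ne J$, so that only the diagonal term $c_J\,w(J)$ survives. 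The value $c_J=2$ arises precisely in the palindromic cases ($\mathcal{J}_1$ and the indices $kijjik$, $kjiijk$), reflecting the factor $2$ in Lemma \ref{milnor-inv}(2), while the exceptional coefficient $12$ for $\mathcal{J}_0$ comes from the separate computation; note that, since $|J|=6$ corresponds to $k=5$ odd, Lemma \ref{o-index}(3) produces no $2$-torsion here, which is why no ``mod $2$'' reductions are required and every $w(J)$ is determined over $\mathbb{Z}$. Verifying this triangular/diagonal structure and the exhaustiveness of the normal forms is the main obstacle; granting them, the inversion above is routine linear algebra.
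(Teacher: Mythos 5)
Your argument is correct and follows the paper's own proof essentially step for step: both start from the representative (\ref{pan}), discard the looped factors by Lemma \ref{graph}, use the $C_5$-concordance hypothesis to fix $\sigma(0)\cdots\sigma(3)$, and then invert the triangular system of Milnor-invariant formulas (with diagonal coefficients $c_J\in\{1,2\}$, resp.\ $12$ on $\mathcal{J}_0$) to recover the exponents $w(J)$. The only remark worth making is that both your proof and the paper's actually use $\mu(J)$ for $J\in\mathcal{J}_7$ as well, so the range ``$i=0,1,\dots,6$'' in the statement appears to be a typo for $i=0,1,\dots,7$.
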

\begin{remark}
Theorem \ref{c3c}, as well as the $2$-component cases of Theorems \ref{c4c}, \ref{c5c} and \ref{c6c}, 
are also proved in \cite{sase}, using different methods.  
\end{remark}
\begin{remark}
Note that for $k\le 6$, we meet new torsion elements in the group $\mathcal{SL}(n) / (C_k+c)$ for $k=3$ and $5$.  These are all $2$-torsion elements of the form $T(i_0,i_1,...,i_p,i_{p+1},i_p,...,i_1,i_0)$, possibly  with $i_j=i_k$ for $j,k\ne 0$.  By Lemma \ref{o-index}~(3), there are such 2-torsion elements in $\mathcal{SL}(n) / (C_k+c)$ for any odd $k$.  
\end{remark}
\section{$2$-string links up to self $C_3$-moves and concordance}
Given a multi-index $I$, let $r(I)$ denote the maximum number of times that any index appears. For example, $r(1123)=2,~r(1231223)=3$. It is known that if $r(I)=1$, then Milnor invariant with index $I$ is a {\em link-homotopy} invariant \cite{Milnor,HL}, where link-homotopy is an equivalence relation on links generated by self crossing changes. 
Milnor invariants give a link-homotopy classification of string links \cite{HL}. 
  
Although Milnor invariants with $r\geq 2$ are not necessarily link-homotopy invariants, Fleming and the second author showed that $\mu$-invariants with $r\leq k$ are {\em self $C_k$-equivalence} invariants for string links, where the self $C_k$-equivalence is an equivalence relation on (string) links generated by self $C_k$-moves, which are $C_k$-moves with all $k+1$ strands in a single component.  See \cite[Theorem 3.1]{FY} and \cite{yasuhara}.  

Two string links $\sigma$ and $\sigma'$ are {\em self-$C_k$ concordant} if there is a sequence $\sigma=\sigma_1,...,\sigma_m=\sigma'$ of string links such that for each $i(\in\{1,...,m-1\}$), $\sigma_i$ and $\sigma_{i+1}$ are either concordant or self $C_k$-equivalent. 

Since Milnor invariants are concordance invariants, any Milnor invariant indexed by $I$ with $r(I)\leq k$ is a self-$C_k$ concordance invariant.  The second author showed that Milnor invariants $\mu(I)$ with $r(I)\leq 2$ classify string links up to self-$C_2$ concordance \cite{yasuhara}. 
Here we give a self-$C_3$ concordance classification for $2$-string links. 
\begin{theorem}
Two $2$-string links are self-$C_3$ concordant of and only if 
they share all invariants $\mathrm{Arf}_i~(i=1,2)$, $\mu(12)$, 
$\mu(2112)$, $\mu(121212)$, and $\mu(jiiiij)$ mod $2$ $(\{i,j\}=\{1,2\})$.
\end{theorem}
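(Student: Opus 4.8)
The plan is to run the machine used for Theorems~\ref{c3c}--\ref{c6c}: I would produce an explicit representative of the self-$C_3$ concordance class of an arbitrary $2$-string link $\si$ as a product of the standard generators $T(I)$, and then read off the invariants from that representative by means of Lemma~\ref{milnor-inv}. Two features are new compared with the $C_k$-concordance proofs. First, concordance lets me delete every graph clasper with a loop (Lemma~\ref{graph}). Second, a self $C_3$-move is exactly surgery along a $C_3$-tree all of whose four leaves lie on a single component, i.e.\ a tree of o-index $(a,a,a,a)$; combined with Calculus of Claspers and Lemma~\ref{o-index} this trivializes surgery along any $C_l$-tree ($l\ge 3$) of o-index $(a,\dots,a)$, and, more importantly, lets me dispose of any tree carrying at least four leaves on one component. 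Because $n=2$, the pigeonhole principle forces every o-index of length $\ge 7$ (and several of length $6$) to have four repetitions of one symbol, so the construction terminates quickly; I expect the group of $2$-string links modulo self-$C_3$ concordance to be isomorphic to $\modZ^3\oplus(\modZ/2)^5$.

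For the ``only if'' direction I would check invariance of each quantity separately. The indices of $\mu(12)$, $\mu(2112)$, $\mu(121212)$ have $r\le 3$, so these are self-$C_3$-equivalence invariants by \cite[Thm.~3.1]{FY} and concordance invariants by \cite{casson}; hence self-$C_3$ concordance invariants. Each $\mathrm{Arf}_i$ equals $a_2(\hat{\si_i})\bmod 2$, a degree-$2$ (hence $C_3$-move invariant) finite type invariant that is also a knot concordance invariant, so a self $C_3$-move (which modifies at most one component, by a $C_3$-move) and concordance both preserve it. The one delicate quantity is $\mu(jiiiij)\bmod 2$: its index has $r=4$, so \cite{FY} does not apply and I must show directly that a self $C_3$-move changes $\mu(jiiiij)$ only by an even integer, which I expect to extract from the palindrome analysis underlying Lemma~\ref{milnor-inv}(3) and Remark~\ref{hop} (the case $(j,i,i,i,i,j)$).

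For the ``if'' direction I would specialize the degree-by-degree construction of Section~5 to $n=2$, using the two new moves at each stage, to obtain that $\si$ is self-$C_3$ concordant to a product of the generators $T(12)$, $T(iii)$, $T(212)=T(121)$, $T(2112)$, $T(jiiij)$ and $T(121212)$. Lemma~\ref{milnor-inv} then gives the leading Milnor invariants: $\mu_{T(12)}(12)=\pm1$; the generator $T(iii)$ is $2$-torsion (Lemmas~\ref{o-index}(3), \ref{graph}) and is detected by $\mathrm{Arf}_i$; the element $T(212)$ is $2$-torsion with $\mu_{T(212)}(2112)=\pm1$, while $\mu_{T(2112)}(2112)=\pm2$, so the single integer $\mu(2112)$ pins down the $T(212)$-exponent modulo $2$ and the $T(2112)$-exponent at once; $T(jiiij)$ is $2$-torsion with first nonzero invariant $\mu_{T(jiiij)}(jiiiij)=\pm1$, detected by $\mu(jiiiij)\bmod 2$; and $T(121212)$ has infinite order with $\mu_{T(121212)}(121212)=\pm12$ up to lower corrections, detected by $\mu(121212)$. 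Every other $2$-component o-index is eliminated because it has equal end-adjacent entries (Lemma~\ref{o-index}(2)), becomes a loop (Lemma~\ref{graph}), carries $\ge 4$ leaves on one component (self $C_3$-moves), or would need three distinct integers. Assembling the resulting coefficient matrix shows the listed invariants determine all exponents, and $4$-additivity (as in Section~5) extends this from the nilpotent pieces to all of $\si$.

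The main obstacle is the single $r=4$ borderline index $jiiiij$, which appears on both sides of the argument. On the invariance side I must prove the ``even change'' statement above; on the construction side I must prove that the degree-$5$ generator $T(jiiiij)$ (o-index $(j,i,i,i,i,j)$, with four leaves on component $i$) is self-$C_3$ concordant to $\1$, so that no integer-valued $\mu(jiiiij)$ survives. These are complementary: by Lemma~\ref{milnor-inv}(2) one has $\mu_{T(jiiiij)}(jiiiij)=\pm2$, an even change, whereas the degree-$4$ generator $T(jiiij)$ realizes the odd change $\pm1$ and is $2$-torsion. I would make the triviality of $T(jiiiij)$ precise by a clasper computation (in the spirit of \cite[Rem.~5.3]{akira} and \cite{FY}) that peels a self $C_3$-move off the four $i$-leaves and reduces $T(jiiiij)$, modulo self-$C_3$ concordance, to a power of the $2$-torsion element $T(jiiij)$. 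Verifying this peeling, and checking that every $2$-component $C_l$-tree with at least four leaves on one component is handled identically (which is what makes the construction terminate), is the technical heart of the proof.
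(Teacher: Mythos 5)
Your plan is essentially the paper's proof: the paper also produces the normal form $T(12)^x\cdot T(111)^{\varepsilon_1}\cdot T(222)^{\varepsilon_2}\cdot T(212)^{\varepsilon_3}\cdot T(2112)^{y}\cdot T(21112)^{\varepsilon_4}\cdot T(12221)^{\varepsilon_5}\cdot T(121212)^{z}$ (so the group $\modZ^3\oplus(\modZ/2)^5$ you predict), reads the exponents off the same invariants via Lemma \ref{milnor-inv} and Theorem \ref{c6c}, and isolates $\mu(jiiiij)\bmod 2$ as the one invariance that must be checked by hand. The only structural difference is that the paper does not rebuild the representative from scratch: it starts from the $C_6$-concordance representative of Section 5 and invokes \cite[Lem.~1.2]{FY} both to know that $C_6$-concordance implies self-$C_3$ concordance for $2$-string links (your pigeonhole termination) and to delete the generators whose o-index repeats a symbol four times. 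Note that this deletion is \emph{not} a consequence of Calculus of Claspers and Lemma \ref{o-index}, as you assert early on: Lemma \ref{o-index}(2) only handles trees whose ends are adjacent to a leaf of the same colour, and says nothing about, e.g., o-index $(j,i,i,i,i,j)$. You later flag the ``peeling'' as the technical heart, which is right; the correct tool is the Fleming--Yasuhara lemma, not the clasper calculus already in the paper.

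The one genuine gap is in your treatment of the invariance of $\mu(jiiiij)\bmod 2$ under a self $C_3$-move. Remark \ref{hop} only says this residue is unchanged under $C_6$-equivalence (and hence, with concordance, under $C_5$-concordance); a self $C_3$-move is a degree-$3$ operation, so the ``palindrome analysis'' cannot conclude by itself, and your observation that $\mu_{T(jiiiij)}(jiiiij)=\pm2$ concerns the construction side, not the invariance side. The missing idea, which is the paper's actual argument, is a reduction of the self $C_3$-move to the insertion of a \emph{local knot}: if $\sigma'$ is obtained from $\sigma$ by surgery along a $C_3$-tree with index $\{i^{(4)}\}$, then $\sigma'\stackrel{C_4}{\sim}\sigma\cdot(\1_2)_{t'}$ with $t'$ contained in a neighbourhood of the $i$th strand, and by \cite[Lem.~2.1]{akira} together with Lemma \ref{o-index}(2) this can be upgraded to $\sigma'\stackrel{C_5+c}{\sim}\sigma\cdot(\1_2)_{t'}$; since $(\1_2)_{t'}$ is a local knot its contribution to $\mu(jiiiij)$ vanishes, and Remark \ref{hop} then gives the congruence. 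Without this reduction the ``even change'' claim is unproved, and the ``only if'' direction of the theorem is incomplete.
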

\begin{remark}
In \cite[Remark]{yasuhara}, the second author asked if the Hopf link with both components Whitehead doubled is self-$C_3$ equivalent to the trivial $2$-string link.  The theorem above gives an affirmative answer.
\end{remark}
\begin{proof}
By \cite[Lem. 1.2]{FY}, $C_6$-concordance implies self-$C_3$ concordance for $2$-string links.  Starting with a representative of the $C_6$-concordance class of a $2$-string link $\sigma$, as given by (\ref{pan}), we can eliminate the generators $T(I)$ such that $I$ contains at least 3 times the same index to obtain a self-$C_3$ concordance representative. We obtain that $\sigma$ is self-$C_3$ concordant to 
\[T(12)^x\cdot T(111)^{\varepsilon_1}\cdot
T(222)^{\varepsilon_2}\cdot T(212)^{\varepsilon_3}\cdot 
T(2112)^{y}\cdot T(21112)^{\varepsilon_4}
T(12221)^{\varepsilon_5}\cdot T(121212)^{z},\]
for some integers $x,y,z$ and for some $\varepsilon_i\in\{0,1\}~(1\le i\le 5)$.  
By Theorem~\ref{c6c}, $x,y,z, \varepsilon_i~(1\le i\le 5)$ are determined by the invariants $\mathrm{Arf}_i~(i=1,2)$, $\mu(12)$, $\mu(2112)$, $\mu(121212)$, $\mu(211112)$ mod $2$, and $\mu(122221)$ mod $2$.
Since all these invariants except for mod-$2$ $\mu(jiiiij)$ ($\{i,j\}=\{1,2\}$) are self-$C_3$ concordance invariants, there only remains to show that mod-$2$ $\mu(jiiiij)$ is a self-$C_3$ equivalence invariant. 

Suppose that $\sigma'$ is a string link obtained from $\sigma\in \mathcal{SL}(2)$ by surgery along a $C_3$-tree with index $\{1^{(4)}\}$. It is enough to show that $\mu_{\sigma}(jiiiij)\equiv \mu_{\sigma'}(jiiiij)$ mod 2.  
By Calculus of Clasper, 
\[\sigma'\stackrel{C_4}{\sim}\sigma\cdot(\1_2)_{t'},\]
where $t'$ is a $C_3$-tree with index $\{1^{(4)}\}$ and is in a tubular neighbourhood of the $1$st strand of $\1_2$.
By \cite[Lem.~2.1]{akira}, we may assume that the $C_4$-equivalence above is realized by surgery along a disjoint union of $C_4$-trees with indices $\{1^{(5)}\}$ or $\{1^{(4)},2\}$.
So by Lemma \ref{o-index}~(2), we have
\[\sigma'\stackrel{C_5+c}{\sim} \sigma\cdot(\1_2)_{t'}.\]
Hence by Remark \ref{hop} we have 
$$\mu_{\sigma'}(jiiiij)\equiv\mu_{\sigma}(jiiiij)+\mu_{(\1_2)_{t'}}(jiiiij)\equiv \mu_{\sigma}(jiiiij)~\text{mod}~2.$$
This completes the proof.  
\end{proof} 
\section{$C_{n+1}$-moves for $n$-component Brunnian string links} \label{BSL}
An $n$-string link is Brunnian if every proper substring link of it is trivial.  
In this section, we use tools developped in the present paper to classify Brunnian $n$-string links up to $C_{n+1}$ equivalence, thus improving a previous result of the authors \cite{MY}.   

Let $B$ be a Brunnian $n$-string link.  
An explicit formula for a representative $B_0$ of the $C_n$-equivalence class of $B$ was given in \cite{HM} (see also \cite[Prop. 4.2]{MY}), and can be formulated as follows (using the notation of section 5): 
\begin{equation} \label{b0}
 B_0:=\prod_{\eta\in S_{n-2}} T(n-1,\eta(1),...,\eta(n-2),n)^{\mu_B(n-1,\eta(1),...,\eta(n-2),n)}.  
\end{equation}

Recall from Subsection \ref{calculus} that for an integer $k$ in $\n$, $\mathcal{B}_n(k)$ denotes the set 
of all bijections $\tau$ from $\{ 1,...,n-1 \}$ to $\n \setminus \{k \}$ such that $\tau(1)<\tau(n-1)$, 
and that $B_{\alpha}(k)$, resp. $\overline{B_{\alpha}}(k)$, is the $n$-string link obtained from $\1_n$ 
by surgery along the $C_n$-tree $T_{\alpha}(l)$, resp.  $\overline{T_{\alpha}}(l)$ represented 
in Figure \ref{flipping_fig}.  
For $\tau\in \mathcal{B}_n(k)$, set $\mu_{\tau}(B) := \mu_B(\tau(1),...,\tau(n-1),k,k)$.
Is was proved in \cite[Prop. 4.5]{MY} that 
\[B \stackrel{C_{n+1}}{\sim}  B_0\cdot B_{(1)}\cdot ... \cdot B_{(n)},\]
where, for each $k~(1\le k\le n)$, $B_{(k)}$ is the Brunnian $n$-string link
\begin{equation}\label{equBn}
 \prod_{\tau\in \mathcal{B}_n(k)} (B_{\tau}(k))^{n_{\tau}(k)}\cdot (\overline{B_{\tau}}(k))^{n'_{\tau}(k)}, 
\end{equation}
such that, for any $\tau\in \mathcal{B}_n(k)~(1\le k\le n)$, the exponents $n_{\tau}(k)$ and $n'_{\tau}(k)$ are two 
integers satisfying 
\[  n_{\tau}(k)+n'_{\tau}(k)=\mu_{\tau}(B_{(1)}\cdot ... \cdot B_{(n)})=\mu_{\tau}(B)-\mu_{\tau}(B_0). \]
This uses the fact that, for any $k\in \n$ and  $\alpha,\tau \in \mathcal{B}_4(k)$, we have 
 $$\mu_\alpha (B_{\tau}(k))=\mu_{\alpha} (\ov{B_{\tau}}(1))=\delta_{\alpha,\tau}.  $$

Given an $n$-string link $\si$ and $\tau\in \mathcal{B}_n(1)$, we can construct a knot $K_\tau(\si)$ in $S^3$ as follows. 
Connect the upper endpoints of the first and the ${\tau(1)}$th components of $\si$ by an arc $a_1$ 
in $S^3\setminus (D^2\times I)$.  
Next, connect the lower endpoints of the ${\tau(1)}$th and the ${\tau(2)}$th components 
by an arc $a_2$ in $S^3\setminus (D^2\times I)$ disjoint from $a_1$, then the upper endpoints of the ${\tau(2)}$th and ${\tau(3)}$th components by an arc $a_3$ in $S^3\setminus (D^2\times I)$ disjoint from 
$a_1\cup a_2$.  Repeat this construction until reaching the ${\tau(n-1)}$th component, and connect its lower or upper endpoint (depending on the parity of $n$) to the lower enpoint of the first component by an arc $a_n$ in $S^3\setminus (D^2\times I)$ 
disjoint from $\bigcup_{1\le i\le n-1} a_i$.  The arcs are chosen so that, if $a_i$ and $a_j$ ($i<j$) meet in the 
diagram of $L$, then $a_i$ overpasses $a_j$.  
It follows from the construction of $K_\tau(\si)$ and \cite{horiuchi} that for any $\tau\in \mathcal{B}_n(1)$, we have $\mathbf{p}:=P^{(n)}_0(K_\tau(B_\tau(1)))$ is nonzero (note that $\mathbf{p}$ depends only on $n$).  
Set 
\[ f_\tau(\si):= P^{(n)}_0(K_\tau(\si)) / \mathbf{p}. \]

By the proof of Lemma \ref{flipping}, 
we note that for any $k\geq 2$ there is a bijection 
$\phi_k:\mathcal{B}_n(1)\longrightarrow\mathcal{B}_n(k)$ such that  
for any $\tau\in\mathcal{B}_n(1)$, we have  
${B}_{\tau}(1)\cdot\overline{{B}_{\tau}}(1)^{-1}
\stackrel{C_{n+1}}{\sim}{B}_{\phi_k(\tau)}(k)\cdot\overline{{B}_{\phi_k(\tau)}}(k)^{-1}$.

We can now prove the following stronger version of \cite[Prop. 4.5]{MY}.  
\begin{theorem}
Let $B$ be a Brunnian $n$-string link.  Then 
$$ B\stackrel{C_{n+1}}{\sim} B_0\cdot B', $$
where $B_0$ is determined by the Milnor invariants of $B$ of length $n$ as in (\ref{b0}), 
and where $B'$ is given by 
\[ \prod_{\tau\in \mathcal{B}_n(1)} 
\left((B_{\tau}(1))^{m_{\tau}} \cdot 
(\overline{B_{\tau}}(1))^{\mu_{\tau}(B)-\mu_{\tau}(B_0)-m_{\tau}}\right)\cdot 
\prod_{k=2}^n\prod_{\tau\in \mathcal{B}_n(k)} ({B_{\tau}}(k))^{\mu_{\tau}(B)-\mu_{\tau}(B_0)}, 
\] 
where $m_{\tau}={f_{\tau}(B)-f_{\tau}(B_0)-
\sum_{k\geq 2}(\mu_{\phi_k(\tau)}(B)-\mu_{\phi_k(\tau)}(B_0))}$ ($\tau\in \mathcal{B}_n(1)$).  
\end{theorem}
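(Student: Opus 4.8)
The plan is to upgrade the representative of \cite[Prop. 4.5]{MY}, namely
\[ B \stackrel{C_{n+1}}{\sim} B_0\cdot B_{(1)}\cdots B_{(n)} \]
with each $B_{(k)}$ of the form (\ref{equBn}), by pinning down the individual exponents $n_\tau(k)$ and $n'_\tau(k)$ that Milnor invariants leave undetermined (they fix only the sum $n_\tau(k)+n'_\tau(k)=\mu_\tau(B)-\mu_\tau(B_0)$). All computations take place modulo $C_{n+1}$, where the relevant $C_n$-trivial generators commute (the group $\mathcal{SL}_n(n)/C_{n+1}$ is abelian by \cite[Thm. 5.4]{H}), so we may reorder factors freely. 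The first step is to remove every barred generator $\overline{B_\tau}(k)$ with $k\ge 2$. Writing $s_\tau:=\mu_\tau(B)-\mu_\tau(B_0)$, the abelian-group identity $x^ay^b=x^{a+b}(xy^{-1})^{-b}$ gives
\[ (B_\tau(k))^{n_\tau(k)}(\overline{B_\tau}(k))^{n'_\tau(k)}\stackrel{C_{n+1}}{\sim}(B_\tau(k))^{s_\tau}\cdot\big(B_\tau(k)\,\overline{B_\tau}(k)^{-1}\big)^{-n'_\tau(k)}. \]
Using the bijection $\phi_k\colon\mathcal{B}_n(1)\to\mathcal{B}_n(k)$ and the relation $B_{\phi_k(\tau)}(k)\,\overline{B_{\phi_k(\tau)}}(k)^{-1}\stackrel{C_{n+1}}{\sim}B_\tau(1)\,\overline{B_\tau}(1)^{-1}$, I transport each ``difference'' factor to the block $k=1$. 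After collecting terms, the blocks $k\ge 2$ reduce exactly to $\prod_{k=2}^n\prod_\tau(B_\tau(k))^{s_\tau}$, while the block $k=1$ becomes $\prod_\tau(B_\tau(1))^{m_\tau}(\overline{B_\tau}(1))^{s_\tau-m_\tau}$ for integers $m_\tau$ which, at this point, are only implicitly given through the (non-canonical) exponents of \cite{MY}; note the exponent sum $s_\tau$ is automatically preserved, as it must be since it is read off by $\mu_\tau$.

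It remains to identify $m_\tau$ through honest invariants. I would first record that $f_\tau$ is additive on any product with at least one $C_n$-trivial factor: a Calculus of Claspers argument in the spirit of Claim \ref{claimadditivity} gives $K_\tau(\sigma\cdot\sigma')\stackrel{C_{n+1}}{\sim}K_\tau(\sigma)\sharp K_\tau(\sigma')$; a $C_n$-trivial string link has $C_n$-trivial closure, so by Theorem \ref{cnknots} the lower derivatives $P_0^{(j)}$ ($1\le j<n$) vanish on it, and the Leibniz rule applied to the multiplicativity of $P_0$ under connected sum (with $P_0(\,\cdot\,;1)=1$) collapses the product to $P_0^{(n)}(K_\tau(\sigma))+P_0^{(n)}(K_\tau(\sigma'))$. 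Hence $f_\tau$ restricts to a homomorphism on the $C_n$-trivial generators. The crucial step is then to compute $f_\tau$ on these generators. I expect $f_\tau(B_\sigma(1))=\delta_{\tau,\sigma}$ and $f_\tau(B_{\phi_k(\tau)}(k))=1$ for $k\ge 2$, with $f_\tau$ vanishing on every other unbarred generator: these are exactly the configurations in which the closure $K_\tau$ carries the surgery tree onto a linear $C_n$-tree for the unknot whose $P_0^{(n)}$ equals $\mathbf{p}\neq 0$ by \cite{horiuchi}. I also expect $f_\tau(\overline{B_\sigma}(k))=0$ for all $\sigma$ and $k$, the corresponding closure being $C_{n+1}$-trivial; it is precisely this asymmetry between $B_\tau(1)$ and $\overline{B_\tau}(1)$ that Milnor invariants cannot see.

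Granting these values, applying the additive $C_{n+1}$-invariant $f_\tau$ to the representative from the first step yields
\[ f_\tau(B)=f_\tau(B_0)+m_\tau+\sum_{k\ge 2}\big(\mu_{\phi_k(\tau)}(B)-\mu_{\phi_k(\tau)}(B_0)\big), \]
whence $m_\tau=f_\tau(B)-f_\tau(B_0)-\sum_{k\ge 2}(\mu_{\phi_k(\tau)}(B)-\mu_{\phi_k(\tau)}(B_0))$, the announced formula; together with the fact that the remaining exponents equal $\mu_\tau(B)-\mu_\tau(B_0)$ this finishes the proof. The main obstacle is the generator computation of the previous paragraph: one must track, bare-handed, how the prescribed connecting arcs of $K_\tau$ (with their over/under conventions) interact with each surgery tree, showing that $K_\tau$ closes $B_\sigma(1)$ and $B_\eta(k)$ into $C_{n+1}$-trivial knots unless $\sigma=\tau$, resp. $\eta=\phi_k(\tau)$, and always closes the barred generators trivially, so that every case reduces to the single nonvanishing knot computation of \cite{horiuchi}.
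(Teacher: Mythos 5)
Your proposal is correct and follows essentially the same route as the paper: start from the representative of \cite[Prop. 4.5]{MY}, use Lemma \ref{flipping} to push all barred generators into the $k=1$ block (the paper phrases this as ``we may assume $n'_\tau(k)=0$ for $k\neq 1$''), and then pin down $m_\tau$ by evaluating the additive invariant $f_\tau$ on the generators, with exactly the values $f_\tau(B_\eta(1))=\delta_{\tau,\eta}$, $f_\tau(\overline{B_\eta}(1))=0$ and $f_\tau(B_{\phi_k(\eta)}(k))=\delta_{\tau,\eta}$ that the paper records from the construction of $K_\tau$ and \cite{horiuchi}. The generator computation you flag as the main obstacle is likewise only asserted (not carried out) in the paper, and your extra claim that $f_\tau$ kills barred generators for all $k$ (rather than just $k=1$) is unused since those have already been eliminated.
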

\begin{proof}
By Lemma \ref{flipping}, we may assume that $n'_{\tau}(k)=0$ in (\ref{equBn}) 
for any $\tau\in \mathcal{B}_n(k)$ with $k\ne 1$. 
Hence the product $B_{(1)}\cdot ... \cdot B_{(n)}$ is given by  
\[ \prod_{\tau\in \mathcal{B}_n(1)} \left((B_{\tau}(1))^{n_{\tau}(1)}\cdot 
(\overline{B_{\tau}}(1))^{n'_{\tau}(1)}\right)\cdot
\prod_{k=2}^n\prod_{\tau\in \mathcal{B}_n(k)} (B_{\tau}(k))^{\mu_{\tau}(B)-\mu_{\tau}(B_0)}.\]

Let $\tau\in \mathcal{B}_n(1)$.  It follows from the construction of $K_\tau(\si)$ and \cite{horiuchi} that for 
any $\eta\in\mathcal{B}_n(1)$, we have $f_\tau(B_{\eta}(1))= \delta_{\tau,\eta}$ and 
$f_\tau(\overline{B_{\eta}}(1))= 0$, 
and that $f_\tau(B_{\phi_k(\eta)}(k))= \delta_{\tau,\eta}$ for each $k\geq 2$.  
%

By using a similar argument as Claim \ref{claimadditivity} and the multiplicativity of 
the HOMFLYPT polynomial, we thus have that for each $\tau\in \mathcal{B}_n(1)$ 
\begin{displaymath}
\begin{array}{rcl}f_{\tau}(B) &=& f_{\tau}(B_0)+f_{\tau}(B_{(1)}\cdots B_{(n)})\\
&=&
\displaystyle f_{\tau}(B_0)+n_{\tau}(1)
+\sum_{k\geq 2}(\mu_{\phi_k(\tau)}(B)-\mu_{\phi_k(\tau)}(B_0)).  
\end{array}\end{displaymath}
Since
$\mu_{\tau}(B)=\mu_{\tau}(B_0)+n_{\tau}(1)+n'_{\tau}(1)$,
this completes the proof.  
\end{proof}

%
\end{document}